\documentclass[11pt]{article}
\usepackage{latexsym}
\usepackage{amssymb}
\usepackage{graphicx}
\usepackage[T1]{fontenc}
\usepackage[utf8]{inputenc}
\usepackage{enumerate}
\usepackage{amssymb}
\usepackage{amsmath}
\usepackage{color}
\usepackage{tikz}
\usepackage{hyperref}
\hypersetup{hypertex=true,
	colorlinks=true,
	linkcolor=blue,
	anchorcolor=blue,
	citecolor=blue}
\usepackage{amsthm}
\usetikzlibrary{patterns,calc,intersections}
\usepackage{cleveref}
\crefformat{section}{\S#2#1#3} 
\crefformat{subsection}{\S#2#1#3} 
\crefformat{subsubsection}{\S#2#1#3}
\newtheorem{thm}{Theorem}[section]
\newtheorem{lem}[thm]{Lemma}
\newtheorem{cor}[thm]{Corollary}
\newtheorem{prop}[thm]{Proposition}
\newtheorem{rem}[thm]{Remark}

\newtheorem{defn}[thm]{Definition}

\newcommand{\be}{\begin{equation}}
\newcommand{\ee}{\end{equation}}
\numberwithin{equation}{section}
\def\p{{\partial}}

\def\N{\mathcal{N}}

\def\Hess{{\mathrm{Hess}}}
\def\a{{\alpha}}
\def\supp{\mathrm{supp}}
\def\cc{{\mathcal{C}}}

\def\dvol{\mathrm{dvol}}

\def\mR{{\mathcal{R}}}
\def\mS{{\mathcal{S}}}
\def\tH{{\tilde{H}}}

\newcommand{\RNum}[1]{\uppercase\expandafter{\romannumeral #1\relax}}

\def\l{\lambda}
\def\b{\beta}
\def\R{\mathbb{R}}

\def\Lip{{\mathrm{Lip}}}

\def\half{\frac{1}{2}}
\def\n{\frac{n}{2}}

\def\ep{\epsilon}

\newcommand{\ba}{\begin{aligned}}
	\newcommand{\ea}{\end{aligned}}

\def\Dom{\mathrm{Dom}}

\def\tvl{{\tilde{V}^\l}}
\begin{document}
	\def\o{\omega}

	\def\O{\Omega}

	\title{ The classical Weyl law for Schrödinger operators on complete Riemannian manifolds}
	\author{Maxim Braverman\footnote{Department of Mathematics, Northeastern University, Boston, MA, 02215, m.braverman@northeastern.edu} 
    \and Xianzhe Dai\footnote{Department of Mathematics, UCSB, Santa Barbara, CA 93106, dai@math.ucsb.edu. } 
    \and Junrong Yan\footnote{Department of Mathematics, Northeastern University, Boston, MA, 02215, j.yan@northeastern.edu}
	}
	\date{}                                           
	\maketitle
	\abstract{ Building on and extending the analytical framework established in \cite{WeylDY}, we establish a criterion for the validity of the classical (non-semiclassical) Weyl law for
Schrödinger operators
$
H=\Delta+V
$
on complete Riemannian manifolds. 
In contrast to existing results, our approach does not rely on standard geometric assumptions such as bounded geometry, nor on analytic assumptions such as the doubling condition on the potential.  
Instead, we identify a geometric–analytic invariant that encodes the precise balance
between the geometry of the manifold, the growth of $V$, and the oscillation scale of~$V$.
This intrinsic quantity, denoted 
$c_{\delta}(\l)$ admits effective quantitative estimates. We prove that the Weyl asymptotic
\eqref{classical Weyl asymptotic} holds provided
$
\lim_{\lambda\to\infty} c_\delta(\lambda)=0 .
$
The sharpness of this criterion is demonstrated through 
explicit examples showing that the Weyl law can fail when the criterion is 
violated.
}

		\section{Introduction}

\subsection{Overview}
The spectral asymptotics of Schr\"odinger operators
$
H=\Delta+V
$
on $n$ dimensional complete Riemannian manifolds $(M,g)$ (c.f.   \eqref{classical Weyl asymptotic}) remain an active subject of study. A fundamental problem, going back to Weyl \cite{weyl1911asymptotische}, is to determine when the eigenvalue
counting function  $\N(\l)$ of $H$ satisfies the classical asymptotic
\be\label{classical Weyl asymptotic}
\N(\lambda) \sim (2\pi)^{-n} \int_{T^*M} \mathbf{1}_{\{|\xi|_g^2 + V(x) \le \lambda\}}\, \dvol_{T^*M}(x,\xi)= (2 \pi)^{-n} \omega_{n}\int_M(\l-V(x))_+^{\n} \dvol_M(x),
\ee
where $\omega_n$ is the volume of unit Euclidean ball in $\R^n$.

In compact settings this formula is well known, but on noncompact manifolds the
interaction between the geometry, the growth of~$V$, and its oscillations at
infinity creates a subtle picture, with many unresolved aspects; see \Cref{nontrivial} 
for illustrative examples.

Many existing Weyl laws on noncompact manifolds require strong geometric
assumptions, e.g. geometry at infinity admits an explicit
asymptotic structure, such as Euclidean or hyperbolic space,
asymptotically flat manifolds, cylindrical ends, or asymptotically hyperbolic
geometries \cite{de1950asymptotic, rozenbljum1974asymptotics, feigin1976asymptotic, fleckinger1981estimate,tachizawa1992eigenvalue,levendorskiui1996spectral,inahama2004eigenvalue,inahama2004eigenvalues,moroianu2008weyl, bonthonneau2015weyl, coriasco2021weyl, MP-WEYL, chitour2024weyl}. A recent theorem of Dai-Yan~\cite{WeylDY} establishes
\eqref{classical Weyl asymptotic} for general complete manifolds
with bounded geometry under a natural \emph{doubling condition} on the
potential $V$, namely, for some constant $C>1$,
\be\label{doubling condition0}
\text{the measure of } \{ V < 2\l \}
\le
C\cdot\,\text{the measure of } \{ V < \l \},
\ee
together with very mild regularity assumptions.
Within the doubling setting, these hypotheses are essentially optimal.
  Indeed, most existing Weyl-type results rely heavily on the doubling condition \eqref{doubling condition0} (or stronger ones), such as those in \cite{rozenbljum1974asymptotics,feigin1976asymptotic,fleckinger1981estimate,tachizawa1992eigenvalue,inahama2004eigenvalue,inahama2004eigenvalues,WeylDY}.

However, the doubling condition \eqref{doubling condition0} still rule out many geometrically significant
examples. Most notably, manifolds admitting potentials with very slow
growth that nevertheless satisfy Weyl’s law
\eqref{classical Weyl asymptotic} are excluded, such as the square of the
distance function on hyperbolic space, or $\ln(|x|)$ on $\R^n$, for which
the doubling condition \eqref{doubling condition0} fails
dramatically.
  Even when the full 
spectrum is discrete; no general principle has been available that explains
precisely when the classical Weyl law should remain valid.

This raises the natural question: is there a clean characterization of when the classical Weyl asymptotic \eqref{classical Weyl asymptotic} holds? The present paper is motivated by this question. Building on and extending the techniques established in \cite{WeylDY}, we develop a framework that goes beyond the doubling condition and identifies a geometric-analytic invariant that ensures the validity of Weyl's law.

\subsubsection{A new geometric-analytic invariant and its physical interpretation}

The first contribution of this paper is the identification of an accessible
geometric-analytic invariant
$
c_\delta(\lambda)
$ (see \eqref{c delta}),
which governs the validity of \eqref{classical Weyl asymptotic}.  
At the spectral scale~$\lambda$, this quantity captures the interaction between

\begin{itemize}
    \item the geometry of the manifold,
    \item the growth scale $a(\lambda)$ of $V$, and 
    \item the oscillation scale $b_\delta(\lambda)$ of $V$ on the region where $V(x)\approx\lambda$.
\end{itemize}
The condition $\lim_{\lambda\to\infty} c_\delta(\lambda)=0$ expresses the balance between these three ingredients, and our main theorem shows that this balance is exactly what ensures the Weyl law \eqref{classical Weyl asymptotic}.  
This yields a  nearly optimal criterion for Schr\"odinger operators on complete manifolds.

From a physics perspective, the invariant $c_\delta(\lambda)$ admits a clear
interpretation.
For the Weyl asymptotics to hold, local oscillations of the potential should not
dominate the large-scale geometric structure; otherwise, effects analogous to
quantum tunneling or diffraction may become strong enough to invalidate the
classical phase-space picture.
Traditional formulations of the Weyl law therefore impose global regularity
assumptions (such as compactness, doubling conditions, or large-scale uniformity)
precisely to rule out such phenomena.
The condition $c_\delta(\lambda)\to 0$ allows the manifold $(M,g)$ to exhibit
substantial geometric irregularities, provided that the combined growth and
oscillation of the potential $V$ compensates for them.
In this sense, a mild potential $V$ can ``tame'' a wild underlying geometry at high
energies, restoring classical behavior and ensuring the validity of the Weyl law.
Conversely, sufficiently mild geometry can also ``tame'' a wild potential.

\subsubsection{Nontriviality and sharpness of our criterion}\label{nontrivial}

The following examples show that deriving the Weyl asymptotic 
\eqref{classical Weyl asymptotic} without a doubling condition on the potential is  a delicate matter and demonstrate the sharpness of our criterion.  
Moreover, in contrast to the semiclassical Weyl asymptotics for semiclassical Schr\"odinger operator $H_{\hbar}:=\hbar^2\Delta+V$, which remain valid 
for the potentials considered below and can be obtained via localization 
methods~\cite{braverman2025semi}, the classical Weyl law is substantially more 
delicate.  These phenomena highlight the necessity of the new tools developed
here and in~\cite{WeylDY}.

On $\R$, let $c>0$ be a sufficiently small constant and $V$ be a smooth function satisfying
\[
V(x) =c\cdot (\ln |x|)^a \qquad (|x|\gg 1,\ a>0),
\quad\text{and}\quad
H^{\R} = -\p_x^2 + V(x).
\]
Let $S^{1}$ be the unit circle, $\Delta^{S^{1}}$ its Laplacian, and consider the
product operator on $\R \times S^{1}$:
\[
H := H^{\R} + \Delta^{S^{1}}.
\]
In \Cref{hyperbolic example}, we show that when $a \in (0,1]$, the potential $V$
does not satisfy our criterion.
This failure stems from the fact that the injectivity radius of $\R \times S^{1}$
is uniformly bounded above.
Moreover, we verify that
\[
\int_{S^{1}}\!\int_{\R} \big(\lambda - V(x)\big)_{+}\, dx\, d\theta
= o\big(\N(\lambda)\big),
\qquad \lambda \to \infty,
\]
and consequently the classical Weyl law fails for $H$. When $a \in (1,\infty)$, we have $\lim_{\l \to \infty} c_\delta(\l) = 0$ (see \cref{examples satisfy assumptions}), so our
theorem applies and the Weyl asymptotic \eqref{classical Weyl asymptotic} holds.

A second, more geometric example is as follows. Consider
\[
V(x)=r(x)^{a}, \qquad r(x)=d(x,0),
\]
on the real hyperbolic space. None of the existing approaches applies in this setting.\footnote{Although
\cite{inahama2004eigenvalue,inahama2004eigenvalues} establish Weyl-type asymptotics
on hyperbolic spaces under an assumption of the form
\[
\int_{\mathbb{H}^n} (\lambda - V(x))_{+}^{\n}\, dx \sim c\, \lambda^{\gamma},
\qquad \lambda \to \infty,
\]
this assumption does not apply to potentials such as $V(x) = r(x)^{\alpha}$.}
In this model, $c_\delta(\lambda)\to0$ holds precisely when $a>1$ (see \cref{examples satisfy assumptions}).  
We prove that the classical Weyl law holds exactly in this regime, whereas for 
$0<a<1$ not only does $\lim_{\l\to\infty}c_\delta(\lambda)$ fail to vanish, but the Weyl law 
itself breaks down (see \Cref{hyperbolic example}).  

		\subsection{Notations and the main result}\label{notation and main result}
Throughout this paper, the Riemannian manifold $(M,g)$ is complete
and  $$n=\dim(M).$$
		Given such a manifold $(M, g)$, let $\Delta$ denote the Laplace-Beltrami operator. (Our sign convention for the Laplace operator is the one that makes $\Delta$ a nonnegative operator.) The corresponding Schrödinger operator $H$ on $(M^n, g)$ takes the form $\Delta + V$, where $V \in C(M)$\footnote{While our methods can also be formulated for potentials with weaker regularity,
as in \cite{WeylDY} where an integral oscillation condition is used,
we focus here on the case of continuous potentials to keep the exposition
simple.

} is the potential function.
In this paper, we assume
\be\label{cond-V-2}
\lim_{d(x, x_0) \to \infty} V(x) = \infty,
\ee
where $d$ is the distance function induced by $g$ and $x_0$ is some fixed point. It is well known that under these conditions, the operator $H$ (which is $\Delta + V$) is essentially self-adjoint (cf. \cite{rofe1970conditions,oleinik1994connection}). Moreover, the spectrum of $H$ is discrete, and each eigenvalue has finite multiplicity.

Consider
\be\label{defn-Phi}
\Phi(\l):=(2 \pi)^{-n} \omega_{n}\int_{M}\big(\l-V(x)\big)_+^\n\,\dvol_M(x).
\ee
Without loss of generality, we assume $$V\geq1\quad\text{and}\quad\Phi(2)\neq 0.$$
\textbf{Quantitative description of the growth of $V$.}
We first introduce two quantities, $a(\l)$ and $d_\delta(\l)$, to describe
the growth of $V$. The quantity $d_\delta(\l)$ also depends on $\delta>0$; later on, we will let $\delta \to 0$.

Let
\[
\Omega_\l:=\{x\in M:\,V(x)< \l\},
\qquad
\sigma(\l):=|\Omega_\l|,
\]
where $|\Omega_\l|$ denotes the measure of $\Omega_\l$ associated with the Riemannian metric $g$.
Since $\sigma$ is increasing and left continuous on $(0,\infty)$, we may, consider
$
a\in C([3,\infty))
$
by 
\be\label{defn of a}
a(\l):=\sup\{s\in[0,\infty):2\sigma\big(\l-s\big)\geq \sigma\big(\l+s\big)\}.
\ee
For each $\delta\in(0,1)$, define $d_{\delta}(\l)$ by
\[d_{\delta}(\l):=\sup\Big\{s\in(0,\infty):\l^\n\sigma\big(s\big) \leq\delta \int_{M}\big(\l-V(x)\big)_+^\n\,\dvol_M(x)\Big\}.
\]
Then
\be\label{spectral on d delta is small}\l^\n\sigma\big(d_\delta(\l)\big) \leq \delta\int_{M}\big(\l-V(x)\big)_+^\n\,\dvol_M(x).
\ee

\def\osc{{\mathrm{osc}}}
\def\da{{a}}

\begin{prop}\label{d delta is smaller than lambda}
The following holds.
\begin{itemize}

\item We have
\be\label{spectrum between l-a(l) an l+a(l)}
\big(a(\l)\big)^\n
\bigl|
\Omega_{\l+\delta a(\l)} \setminus \Omega_{\l-\delta a(\l)}
\bigr|
\le
2\int_{\Omega_{\l-a(\l)}} (\l - V(x))_+^\n \, \dvol_M(x).
\ee
\item
$d_\delta(\l) < \l$.
\end{itemize}
\end{prop}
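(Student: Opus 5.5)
The proposition has two independent parts, and I would prove each directly from the definitions of $a(\l)$ and $d_\delta(\l)$. Throughout I use $V\ge 1$, $\delta\in(0,1)$, $\l\ge 3$, and the fact that $\Phi(2)\neq0$ forces $\int_M(\l-V)_+^{\n}\,\dvol_M>0$ for every $\l\ge 2$.

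\textbf{First bullet.} If $a(\l)=0$ there is nothing to prove, so assume $a:=a(\l)>0$. Choose $s\in(\delta a,a]$ in the set $\{s:\,2\sigma(\l-s)\ge\sigma(\l+s)\}$; such $s$ exist arbitrarily close to $a$ by the definition of the supremum, and $s=a$ itself if the supremum is attained. Since $\sigma$ is monotone,
\[
\bigl|\Omega_{\l+\delta a}\setminus\Omega_{\l-\delta a}\bigr|\le\sigma(\l+\delta a)\le\sigma(\l+s)\le 2\sigma(\l-s).
\]
On $\Omega_{\l-s}$ we have $\l-V>s$, hence $(\l-V)_+^{\n}>s^{\n}$. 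Therefore
\[
s^{\n}\sigma(\l-s)\le\int_{\Omega_{\l-s}}(\l-V)_+^{\n}\,\dvol_M .
\]
Combining the two displays gives the claimed inequality with $s$ in place of $a$.

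It then remains to let $s\uparrow a$. The factor $s^{\n}$ converges to $a^{\n}$. The integrals over $\Omega_{\l-s}$ decrease to the integral over $\{V\le\l-a\}$, which differs from $\Omega_{\l-a}$ only by the level set $\{V=\l-a\}$.

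This level-set issue is the step I expect to be the main obstacle. Left continuity of $\sigma$ does not directly handle limits of $\sigma(\l-s)$ as $s\uparrow a$, since $\l-s$ then approaches $\l-a$ from above. I would first try to show that $a$ itself belongs to the defining set, by combining the continuity of $a(\cdot)$ asserted in the text with the monotonicity of $\sigma$; in that case no limit is needed. Failing that, I would estimate the contribution of the shell $\{\l-s>V\ge\l-a\}$, on which $(\l-V)^{\n}\le s^{\n}$, and absorb it.

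\textbf{Second bullet.} Take any $s\ge\l$. By monotonicity $\sigma(s)\ge\sigma(\l)$, while
\[
\int_M(\l-V)_+^{\n}\,\dvol_M\le(\l-1)^{\n}\sigma(\l)<\l^{\n}\sigma(\l),
\]
using $V\ge1$. Since $\delta<1$ and the integral is positive, this yields
\[
\l^{\n}\sigma(s)\ge\l^{\n}\sigma(\l)>\delta\int_M(\l-V)_+^{\n}\,\dvol_M,
\]
so no $s\ge\l$ lies in the defining set and $d_\delta(\l)\le\l$.

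To get strict inequality, I would use left continuity of $\sigma$ at $\l$. It gives $\sigma(s)\to\sigma(\l)$ as $s\uparrow\l$. Hence the strict inequality $\l^{\n}\sigma(s)>\delta\int_M(\l-V)_+^{\n}\,\dvol_M$ persists for all $s\in(\l-\epsilon,\l]$ for some $\epsilon>0$. Consequently $d_\delta(\l)\le\l-\epsilon<\l$.
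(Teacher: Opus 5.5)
Your second bullet is correct. It differs only slightly from the paper. The paper uses that the supremum defining $d_\delta(\lambda)$ is attained, which holds by left continuity of $\sigma$ and is \eqref{spectral on d delta is small}, and derives a contradiction from $d_\delta(\lambda)\ge\lambda$. You instead use left continuity at $\lambda$ to exclude a whole interval $(\lambda-\epsilon,\lambda]$. Both are valid.

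The first bullet is where your proposal does not close, and neither proposed repair can work. For $s<a(\lambda)$ in the defining set, your chain of inequalities is exactly the paper's chain. The paper simply runs it at $s=a(\lambda)$, writing $|\Omega_{\lambda+a(\lambda)}|\le 2|\Omega_{\lambda-a(\lambda)}|$, so it tacitly assumes the supremum in \eqref{defn of a} is attained. That assumption fails in general. As $s\uparrow a(\lambda)$, left continuity gives $\sigma(\lambda+s)\to\sigma(\lambda+a(\lambda))$, but $\sigma(\lambda-s)\to|\{V\le\lambda-a(\lambda)\}|$, which is a right limit. So attainment fails exactly when $\{V=\lambda-a(\lambda)\}$ has positive measure; continuity of $\lambda\mapsto a(\lambda)$ says nothing about this.

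In that case the stated inequality can fail outright, so absorbing the shell cannot rescue it. Here is a counterexample on $M=\mathbb{R}$. Take $V$ even and continuous, nondecreasing on $[0,\infty)$, with $V(0)=1$ and $V\to\infty$, such that
\[
|\{V<50\}|=2,\quad |\{V=50\}|=20,\quad |\{75\le V<125\}|=20,\quad |\{50<V<75\}|+|\{125\le V<150\}|=2 .
\]
Then $\sigma(150)=44=2|\{V\le 50\}|$, while $2\sigma(50)=4$. So at $\lambda=100$ the defining set is exactly $[0,50)$ and $a(100)=50$ is not attained. With $\delta=\tfrac12$, the left side equals $20\sqrt{50}>140$, while the right side is at most $2\cdot 2\sqrt{99}<40$.

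Your limiting argument does correctly prove the inequality with $\Omega_{\lambda-a(\lambda)}$ replaced by $\{V\le\lambda-a(\lambda)\}$. This coincides with the stated form when $|\{V=\lambda-a(\lambda)\}|=0$; then the supremum is attained and the paper's one-line chain is valid. The weaker form is all that is used later, in \eqref{Weyl law Qj, j big} and \eqref{integral density on Qj, j large}. There the right-hand side is only bounded by a multiple of $\int_M(\lambda-V)_+^{n/2}\,\dvol_M$. So the right completion is to state and prove that weaker version, or to add the null-level-set hypothesis, rather than trying to force $a(\lambda)$ into the defining set.
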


\begin{proof}

First, we compute
\[\ba
&\quad\big(a(\l)\big)^\n
\big|
\Omega_{\l+\delta a(\l)} \setminus \Omega_{\l-\delta a(\l)}
\big|\leq \big(a(\l)\big)^\n
\big|\Omega_{\l+ a(\l)}\big|\leq 2\big(a(\l)\big)^\n\big|\Omega_{\l- a(\l)}\big|\\
&\leq 2\int_{\Omega_{\l-a(\l)}} (\l - V(x))_+^\n \, \dvol_M(x).
\ea
\]
If $d_\delta(\l) \geq \l$, 
\[
 \l^{\n} \sigma\!\big(d_\delta(\l)\big) \geq  \l^{\n} \sigma\!\big(\l\big)\geq \int_{M}\big(\l-V(x)\big)_+^\n\,\dvol_M(x)>\delta\int_{M}\big(\l-V(x)\big)_+^\n\,\dvol_M(x),
\]
which contradicts \eqref{spectral on d delta is small}.
\end{proof}

In this paper, for two positive functions $f,g$ on $\R_+$, 
the notation $f \lesssim g$ means that $\limsup_{\l\to\infty}\frac{f(\l)}{g(\l)}<\infty.$
We write $f \approx g$ if both $f \lesssim g$ and $g \lesssim f$ hold.
\begin{rem}[Motivation for introducing $a(\l)$ and $d_\delta(\l)$]
If  $a(\l) \approx \l$, then $V$ satisfies the doubling condition
\eqref{doubling condition0}.
In contrast, for $V(x) = \ln(|x|)$ with $|x| \ge 1$ on $\R$, which grows
slowly, one has $a(\l) \approx 1$.
Thus, $a(\l)$ measures how the growth type of $V$ deviates from the
doubling condition \eqref{doubling condition0}.

While $d_\delta(\l)$ is motivated by the
Dirichlet-Neumann (DN) bracketing method(see \cref{DN bracketing} for an explanation of this method). 
In \Cref{proof of thm 13}, apply the DN bracketing method,
we show that,
up to an error $\approx \delta \Phi(\l)$,
the contribution from  $\Omega_{d_\delta(\l)}$ can be ignored when estimate $\N(\l)$.
\end{rem}
\paragraph{Oscillation near
$\Omega_{\l+\delta a(\l)} \setminus \Omega_{d_\delta(\l)}$.} For any continuous function $f$ on $M$ and any closed set $U\subset M$, we define the \emph{oscillation} of $f$ on $U$ by
\[
\osc_U(f)\;:=\;\sup\big\{|f(x)-f(y)|:\; x,y\in U\big\}.
\]
By \Cref{d delta is smaller than lambda}, $\Omega_{\lambda+\delta a(\lambda)}\setminus\Omega_{d_\delta(\lambda)}\neq \emptyset,$ we then can set
\be\label{osc radius}
b_\delta(\lambda)
:=\sup\bigl\{\, r\in[0,\mathrm{inj}_x):\ 
\osc_{B_r(x)}(V)\leq\delta^2 a(\lambda),\ 
x\in \Omega_{\lambda+\delta a(\lambda)}\setminus\Omega_{d_\delta(\lambda)}
\bigr\}.
\ee
Here $\mathrm{inj}_x$ denotes the injectivity radius at $x$, and for any $r>0$,
\[
B_r(x):=\{\,y\in M:\ d(y,x)\leq r\,\}.
\]
The first example in \Cref{nontrivial} highlights the natural appearance of the 
injectivity radius in the definition of $b_\delta(\lambda)$ (see also \Cref{rmk main result} and \Cref{compare R and R times S1}).


Let $R_\delta(\lambda)$, $S_\delta(\lambda)$, and $T_\delta(\lambda)$ be
nonnegative quantities defined as the suprema of the norms of the curvature
operator, as well as its first and second covariant derivative, 
respectively, over the $b_\delta(\lambda)$-neighborhood of
\[
\Omega_{\lambda+\delta a(\lambda)} \setminus \Omega_{d_\delta(\lambda)}.
\]
Here, for $r>0$, by the \emph{$r$-neighborhood} of a set $A\subset M$ we mean the open set
$
\bigcup_{x\in A} \mathring{B}_r(x).
$

Let
\[
K_\delta(\l)
:=
R_\delta(\l)
+
S_\delta(\l)^{\frac{2}{3}}
+
T_\delta(\l)^{\frac{1}{2}}.
\]
The exponents are chosen so that $K_\delta(\l)$ is homogeneous under
 metric rescaling.

Let
\be\label{c delta}
c_\delta(\lambda)
:=
\frac{
1
+ K_\delta(\lambda)\, b_\delta(\lambda)^2
}{
a(\lambda)\, b_\delta(\lambda)^2
}.
\ee
We assume for all sufficiently small $\delta>0$,
\begin{equation}\label{assumption}
\lim_{\lambda\to\infty}c_\delta(\l)=0.
\end{equation}

\begin{rem}
   Here $a(\l)$ describes the growth rate of $V$, while $b_\delta^{-1}(\l)$ measures the oscillation scale of $V$. The condition $\lim_{\lambda\to\infty} c_\delta(\lambda)=0$ describes a precise balance between the geometry of the manifold, the growth rate of the potential, and its oscillatory behavior.

     
   
\end{rem}

Our main result is:

\begin{thm}\label{main}
    If \eqref{assumption} holds for all sufficiently small $\delta > 0$, then the classical Weyl law \eqref{classical Weyl asymptotic} holds for the Schr\"odinger operator $\Delta + V$.
\end{thm}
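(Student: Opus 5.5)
We will prove the two inequalities
\[
\limsup_{\l\to\infty}\frac{\N(\l)}{\Phi(\l)}\le 1,
\qquad
\liminf_{\l\to\infty}\frac{\N(\l)}{\Phi(\l)}\ge 1,
\]
separately, each up to errors $O(\delta)$, and then let $\delta\to0$. The tool is Dirichlet--Neumann bracketing adapted to the level sets of $V$.

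\textbf{Decomposition.} Fix $\delta$ and $\l$ large, and write $b=b_\delta(\l)$. We cover the shell
\[
A_\l:=\Omega_{\l+\delta a(\l)}\setminus\Omega_{d_\delta(\l)}
\]
by a maximal $\delta b$-separated family of centers $x_i$, chosen so that each ball $B_{b}(x_i)$ lies inside the injectivity radius. On these balls $\osc(V)\le\delta^2 a(\l)$, and the metric is controlled by $R_\delta,S_\delta,T_\delta$. From these balls we form a partition of unity $\{\chi_i\}$, or alternatively a partition into Voronoi-type cells. The rest of $M$ splits into two pieces:
\begin{itemize}
\item the inner region $\Omega_{d_\delta(\l)}$, which by \eqref{spectral on d delta is small} has volume at most $\delta\l^{-\n}\int(\l-V)_+^{\n}$;
\item the outer region $\{V\ge \l+\delta a(\l)\}$, on which $H-\l\ge\delta a(\l)$.
\end{itemize}

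\textbf{Upper bound.} We use an IMS localization with cutoffs at scale $b$. The localization error is
\[
\sum_i|\nabla\chi_i|^2\lesssim b^{-2},
\]
and this is $o(a(\l))$ precisely because $c_\delta(\l)\to0$. On the outer region the operator then has no spectrum below $\l$, since
\[
\delta a(\l)-Cb^{-2}>0 .
\]
On each ball $B_b(x_i)$ we replace $V$ by the constant $v_i=\inf_{B_b(x_i)}V$, losing at most $\delta^2a(\l)$. We count the Neumann eigenvalues below $\l+o(a)$ of $\Delta$ on the geodesic ball by comparing the metric with the Euclidean one in normal coordinates. The curvature terms $R,S,T$ enter the $C^2$-closeness of the metric to Euclidean on scale $b$. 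The quantity $Kb^2\cdot(1/ab^2)$ is exactly what makes the metric perturbation of the Rayleigh quotient $o(a(\l))$. Euclidean Weyl asymptotics for balls with uniform remainder, in a rescaled form valid when $(\l-v_i)b^2\to\infty$ or handled via Li--Yau/Cwikel-type bounds when it is small, then give a local count of
\[
(2\pi)^{-n}\omega_n(\l+O(\delta a)-v_i)_+^{\n}\,|B_i|\,(1+o(1)).
\]
On the inner region we use a crude Cwikel--Lieb--Rozenblum type bound. Since $d_\delta(\l)<\l$ (\Cref{d delta is smaller than lambda}), it contributes at most $C\l^{\n}\sigma(d_\delta(\l))\le C\delta\Phi(\l)$. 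This region needs no geometric control beyond what is used in the bracketing, possibly via a Dirichlet boundary placed on $\partial\Omega_{d_\delta}$ inside the controlled shell. Summing over the balls gives
\[
\N(\l)\le \Phi\bigl(\l+C\delta a(\l)\bigr)+C\delta\Phi(\l)+o(\Phi(\l)).
\]

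\textbf{Lower bound.} Symmetrically, we use Dirichlet bracketing on disjoint balls $B_{\delta b}(x_i)$ packed in $\Omega_{\l-\delta a}\setminus\Omega_{d_\delta}$. With $v_i=\sup V$ on each ball, this gives
\[
\N(\l)\ge\Phi\bigl(\l-C\delta a(\l)\bigr)-C\delta\Phi(\l)-o(\Phi(\l)).
\]
The volume lost to non-packed regions, with fractions depending only on $n$ after rescaling, is handled by letting the packing be fine relative to $b$ while the oscillation scale is fixed. Concretely, we use cells of a partition rather than balls, with diameters $\le b$, via a bi-Lipschitz comparison.

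\textbf{Closing step, and the main obstacle.} We must show that
\[
\Phi\bigl(\l\pm C\delta a(\l)\bigr)=\Phi(\l)\bigl(1+O(\delta)\bigr).
\]
For the upper bound, the difference is bounded by
\[
C\,(\delta a)^{\n}\,|\Omega_{\l+\delta a}\setminus\Omega_{\l-\delta a}|+\bigl[(\l-V+C\delta a)^{\n}-(\l-V)^{\n}\bigr]\ \text{on }\Omega_{\l-\delta a}.
\]
The first term is $\le C\delta^{\n}\Phi$ by \eqref{spectrum between l-a(l) an l+a(l)}. The second is $O(\delta)\Phi$, using $(\l-V)\ge\delta a$ there together with a comparison through the doubling at scale $a$ defined by \eqref{defn of a}.

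The main obstacle will be the local eigenvalue counting on geodesic balls with uniform remainders. This is where the precise combination $K_\delta b^2$ and $1/(ab^2)$ in $c_\delta$ must appear. It requires quantitative control of the Neumann/Dirichlet counting function on small curved balls, uniform over all centers, in the regime where $(\l-v_i)b^2$ may not be large. Here I would follow the analytic framework of \cite{WeylDY}, rescaling each ball to unit size so that the curvature bounds become $Kb^2$ and the energy window becomes $ab^2\to\infty$.
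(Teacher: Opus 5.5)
\textbf{There are genuine gaps.} The overall architecture is close to the paper's: localize to the shell, discard $\Omega_{d_\delta(\lambda)}$ and $\{V\ge\lambda+\delta a(\lambda)\}$, and control the thin shell via \eqref{spectrum between l-a(l) an l+a(l)}. Your closing comparison of $\Phi(\lambda\pm C\delta a)$ with $\Phi(\lambda)$ is fine.

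\textbf{The inner region.} You claim a crude CLR-type bound gives $\mathcal{N}\lesssim\lambda^{n/2}\sigma(d_\delta(\lambda))$ on $\Omega_{d_\delta(\lambda)}$ and that this region ``needs no geometric control''. That is false. On a Riemannian manifold, a bound $\mathcal{N}(\lambda)\le C\lambda^{n/2}|U|$ needs a Sobolev or on-diagonal heat kernel bound on $U$ at scale $\lambda^{-1/2}$: curvature $\lesssim\lambda$, injectivity radius $\gtrsim\lambda^{-1/2}$, and boundary regularity if you use Neumann conditions. For example, on the flat cylinder $S^1_\epsilon\times(0,L)$ with Dirichlet ends, the circle-constant modes alone give $\mathcal{N}(\lambda)\ge L\sqrt{\lambda}/\pi-1$, while $\lambda|U|=2\pi\epsilon L\lambda$; the ratio blows up when $\epsilon\sqrt{\lambda}\to0$. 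The quantity $c_\delta(\lambda)$ only constrains the $b_\delta(\lambda)$-neighborhood of the shell, not the interior of $\Omega_{d_\delta(\lambda)}$.

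The missing idea is \Cref{tameness}. Apply the hypothesis at the level $\mu=V(x)$ for each point $x$, and use $a(\mu)<\mu$. This gives $|R_g|(x)\le C(V(x)+1)$ and $\mathrm{inj}_x\ge c(V(x)+1)^{-1/2}$, so after rescaling the metric by $\lambda$, all of $\Omega_\lambda$ has weakly bounded geometry. Together with the uniform interior cone condition the paper secures for $Q_0$ (by assembling it from whole Voronoi cells), this yields the Sobolev inequality and \eqref{heat kernel estimate on Q0}. Note also that a Dirichlet cut along $\partial\Omega_{d_\delta}$ goes the wrong way for an upper bound. You need Neumann conditions, where the regularity of the cut matters, or an IMS cutoff, which still needs the interior geometry.

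\textbf{The localization scale.} The condition $c_\delta\to0$ is equivalent to $a b_\delta^2\to\infty$ together with $K_\delta/a\to0$. It does not make $K_\delta b_\delta^2$ small or even bounded; for instance $b_\delta\approx1$, $K_\delta\approx a^{1/2}$ is allowed. On $b_\delta$-balls with $K_\delta b_\delta^2\gg1$, several things fail:
\begin{itemize}
\item normal coordinates are not uniformly comparable to Euclidean ones;
\item Bishop--Gromov gives no uniform overlap bound, so $\sum_i|\nabla\chi_i|^2\lesssim b^{-2}$ is not uniform;
\item Rayleigh-quotient comparison breaks down. A metric $(1+\eta)$-close to Euclidean perturbs the kinetic term multiplicatively, not by an additive $O(K)$, so you need $\eta\approx Kr^2$ small on the cells.
\end{itemize}
The right scale is $r_\delta=\min\{K_\delta^{-1/2},b_\delta\}$ from \eqref{r delta}. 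For it, $a r_\delta^2=\min\{ab_\delta^2,a/K_\delta\}\to\infty$, which is where both terms of $c_\delta$ enter; a Euclidean-comparison argument would then need a further subdivision at scale $\epsilon r_\delta$.

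\textbf{IMS overcounting.} An IMS partition built from $b$-balls around a $\delta b$-net has overlap multiplicity of order $\delta^{-n}$. Since IMS gives $\mathcal{N}(\lambda;H)\le\sum_i\mathcal{N}(\lambda;(H-W)_{B_i,D})$, the leading term is overcounted by that factor. You need either cutoffs whose overlaps occupy a vanishing volume fraction, or Neumann bracketing on a genuine tessellation with uniformly Lipschitz cells, as in \Cref{tesslation}.

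\textbf{The deferred local count.} The uniform local count you postpone is the core of the paper. It combines heat-kernel remainder estimates on cells (\Cref{heat remainder estimate}) with the quantitative KHL theorem applied pointwise to the shifted operators $H^\lambda_{Q_j}-C_j$. With the repairs above, your Rayleigh-quotient route could serve as an alternative to that machinery, but as written the proposal does not prove the theorem.
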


\begin{rem}\label{rmk main result}
It is straightforward to verify that potentials of the form
\[\ba
V(x)&=\ln\!\cdots\!\ln(|x|)\;,|x|\gg1\ \text{on }\mathbb{R}^{n},\qquad\\
V(x,\theta)&=c\cdot(\ln|x|)^a,\quad |x|\gg1,\ a>1,\ (x,\theta)\in \mathbb{R}\times S^1,\\
V(x)&=r(x)^{a}\;,a>1\ \text{on hyperbolic space},
\ea\]
where $r(x)$ denotes the hyperbolic distance from $x$ to $0$ and $c>0$ be  a sufficiently small constant, satisfy \eqref{assumption} (see \cref{examples satisfy assumptions}). None of these potentials satisfy the doubling condition \eqref{doubling condition0}, yet by our theorem, the Weyl law \eqref{classical Weyl asymptotic} still holds for the associated Schrödinger operators.

Conversely, consider the situation in which \eqref{assumption} fails. 
If $\lim_{\lambda\to\infty} a(\lambda)=\infty$ and $(M,g)$ has bounded geometry in the sense of \Cref{defn bounded geometry}, then the failure of \eqref{assumption} only requires $b_\delta(\lambda)$ to be very small, which corresponds to strong oscillation of $V$; see  \cite[\S 6]{rozenbljum1974asymptotics} for examples in which both \eqref{assumption} and the Weyl law \eqref{classical Weyl asymptotic} fail. 
If $\lim_{\lambda\to\infty} a(\lambda)\neq\infty$, then for \eqref{assumption} to hold, strong geometric restrictions must be imposed on $(M,g)$: the curvature must be small, and either the injectivity radius must be large or the oscillation near infinity not too severe.
In the first example of \cref{nontrivial}, $a(\lambda)\to 0$, but the injectivity radius is bounded above due to the $S^1$-factor, so $b_\delta(\lambda)$ cannot be large and \eqref{assumption} fails. In fact, for $V(x,\theta)=c\cdot(\ln|x|)^a$, $(x,\theta)\in\mathbb{R}\times S^1$, and $|x|\gg 1$, we show in \cref{hyperbolic example} that when $a\in(0,1]$, \eqref{assumption} fails and the Weyl law fails.
 When the injectivity radius is infinite, only  the oscillation scale plays a role; in \Cref{hyperbolic example} we show that the potentials
\[
V(x)=r(x)^{a},\qquad 0<a< 1,
\]
on hyperbolic space do not satisfy \eqref{assumption}, and  the classical Weyl law fails. 

Finally, in \cref{example in DY}, we revisit the results of \cite{WeylDY} within the present framework.\end{rem}
\subsection{Organization}
In \Cref{karamata}, we introduce the main ideas and ingredients in the proof of
our main results.
In \Cref{local bounded}, we introduce the notion of local bounded geometry
and develop the associated analytic tools, including heat kernel estimates,
heat kernel expansions, and a quantitative Weyl law, etc.
In \Cref{proof main}, we prove the main result by a rescaling argument,
reducing the problem to the local bounded geometry setting studied in
\Cref{local bounded}.
In \Cref{hyperbolic example}, we present several examples illustrating the
sharpness of our results.
Finally, in \Cref{examples satisfy assumptions}, we discuss examples to which
our main theorem applies, in particular cases where the doubling condition
fails dramatically; we also revisit \cite{WeylDY}, where a doubling
assumption is imposed.

\section{Main ideas and ingredients of the proof}
\label{karamata}
The main theorem proved in this paper relies on very weak assumptions.
As a consequence, the proof is necessarily technical and consists of several
new ideas and key ingredients.
In this section, we briefly list the main ones. Moreover, we believe that these ingredients are of independent interest and
worthy of further study, not only in the context of Weyl's law.

These include pointwise eigenvalue counting functions
(\cref{pointwise eigenvalue}),
Dirichlet-Neumann (DN) bracketing methods
(\cref{DN bracketing}),
local bounded geometry analysis
(\cref{idea of local bounded geometry}),
and a quantitative version of Karamata-Hardy-Littlewood (KHL) Tauberian theorem
(\cref{quantatitative KHL}). In a nutshell, we apply our quantitative KHL Tauberian theorem
to the pointwise eigenvalue counting function together with an explicit model function. The DN bracketing method allows us to localize the problem and to exploit the
oscillation of the potential function. The local bounded geometry analysis,
together with an appropriate rescaling, guarantees that the assumptions of the
quantitative KHL Tauberian theorem are satisfied if $c_\delta(\l)\to 0$.

\subsection{Pointwise eigenvalue counting function}\label{pointwise eigenvalue}

Without the doubling condition, $e^{-tH}$ may not be trace class, and so the heat trace may not exist. However, heat kernel always exists pointwisely and we can study the so-called pointwise eigenvalue counting function as follows.

Let $L$ be a self-adjoint (not necessarily positive) elliptic operator with
discrete spectrum $\{\lambda_j\}_{j=1}^\infty$ on a domain $\Omega$, equipped with either Neumann or Dirichlet boundary conditions, and such that
$\lambda_j \to +\infty$ as $j\to\infty$. Let $\{\phi_j\}_{j=1}^\infty$ be an orthonormal basis of $L^2(\Omega)$
    consisting of eigenfunctions of $L$, $L\phi_j=\l_j\phi_j$.
    The pointwise eigenvalue counting function of $L$ is
    \be\label{pointwise counting for L}
        e_L(\l,x,y)
        := \sum_{\lambda_j< \lambda} \phi_j(x)\phi_j(y),
    \ee
Let $K_L(t,x,y)$ denotes the heat kernel of $L,$ then 
\[K_{L}(t,x,x)=\int_{-\infty}^\infty e^{-tr}d^r e_L(r,x,x),\]
where $d^r$ means differential is take w.r.t. $r$-component.

The following simple statement will play an essential role in the proof.

\begin{lem}\label{lem:spectral-bound-from-heat-kernel}

    For every $x\in \Omega$ and every $\lambda>0$ we have
\begin{equation}\label{eq:pointwise-counting-vs-heat}
        e_L(\l,x,x)\;\leq\; e\, K_L(\lambda^{-1},x,x).
    \end{equation}
\end{lem}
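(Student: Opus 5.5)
The plan is the standard Chernoff-type comparison between the spectral function and the heat kernel on the diagonal. Both are built from the same nonnegative terms, so we can compare them termwise.

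First, write the heat kernel in the eigenbasis,
\[
K_L(t,x,x)=\sum_{j} e^{-t\lambda_j}\,\phi_j(x)^2 .
\]
This series converges for $t>0$ by the assumed discreteness, with $\lambda_j\to+\infty$, and the existence of the heat kernel. Every term is nonnegative because $\phi_j(x)^2\ge 0$, even though some $\lambda_j$ may be negative.

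Second, fix $\lambda>0$ and set $t=\lambda^{-1}$. We discard all terms with $\lambda_j\ge\lambda$, which is allowed since they are nonnegative. For the remaining indices, $\lambda_j<\lambda$ gives $t\lambda_j<1$, hence $e^{-t\lambda_j}> e^{-1}$. This holds trivially when $\lambda_j\le 0$, since then $e^{-t\lambda_j}\ge 1>e^{-1}$. Therefore
\[
K_L(\lambda^{-1},x,x)\;\ge\;\sum_{\lambda_j<\lambda} e^{-\lambda_j/\lambda}\phi_j(x)^2\;\ge\; e^{-1}\sum_{\lambda_j<\lambda}\phi_j(x)^2 \;=\;e^{-1}e_L(\lambda,x,x),
\]
and multiplying by $e$ gives \eqref{eq:pointwise-counting-vs-heat}.

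Equivalently, in Stieltjes form: $e_L(\cdot,x,x)$ is nondecreasing, so $d^r e_L\ge0$ and
\[
\int_{-\infty}^{\infty} e^{-r/\lambda}\,d^r e_L(r,x,x)\;\ge\;\int_{(-\infty,\lambda)} e^{-r/\lambda}\,d^r e_L(r,x,x)\;\ge\; e^{-1}\, e_L(\lambda,x,x).
\]

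The only point needing care is justifying the pointwise eigenfunction expansion of $K_L(t,x,x)$, namely pointwise convergence and the fact that the diagonal value equals the series. I would handle this by noting that $e^{-tL}$ has a continuous kernel, by elliptic regularity and the semigroup property. Writing $K_L(t,x,x)=\|K_L(t/2,x,\cdot)\|_{L^2}^2$ and expanding $K_L(t/2,x,\cdot)$ in the orthonormal basis $\{\phi_j\}$ via Parseval yields exactly $\sum_j e^{-t\lambda_j}\phi_j(x)^2$. This series has nonnegative terms, so dropping terms is legitimate.
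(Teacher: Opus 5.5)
Your proposal is correct and is essentially the paper's argument. The paper writes the same comparison in Stieltjes form, $e_L(\lambda,x,x)=\int_{-\infty}^{\lambda}d^re_L\le e\int_{-\infty}^{\lambda}e^{-r/\lambda}\,d^re_L\le e\,K_L(\lambda^{-1},x,x)$, using $e^{-r/\lambda}\ge e^{-1}$ for $r<\lambda$ and positivity of the measure. Your Parseval justification of the diagonal eigenfunction expansion fills in a step the paper takes for granted.
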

\begin{proof}
   This is because
   \[\ba
e_L(\l,x,x)&=\int_{-\infty}^\l d^r e_L(r,x,x)\leq e\int_{-\infty}^\l e^{-\l^{-1}r}d^r e_L(r,x,x)\\
&\leq e\int_{-\infty}^\infty e^{-\l^{-1}r}d^r e_L(r,x,x)=eK_L(\l^{-1},x,x).\ea
   \]
\end{proof}
\subsection{Dirichlet-Neumann bracketing type argument}\label{DN bracketing}
Assume that $M$ is partitioned into finitely or countably many domains
\be\label{partition}
M=\cup_{j} Q_j,
\qquad
\mathring Q_j\cap \mathring Q_k=\emptyset \quad (j\neq k).
\ee
Let $H_{Q_j,D}$ and $H_{Q_j,N}$ denote the restrictions of $H$ to $Q_j$
with Dirichlet and Neumann boundary conditions, respectively, and let
$\N(\lambda;H_{Q_j,D})$ and $\N(\lambda;H_{Q_j,N})$ be the corresponding eigenvalue
counting functions.
Dirichlet-Neumann (DN) bracketing asserts that
\[
\sum_j \N(\lambda;H_{Q_j,D})
\ \le\
\N(\lambda)
\ \le\
\sum_j \N(\lambda;H_{Q_j,N}).
\]
\paragraph{A key step.}
Using the DN bracketing argument, we find (see
\Cref{reduction to near V=lambda} for details; the argument is elementary in nature, but would be too technical to present at
this point) that, in estimating $\N(\l)$, the contributions of
$\Omega_{d_\delta(\l)}$ and $M \setminus \Omega_{\l-\delta a(\l)}$ to both sides of
the Weyl asymptotic \eqref{classical Weyl asymptotic} can be ignored
(up to an error of order $\approx \delta \Phi(\l)$).
It therefore suffices to focus our analysis on
\[
\Omega_{\l-\delta a(\l)} \setminus \Omega_{d_\delta(\l)}.
\]
Lastly, we emphasize that the local bounded geometry analysis and the
quantitative KHL Tauberian theorem introduced below resolve the essential
difficulties discussed in~\cite[$\S$1.5]{WeylDY} concerning the extension of
the DN bracketing method to general manifolds.

\subsection{Local bounded geometry analysis and the role of $c_\delta(\l)\to 0$}\label{idea of local bounded geometry}

As discussed in \cref{DN bracketing}, in estimating $\N(\l)$, it suffices to focus our analysis on
\[
\Omega_{\l-\delta a(\l)} \setminus \Omega_{d_\delta(\l)}.
\]
\textbf{At this point, the assumption $c_\delta(\l)\to 0$ (i.e. \eqref{assumption}) gets involved.}
This limit allows us, after a suitable rescaling of the metric
(see \cref{rescaling}), to ensure that the geometric data on the region
$\Omega_{\l-\delta a(\l)} \setminus \Omega_{d_\delta(\l)}$ are uniformly bounded.

We introduce in \Cref{local bounded} a notion of local bounded
geometry, and construct a partition of manifolds into domains with good
geometric and analytic properties, summarized in \Cref{nice properties}, which
is suitable for applying the DN bracketing method.

On each such domain, we derive Gaussian-type heat kernel estimates, remainder
estimates for the heat kernel expansion, and a quantitative version of the Weyl
law with uniform control of the constants involved.

\subsection{Quantitative KHL Tauberian theorem}\label{quantatitative KHL}
  In \cite{WeylDY}, a heat kernel approach to Weyl’s law for Schr\"odinger operators
on noncompact manifolds is developed via an extended
Karamata-Hardy-Littlewood (KHL) Tauberian theorem.
In a similar spirit, to derive \eqref{classical Weyl asymptotic} from heat kernel asymptotics without the doubling condition, we need the following  quantitative KHL theorem, whose proof is given in \Cref{Proof of KHL}.

\begin{thm}\label{KHL2}
	Let $\mu$, $\nu$ be positive increasing functions on $\R^+$ and let $ \l_1 > 0$. Suppose there exists a constant $L > 0$ such that for all $t \in(0,\l_1^{-1}]$,
\be\label{exponential bound}
\int e^{-t r} \, d\nu(r) \leq L \, \nu(t^{-1}),
\ee
and the following limit holds uniformly in $s \geq \l_1$:
\be\label{uniform limit of quotient}
\lim_{\tau \to 1} \frac{\nu(\tau s)}{\nu(s)} = 1.
\ee
Let $\b : [0, \infty) \to [0, \infty)$ be a non-decreasing function.
Then the estimate
\be\label{error estimate for laplace transform}
\Big| \int e^{-t r} \, d\mu(r) - \int e^{-t r} \, d\nu(r) \Big|
\leq \b(t) \,  \int e^{-t r} \, d\nu(r), \quad t \in (0, \l_1^{-1}],
\ee
implies that for any $\ep \in (0,1)$, there exist constants $ C_1 = C_1( \ep)>1$ such that 
\be\label{error for measure}
\left|\mu(s) - \nu(s)
\right|
\leq  \big(\ep+\ep L+C_1L\b(C_1 s^{-1})\big) \nu(s),\quad \forall s\geq C_1\l_1.
\ee

\end{thm}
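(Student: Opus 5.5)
We would prove \Cref{KHL2} by the standard Karamata device: approximate the indicator $\mathbf 1_{[0,1]}$ by polynomials in $e^{-x}$. The quantitative version then comes from tracking how the errors depend on $\beta$ and $L$.

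Fix $\ep$. By Weierstrass approximation we choose polynomials $p_\pm$ with $p_-(e^{-x})\le \mathbf 1_{[0,1]}(x)\le p_+(e^{-x})$ for $x\ge0$. We require
\[
\int_0^\infty \big(p_+(e^{-x})-p_-(e^{-x})\big)e^{-x}\,dx<\ep',
\]
and more precisely that $p_+-p_-$ is small in a weighted sense. Concretely, we take continuous $g_\pm$ equal to $\mathbf 1_{[0,1]}$ outside $[1-\eta,1+\eta]$, with $g_- \le \mathbf 1_{[0,1]}\le g_+$. We then approximate $g_\pm(x)e^{x}$, as a function of $e^{-x}\in[0,1]$, uniformly within $\eta$ by polynomials $q_\pm$, shifted so that the inequalities are preserved. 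This gives
\[
|p_\pm(e^{-x})-g_\pm(x)|\le \eta e^{-x}.
\]
The degrees, and the sizes of the coefficients $\sum_k|c_k^\pm|=:A_\pm$, depend only on $\ep$ through $\eta$. They also determine the largest exponent $N=N(\ep)$ occurring.

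Next, for $s\ge C_1\l_1$ we write $\int p_\pm(e^{-r/s})\,d\mu=\sum_k c_k^\pm\int e^{-(k+1)r/s}\,d\mu$ (absorbing one factor of $e^{-r/s}$; here $k+1\ge1$). Each term is controlled by \eqref{error estimate for laplace transform} with $t=(k+1)/s\le (N+1)/s\le \l_1^{-1}$, provided $C_1\ge N+1$. This gives
\[
\Big|\int p_\pm(e^{-r/s})\,d\mu-\int p_\pm(e^{-r/s})\,d\nu\Big|\le A_\pm\,\b\big((N+1)s^{-1}\big)\int e^{-r/s}d\nu\le A_\pm L\,\b(C_1s^{-1})\,\nu(s).
\]
Here we used that $\b$ is nondecreasing, that $\int e^{-(k+1)r/s}\,d\nu\le\int e^{-r/s}\,d\nu$, and \eqref{exponential bound} at $t=1/s$. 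Then $\mu(s)\le\int p_+(e^{-r/s})\,d\mu$ and $\mu(s)\ge \int p_-(e^{-r/s})\,d\mu$. Both sides of $\mu$ at $s$ have thus been reduced to $\nu$-integrals up to a $C_1L\b(C_1s^{-1})\nu(s)$ error, with $C_1:=\max(N+1,A_+,A_-,\dots)$.

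It remains to compare $\int p_\pm(e^{-r/s})\,d\nu$ with $\nu(s)$. We split $|p_\pm(e^{-r/s})-\mathbf 1_{[0,s]}(r)|\le \mathbf 1_{[(1-\eta)s,(1+\eta)s]}(r)+\eta e^{-r/s}$. The first piece contributes $\nu((1+\eta)s)-\nu((1-\eta)s)\le \ep\,\nu(s)$ once $\eta$ is small, using the uniform continuity \eqref{uniform limit of quotient} for $s\ge\l_1$ (the point $(1-\eta)s$ must also be $\ge\l_1$, which holds since $C_1>2$). The second piece is at most $\eta\int e^{-r/s}\,d\nu\le \eta L\nu(s)\le \ep L\nu(s)$. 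Collecting the terms yields \eqref{error for measure}.

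The main obstacle is the construction of the one-sided polynomial approximants with the weighted error $\eta e^{-x}$. Uniform errors alone are not enough, since $\int d\nu$ may be infinite. We must also keep the degree and coefficient bounds depending only on $\ep$, not on $\nu$, so that $C_1=C_1(\ep)$. We will handle this by approximating $e^{x}g_\pm(x)$ as a continuous function of $y=e^{-x}\in[0,1]$. This function is piecewise continuous, with a jump near $y=e^{-1}$ that has already been smoothed by the ramp on $[1-\eta,1+\eta]$. Continuity at $y=0$ holds because $g_\pm$ vanishes for $x>1+\eta$. We then adjust the approximant by $\pm\eta$ to enforce the one-sided inequalities.
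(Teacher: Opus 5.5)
Your proposal is correct and follows essentially the same route as the paper. The paper also chooses smoothed cutoffs $\eta_1\ge\mathbf{1}_{[0,1]}\ge\eta_2$, with the transition band fixed via \eqref{uniform limit of quotient}. It approximates $e^{x}\eta_j(x)$ one-sidedly and uniformly within $\epsilon$ by exponential sums $\sum_{l\ge1}a_{lj}e^{-lx}$, which are your polynomials in $y=e^{-x}$. It applies \eqref{error estimate for laplace transform} termwise, and bounds both $\int e^{-r/s}\,d\nu$ and the weighted error $\epsilon e^{-x}$ via \eqref{exponential bound}. The remaining differences are cosmetic: the paper uses the rescaled measures $\mu_t,\nu_t$, smooth rather than piecewise-linear cutoffs, and exponential sums without a constant term.
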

\def\doub{1}
\if\doub0
First, we explain the close relationship between
\eqref{exponential bound} and \eqref{uniform limit of quotient}
and the doubling condition.

    In fact, the doubling condition plays two distinct roles in \cite{WeylDY}.
First, it guarantees that
\[
\int_M e^{-tV(x)}\,\dvol_M(x)<\infty,\qquad \forall\, t>0.
\]
Second, and more importantly, it ensures the validity of
\eqref{exponential bound} and \eqref{uniform limit of quotient},
both of which are essential for deriving the extended KHL Tauberian theorem
in \cite{WeylDY} as well as quantitative KHL here.

More precisely, by imposing the doubling condition on $\nu$, namely that there
exists $\l_0$ such that for all $\l\ge \l_0$,
\[
\nu(2\l)\le C\,\nu(\l)
\]
for some constant $C>0$, it is shown in \cite{WeylDY} that
\eqref{exponential bound} holds whenever $\l\ge (1+\varepsilon_0)\l_0$.
More precisely, there exists $L=L(\varepsilon_0)$ such that
\eqref{exponential bound} is valid.
Moreover, the convolution of $\nu$ with $f_+^\a$ for $\a>0$ satisfies
\eqref{uniform limit of quotient} for any $r\ge (1+\varepsilon_0)\l_0$,
where $f_+^\a(r):=\max\{r^\a,0\}$.
Here the limit in \eqref{uniform limit of quotient} is uniform only in $r$,
and not uniform with respect to $\varepsilon_0$ or $\a$.
\fi

\def\toapply{1}
\if\toapply1
We will apply \Cref{KHL2} to the case where $\mu$ is the pointwise eigenvalue counting function and $\nu$ an explicit model function, see below. It is essential to verify assumptions 
\eqref{exponential bound} and \eqref{uniform limit of quotient}, which will be explained below.
The verification of \eqref{error estimate for laplace transform} follows from
the heat kernel expansion remainder estimate, which is more technical and will
be discussed in \cref{sec of heat expansion remainder}.
\paragraph{Verifying \eqref{exponential bound} and \eqref{uniform limit of quotient}.}This part is technical, but it helps the reader understand the constructions in
\Cref{sec of quantitative weyl law}.
The reader may skip this part on a first reading and return to it when reaching
\Cref{sec of quantitative weyl law}. Another reason for including this part here is that we have just stated
\Cref{KHL2}, and we take this opportunity to explain how its technical
assumptions \eqref{exponential bound} and \eqref{uniform limit of quotient} can be verified.

Unlike \cite{WeylDY}, where the KHL-type theorem is applied globally, here we
apply \Cref{KHL2} in a pointwise manner, in the following sense.

Let $K_H(t,x,y)$ denote the heat kernel of $H$, and let $e(\l,x,y)$ be the
pointwise eigenvalue counting function of $H$.
Formally, one expects the following pointwise asymptotic expansion:
\[
K_H(t,x,x)
=
\int_{\R_+} e^{-tr}\, d^r e(r,x,x)
\sim
(4\pi t)^{-\n} e^{-tV(x)}
=
(2\pi)^{-n}\omega_n
\int_{\R_+} e^{-tr}\, d^r\big(r-V(x)\big)_+^\n .
\]
Here $d^r$ denotes differentiation in the $r$-variable.

\begin{rem}\label{rem laplace transformation of local integral density}
For the last equality above, we have used the fact that for any constant $C>0$,
\be\label{laplace transformation of local integral density}
\int_{\R^+} e^{-tr} (r - C)_+^{\n-1} \, dr
=
\Gamma\Big(\n\Big)\, t^{-\n} e^{-tC},
\ee
and that $\omega_n=\frac{\pi^{n/2}}{\Gamma(\frac{n}{2}+1)}$.
\end{rem}

Set
\[
\mu_x(r) := e(r,x,x),
\qquad
\nu_x(r) := (2\pi)^{-n}\omega_n\big(r-V(x)\big)_+^\n .
\]
As explained in \Cref{DN bracketing}, it suffices to apply \Cref{KHL2} to
$\mu_x$ and $\nu_x$ for
\[
x \in \Omega_{\l-\delta a(\l)} \setminus \Omega_{d_\delta(\l)}.
\]
For this purpose, we need the following simple, though somewhat
technical, statement.

\begin{prop}\label{ob C}
Let $C>0$ and $\nu(r) := (r - C)_+^\a$ with $\a > 0$.
Then for any $\varepsilon_0 > 0$, if $\l_2 \ge (1+\varepsilon_0)C$, we have, for any
$t \in (0,\l_2^{-1}]$,
\be\label{obCeq1}
\int e^{-tr} \, d\nu(r) \le L \nu(t^{-1}),
\ee
where $L$ depends only on $\varepsilon_0$ and $\a$.
Moreover, for $r \ge \l_2$,
\be\label{obCeq2}
\lim_{\tau \to 1} \frac{\nu(\tau r)}{\nu(r)} = 1
\ee
holds uniformly.
\end{prop}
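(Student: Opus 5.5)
Both claims follow from explicit computation with the Laplace transform formula in \Cref{rem laplace transformation of local integral density}. The only quantity to control uniformly is the ratio $C/t^{-1}=tC$, which the hypothesis $t\le \l_2^{-1}\le \big((1+\varepsilon_0)C\big)^{-1}$ keeps at most $(1+\varepsilon_0)^{-1}<1$.

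\textbf{Proof of \eqref{obCeq1}.} Since $d\nu(r)=\a (r-C)_+^{\a-1}dr$, the same substitution as in \eqref{laplace transformation of local integral density} (with exponent $\a-1$ in place of $\n-1$) gives
\[
\int e^{-tr}\,d\nu(r)=\a\,\Gamma(\a)\,t^{-\a}e^{-tC}=\Gamma(\a+1)\,t^{-\a}e^{-tC}.
\]
On the other hand,
\[
\nu(t^{-1})=(t^{-1}-C)^\a=t^{-\a}(1-tC)^\a .
\]
Because $tC\le (1+\varepsilon_0)^{-1}$, we have $1-tC\ge \varepsilon_0/(1+\varepsilon_0)$. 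Bounding $e^{-tC}\le 1$, the ratio is at most
\[
L:=\Gamma(\a+1)\Big(\frac{1+\varepsilon_0}{\varepsilon_0}\Big)^{\a},
\]
which depends only on $\varepsilon_0$ and $\a$.

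\textbf{Proof of \eqref{obCeq2}.} For $r\ge\l_2$ and $\tau$ near $1$ (say $\tau>(1+\varepsilon_0)^{-1/2}$, so that $\tau r>C$ and both values are positive), write
\[
\frac{\nu(\tau r)}{\nu(r)}=\Big(\frac{\tau r-C}{r-C}\Big)^{\a}=\Big(1+\frac{(\tau-1)r}{r-C}\Big)^{\a}.
\]
Since $C\le r/(1+\varepsilon_0)$, we get $r-C\ge \varepsilon_0 r/(1+\varepsilon_0)$. Hence
\[
\Big|\frac{(\tau-1)r}{r-C}\Big|\le |\tau-1|\,\frac{1+\varepsilon_0}{\varepsilon_0},
\]
a bound independent of $r$. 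Therefore the ratio tends to $1$ as $\tau\to1$, uniformly in $r\ge\l_2$.

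There is no genuine obstacle. The only point requiring care is the uniformity: every estimate must depend on $r$ and $t$ only through $C/r$ and $tC$, and both are bounded away from $1$ by the margin $\varepsilon_0$. Without this margin (for instance if $\l_2$ were allowed to approach $C$), $\nu(t^{-1})$ could vanish and both statements would fail.
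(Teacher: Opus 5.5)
Your proof is correct and follows the paper's argument. You compute the Laplace transform as $\Gamma(\alpha+1)t^{-\alpha}e^{-tC}\le\Gamma(\alpha+1)t^{-\alpha}$, then bound $\nu(t^{-1})$ below by $(\varepsilon_0/(1+\varepsilon_0))^{\alpha}t^{-\alpha}$ using $tC\le(1+\varepsilon_0)^{-1}$. Your explicit uniform-limit argument via $r-C\ge\varepsilon_0 r/(1+\varepsilon_0)$ supplies the step the paper only describes as ``similar.''
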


\begin{proof}
There exists a constant $c$, depending only on $\a$
(see also \eqref{laplace transformation of local integral density}), such that
\begin{equation}\label{laplace transformation of local integral density 2}
\int_{\R_+} e^{-tr} \, d\nu(r)
=
c t^{-\a} e^{-tC}
\le
c t^{-\a}.
\end{equation}
For $t \le \l_2^{-1}$, we have
\be\label{verify exponential bound}
\nu(t^{-1})
=
\bigl(t^{-1} - C\bigr)_+^\a
\ge
\Bigl(t^{-1} - \frac{t^{-1}}{1+\varepsilon_0}\Bigr)^\a
=
\Big(\frac{\varepsilon_0}{1+\varepsilon_0}\Big)^\a t^{-\a}.
\ee
Combining \eqref{laplace transformation of local integral density 2}
and \eqref{verify exponential bound}, we obtain \eqref{obCeq1}.
The verification of \eqref{obCeq2} is similar.
\end{proof}

We now apply \Cref{KHL2} when $s=\l$  to estimate $\mu_x(\l)-\nu_x(\l)$.
This requires verifying \eqref{exponential bound} and
\eqref{uniform limit of quotient} for some $\l_1\leq C_1(L,\ep)^{-1}\l \ll \l$.
In view of \Cref{ob C}, this forces $V(x) < (1+\varepsilon_0)^{-1}\l_1 \ll \l$.
Consequently, \eqref{exponential bound} and \eqref{uniform limit of quotient}
can only be verified for $\nu_x$ when $V(x)\ll \l$, which does not cover the
region $\Omega_{\l-\delta a(\l)} \setminus \Omega_{d_\delta(\l)}$ we focus on.

To address this issue, we shift the operator by a suitable constant.
Specifically, we partition the $b_\delta(\l)$-neighborhood of
$\Omega_{\l-\delta a(\l)} \setminus \Omega_{d_\delta(\l)}$ into domains $Q_j$
with $\mathrm{diam}(Q_j)\le b_\delta(\l)$.
On each $Q_j$, set
\[
C_j := \inf_{x\in Q_j} V(x),
\qquad
V_j(x) := V(x) - C_j.
\]
Let $H_{Q_j}$ denote the restriction of $H$ to $Q_j$ with either Dirichlet or
Neumann boundary conditions, and consider
$
\tilde{H}_{Q_j} := H_{Q_j} - C_j,
$
that is, $\tilde{H}_{Q_j}$ is obtained from $H_{Q_j}$ by shifting it by a constant.
Instead of $\nu_x$ and $\mu_x$, we consider the shifted measures
\[
\nu_{j,x}(r) := (2\pi)^{-n}\omega_n\bigl(r - V_j(x)\bigr)_+^{\n},
\qquad
\mu_{j,x}(r) := e_{\tilde{H}_{Q_j}}(r,x,x).
\]
Observe that:
\begin{prop}\label{estimate on Qj}
We have:
\begin{itemize}
\item $0 \le V_j(x) \le \delta^2 a(\l)$ on $Q_j$.
\item $\l - C_j \ge \l - (\l - \delta a(\l)) = \delta a(\l)$.
\end{itemize}
\end{prop}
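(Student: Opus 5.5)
The plan is to read both bullets directly off the construction of the partition $\{Q_j\}$ and the definitions of $b_\delta(\l)$ and $C_j$. The construction presumably places each $Q_j$ inside a ball $B_r(x_j)$ with $x_j \in \Omega_{\l-\delta a(\l)}\setminus\Omega_{d_\delta(\l)} \subset \Omega_{\l+\delta a(\l)}\setminus\Omega_{d_\delta(\l)}$ and $r \le b_\delta(\l)$, or at least arranges $\mathrm{diam}(Q_j) \le b_\delta(\l)$ with $Q_j$ meeting that region. If needed, I would shrink the radius slightly below $b_\delta(\l)$, or use the closedness of the balls together with continuity of $V$, so that the defining property in \eqref{osc radius} applies.

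\emph{First bullet.} Nonnegativity $V_j \ge 0$ is immediate from $C_j = \inf_{Q_j} V$. For the upper bound, take $x \in Q_j$. By the oscillation property in \eqref{osc radius} for the ball containing $Q_j$,
\[
V(x) - C_j \le \sup_{y,z\in Q_j}|V(y)-V(z)| \le \osc_{B_r(x_j)}(V) \le \delta^2 a(\l).
\]
The one delicate point is that $b_\delta(\l)$ is defined as a supremum. I would read the definition as saying that every admissible center $x$ has the oscillation bound at radii up to $b_\delta(\l)$. If it is only a supremum, I would pass to radii $r < b_\delta(\l)$ and use continuity of $V$ to let $r \uparrow b_\delta(\l)$.

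\emph{Second bullet.} Here we need $C_j \le \l - \delta a(\l)$. Since $Q_j$ is constructed to intersect $\Omega_{\l-\delta a(\l)}$, some point $y \in Q_j$ has $V(y) < \l - \delta a(\l)$. Hence $C_j \le V(y) < \l - \delta a(\l)$, which gives $\l - C_j \ge \delta a(\l)$.

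The main obstacle is bookkeeping rather than analysis. The partition covers a $b_\delta$-neighborhood, so one must make sure that every $Q_j$ actually used meets $\Omega_{\l-\delta a(\l)}\setminus\Omega_{d_\delta(\l)}$; pieces that do not are discarded into the DN-bracketing error. One must also make sure each $Q_j$ lies in a ball of radius at most $b_\delta(\l)$ centered in $\Omega_{\l+\delta a(\l)}\setminus\Omega_{d_\delta(\l)}$. I would choose centers $x_j$ in that region, use a maximal $b_\delta(\l)/2$-separated net, and take Voronoi-type cells intersected with balls $B_{b_\delta(\l)/2}(x_j)$. This gives diameter at most $b_\delta(\l)$, places each cell inside a ball where the oscillation bound holds, and makes both bullets follow as above.
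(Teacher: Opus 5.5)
Your proposal is correct and matches the paper's argument: the first bullet comes from $\mathrm{diam}(Q_j)\le b_\delta(\lambda)$ together with the definition of $b_\delta(\lambda)$, and the second from $Q_j\cap\Omega_{\lambda-\delta a(\lambda)}\neq\emptyset$. Your extra bookkeeping, requiring each $Q_j$ to lie in a ball of radius at most $b_\delta(\lambda)$ centered in $\Omega_{\lambda+\delta a(\lambda)}\setminus\Omega_{d_\delta(\lambda)}$ and using continuity of $V$ at the supremum, just makes explicit what the paper's one-line use of the diameter bound tacitly assumes.
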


\begin{proof}
Since $\mathrm{diam}(Q_j) \leq b_\delta(\l)$ and by the definition of $b_\delta(\l)$,
we have $\osc_{Q_j} V \le \delta^2 a(\l)$, which yields the first estimate.
The second estimate easily follows from
$Q_j \cap \Omega_{\l-\delta a(\l)} \neq \emptyset$.
\end{proof}

Note $\nu_{j,x}(\l - C_j) = \nu_x(\l)$, we should apply \Cref{KHL2} to estimate
$\mu_{j,x}(\l - C_j) - \nu_{j,x}(\l - C_j)$. In view of \Cref{ob C} and \Cref{estimate on Qj},
the assumptions \eqref{exponential bound} and
\eqref{uniform limit of quotient} are verified for $\nu_{j,x}$ with $x \in Q_j$,
with
$
\l_1 := 2\delta^2 a(\l),
$
and the constant $L$ depending only on $n$.
Fixing $\ep>0$ and taking $\delta$ sufficiently small, we have
$
\l - C_j \ge 2 C_1(L,\ep)\delta^2 a(\l) = C_1(L,\ep)\l_1.
$
Therefore, we may apply \Cref{KHL2} to estimate
$\mu_{j,x}(\l - C_j) - \nu_{j,x}(\l - C_j)$,
provided that an estimate of the form
\eqref{error estimate for laplace transform} holds.

\subsection{Proof of \Cref{KHL2}}\label{Proof of KHL}

\def\letnu{1}
\if\letnu1

    In the proof, we identify the increasing functions $\mu,\nu$ with the
Stieltjes measures defined by them. The proof is a modification of that of the classical Karamata Tauberian
theorem( c.f. \cite[Theorem~71]{canzani2013analysis}).
 Consider the scaled measures on $\mathbb{R}^+$ defined by, for any set $A\subset\R^+$,
\[
\mu_t(A):=\mu\big(t^{-1}A\big),\qquad
\nu_t(A):=\nu\big(t^{-1}A\big).
\]
 Then for any $l\ge1$ and $\omega=\nu$ or $\mu$,
\begin{equation}\label{intemu}
\int e^{-lr}\,d\omega_t(r)=\int e^{-tlr}\,d\omega(r).
\end{equation}
Hence, by \eqref{intemu} and \eqref{error estimate for laplace transform},
we have for $lt\leq\l_1^{-1}$,
\begin{equation}\label{aes}
\Big|\int e^{-lr}\,d\mu_t(r)-\int e^{-lr}\,d\nu_t(r)\Big|
\le\beta(lt)\int e^{-lr}\,d\nu_t(r)
\le\beta(lt)\int e^{-r}\,d\nu_t(r).
\end{equation}
Consider the space
\[
\mathcal B:=\operatorname{span}
\big\{g_l:\R^+\to\R^+\mid g_l(r)=e^{-lr},\ l\in\{1,2,\dots\}\big\}.
\]
Before proceeding, we briefly explain the idea. Let $t=s^{-1}$. We would like to compare
$
\mu(t^{-1})=\mu_t(1) \text{ and } \nu(t^{-1})=\nu_t(1).
$
Thus the problem reduces to comparing the measures $\mu_t([0,1])$ and $\nu_t([0,1])$. Since the indicator function of $[0,1]$ is not
smooth, we approximate it by smooth bump functions (see $\eta_1$
and $\eta_2$ below). We then use the Stone--Weierstrass theorem to
approximate these bump functions by functions in the class
$\mathcal B$. The integrals of functions in $\mathcal B$ can be estimated
using \eqref{aes}. The conditions \eqref{uniform limit of quotient}
and \eqref{exponential bound} guarantee that the approximation procedure
is uniformly controlled.

Fix $\epsilon\in(0,1)$. By \eqref{uniform limit of quotient}, there exists
$\sigma\in(0,1)$ such that for any $\tau\in(1-\sigma,1+\sigma)$ and
$r\ge\l_1$,
\be\label{nu epsilon tau}
|\nu(\tau r)-\nu(r)|\le\ep\,\nu(r).
\ee
Define smooth non-increasing bump functions
$\eta_1,\eta_2:[0,\infty)\to[0,1]$ such that
\[
\eta_1(r)=
\begin{cases}
1,& r\le1,\\
0,& r\ge1+\sigma/2,
\end{cases}
\qquad
\eta_2(r)=
\begin{cases}
1,& r\le1-\sigma/2,\\
0,& r\ge1.
\end{cases}
\]
By the Stone-Weierstrass theorem, there exist functions
$h_j(\l)=\sum_{l=1}^{k_j}a_{lj}e^{-l\l}\in\mathcal B,j=1,2$
for some $k_j=k_j(\epsilon)$ and coefficients
$a_{lj}=a_{lj}(\epsilon)$ such that for $\l\in[0,\infty)$,
\be\label{Weistrass}\ba
e^{\l}\eta_1(\l)&\le h_1(\l)\le e^{\l}\eta_1(\l)+\epsilon,\\
e^{\l}\eta_2(\l)&\ge h_2(\l)\ge e^{\l}\eta_2(\l)-\epsilon.
\ea\ee
Let
\[
C_1'=\max\{k_1,k_2\}\max\{|a_{lj}|:l=1,2,\dots;j=1,2\},
\qquad
C_2=\max\{k_1,k_2\}+1.
\]
Then it follows from \eqref{intemu}, \eqref{aes} and \eqref{exponential bound} that if $C_2t\leq\l_1^{-1}$, \be\label{eq210} \Big|\int h_j(r)e^{-r}d\mu_t(r)-\int h_j(r)e^{-r}d\nu_t(r)\Big|\leq C_1'\b(C_2t)\int e^{-r}d\nu_t(r)\leq C_1'L\b(C_2t)\nu(t^{-1}). \ee
Then for any $t\le C_2^{-1}\l_1^{-1}$,
\be\ba\label{eq220}
\mu(t^{-1})
&=\mu_t(1)\le \int \eta_1\, d\mu_t
\le \int e^{-r} h_1\, d\mu_t
\quad \text{(by \eqref{Weistrass})}\\
&\le \int e^{-r} h_1\, d\nu_t
   + C_1' L\b(C_2 t)\nu(t^{-1})
\quad \text{(by \eqref{eq210})}.
\ea\ee
Now it suffices to bound $\int e^{-r} h_j\, d\nu_t$ by
$\nu(t^{-1})$, which is easy from the construction of $h_j$:

\begin{lem}\label{L:hj><}
We have, for $j=1,2$, $t<\l_1^{-1}$
\begin{equation}\label{hj<>}
  \bigg|\int e^{-r} h_j\, d\nu_t-\nu(t^{-1}) \bigg|\leq (\ep+\ep L)\nu(t^{-1}).
\end{equation}
\end{lem}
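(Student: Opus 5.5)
We write $\int e^{-r}h_j\,d\nu_t$ as an integral against $\eta_j\,d\nu_t$ plus an error term, and then compare $\int\eta_j\,d\nu_t$ with $\nu_t(1)=\nu(t^{-1})$.

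\emph{Step 1 (error from the Stone--Weierstrass approximation).} By \eqref{Weistrass},
\[
\eta_1\le e^{-r}h_1\le \eta_1+\ep e^{-r}
\qquad\text{and}\qquad
\eta_2-\ep e^{-r}\le e^{-r}h_2\le \eta_2 .
\]
Hence
\[
\Big|\int e^{-r}h_j\,d\nu_t-\int\eta_j\,d\nu_t\Big|\le \ep\int e^{-r}\,d\nu_t .
\]
By \eqref{intemu} with $l=1$, $\int e^{-r}\,d\nu_t=\int e^{-tr}\,d\nu(r)$. Since $t\le\l_1^{-1}$, \eqref{exponential bound} bounds this by $L\nu(t^{-1})$. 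This step therefore contributes the term $\ep L\nu(t^{-1})$.

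\emph{Step 2 (comparing the bump functions with the indicator of $[0,1]$).} Since $\eta_j$ is non-increasing with values in $[0,1]$, we have:
\begin{itemize}
\item $\mathbf 1_{[0,1]}\le\eta_1\le\mathbf 1_{[0,1+\sigma/2]}$, so $\nu_t(1)\le\int\eta_1\,d\nu_t\le\nu_t(1+\sigma/2)$;
\item $\mathbf 1_{[0,1-\sigma/2]}\le\eta_2\le\mathbf 1_{[0,1]}$, so $\nu_t(1-\sigma/2)\le\int\eta_2\,d\nu_t\le\nu_t(1)$.
\end{itemize}
(Here we take $\nu$ with $\nu(0)=0$, or otherwise just track the mass at $0$, which appears on both sides.) In terms of $\nu$ these read $\nu_t(\tau)=\nu(\tau t^{-1})$ with $\tau=1\pm\sigma/2\in(1-\sigma,1+\sigma)$. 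Since $t^{-1}>\l_1$, \eqref{nu epsilon tau} applies with $r=t^{-1}$ and gives
\[
|\nu(\tau t^{-1})-\nu(t^{-1})|\le\ep\,\nu(t^{-1}).
\]
This step contributes the term $\ep\nu(t^{-1})$.

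\emph{Step 3 (conclusion).} By the triangle inequality, the two contributions add up to $(\ep+\ep L)\nu(t^{-1})$, which is \eqref{hj<>}.

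The only delicate point is the endpoint convention, namely whether $\nu_t(1)$ means $\nu_t([0,1])$ or $\nu_t([0,1))$ given left continuity. This is handled by the sandwich in Step 2: both conventions lie between $\nu_t(1-\sigma/2)$ and $\nu_t(1+\sigma/2)$, so the bound is unaffected.
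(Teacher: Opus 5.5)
Your proof is correct and follows the paper's argument essentially line by line. The Stone--Weierstrass error is bounded by $\ep\int e^{-r}\,d\nu_t=\ep\int e^{-tr}\,d\nu(r)\le \ep L\,\nu(t^{-1})$ via \eqref{Weistrass}, \eqref{intemu} and \eqref{exponential bound}. Then $\int\eta_j\,d\nu_t$ is sandwiched between $\nu((1-\sigma/2)t^{-1})$ and $\nu((1+\sigma/2)t^{-1})$, and \eqref{nu epsilon tau} compares both to $\nu(t^{-1})$. Your separate one-sided bounds for each $j$ and your remark on endpoint conventions are slightly more careful than the paper's single two-sided bound, but the route is the same.
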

\begin{proof}

Indeed, by \eqref{intemu}, \eqref{Weistrass} and \eqref{exponential bound}, for $j=1,2$
\be\label{eq211}
\Big|\int e^{-r}h_j(r)\,d\nu_t(r)-\int \eta_j(r)\,d\nu_t(r)\Big|
\le \ep\int e^{-r}\,d\nu_t(r)=\ep\int e^{-tr}\,d\nu(r)
\le \ep L\,\nu(t^{-1}).
\ee
Note that
$
\nu\big((1-\sigma/2)t^{-1}\big)
=\nu_t(1-\sigma/2)
\le\int\eta_j(r)d\nu_t(r)
\le\nu_t(1+\sigma/2)
=\nu\big((1+\sigma/2)t^{-1}\big).
$
Hence, by \eqref{nu epsilon tau}, for any $t\le\l_1^{-1}$,
\be\label{eq212}
\Big|\int\eta_j(r)d\nu_t(r)-\nu(t^{-1})\Big|
\le \ep\nu(t^{-1}),\qquad j=1,2.
\ee
\end{proof}

Thus by \eqref{eq220} and \eqref{hj<>}, for any $t\le C_2^{-1}\l_1^{-1}$,\be\label{eq2200}
\mu(t^{-1})\le \big(1+\ep+L\ep+C_1'L\b(C_2t)\big)\nu(t^{-1}).
\ee
Similarly, for any $t\le C_2^{-1}\l_1^{-1}$,
\be\label{eq221}
\mu(t^{-1})
\ge\big(1-\ep-L\ep-C_1'L\b(C_2t)\big)\nu(t^{-1}).
\ee
By \eqref{eq2200} and \eqref{eq221}, setting $s=t^{-1}$ and $C_1=\max\{C_1',C_2\}$ finishes the proof.

		\fi
\section{Local bounded geometry analysis}\label{local bounded}
In this section, we assume that $(X,g)$ is a complete Riemannian
manifold with $\dim(X)=n$.
Rather than imposing bounded geometry assumptions on the entire space,
we impose them only on an open subset of $(X,g)$. Our goal is to
construct a nice partition of $X$, analogous to the standard cube
partition of $\R^n$.
In this section, we show that such a partition exists (\Cref{vitali}), and that each piece of the
partition enjoys good geometric and analytic properties, as summarized in
\Cref{nice properties}.
These properties allow us to obtain uniform heat kernel estimates, uniform heat
kernel expansion remainder estimates (\Cref{sec of heat expansion remainder}),
and a uniform quantitative Weyl law (\cref{sec of quantitative weyl law}).
In this way, we are able to extend the DN bracketing method to
 manifolds.
 
\def\stum{1}
\if\stum1
\subsection{Review on nice domains}\label{nice properties}

In this subsection, for the reader’s convenience, we briefly review 
several classes of domains with good geometric and analytic properties, which can
be found in standard textbooks or monographs on geometric analysis
\cite{adams2003sobolev,jost2005riemannian,grigoryan2009heat}, as well as in
Grigor'yan's papers~\cite{grigor1994heat,grigor1997gaussian}.
\def\WBG{{\mathrm{WBG}}}
\begin{defn}[Weak bounded geometry $\WBG(R,\tau)$]
    Let $R,\tau>0$.  We say that a domain $U\subset X$ belongs to $\WBG(R,\tau)$
 if the absolute value of the Ricci curvature is
bounded by $(n-1)R$, and the injectivity radius is
uniformly bounded below by $\tau>0$ at every $x\in U$.
\end{defn}
\begin{defn}[Lipschitz domain $\Lip(L_1,L_2,r_1)$]\label{lip domain}
Let $L_1,L_2>1$ and $r_1>0$. We say that a domain $U\subset X$ belongs to
$\Lip(L_1,L_2,r_1)$ if for every $x\in\partial U$ there exists a coordinate chart
$\varphi:B_{r_1}(x)\to\R^n$ and a Lipschitz function
$\psi:\R^{n-1}\to\R$ such that
\[
\varphi(B_{r_1}(x)\cap U)
=\{(y',y_n)\in\varphi(B_{r_1}(x)):\ y_n>\psi(y')\},
\qquad
\varphi(B_{r_1}(x)\cap\partial U)
=\{y_n=\psi(y')\}.
\]
Moreover, $\varphi$ is bi-Lipschitz and $\psi$ is Lipschitz, with
\begin{equation}\label{Lip varphi0}
L_1^{-1}d(z,w)\le |\varphi(z)-\varphi(w)|\le L_1 d(z,w),
\qquad z,w\in B_{r_1}(x),
\end{equation}
and
\begin{equation}\label{lip psi0}
|\psi(y')-\psi(z')|\le L_2|y'-z'|,
\qquad y',z'\in\R^{n-1}.
\end{equation}
\end{defn}
\def\Cone{{\mathrm{Cone}}}
\begin{defn}[Uniform interior cone $\Cone(L,\rho,\varepsilon_0,\theta)$] \label{defn cone}
Let $L>1$, $\rho,\varepsilon_0>0$, and $\theta\in(0,\pi/2]$. A domain
$U\subset X$ belongs to $\Cone(L,\rho,\varepsilon_0,\theta)$ if for every
$x\in\partial U$ there exists a coordinate chart
$\varphi:B_\rho(x)\to\R^n$ as in \Cref{lip domain} with bi-Lipschitz constant $L$
such that
\[
y+z\in \varphi(B_\rho(x)\cap U)
\]
when $y\in \varphi( B_{\rho/2}(x)\cap \bar{U})$ and $z\in C$, where
\[
C:=\bigl\{(z',z_n):(\cot\theta)|z'|<z_n<\varepsilon_0\bigr\}.
\]
\end{defn}

\def\VD{{\mathrm{VD}}}
It is standard that the following holds.

\begin{prop}\label{lip is cone}
If $U\in \Lip(L_1,L_2,r_1)$, then $U\in \Cone(L,\rho,\varepsilon_0,\theta)$ for some
constants $L,\rho,\varepsilon_0,\theta$ depending only on $L_1,L_2,r_1$ and $n$.
\end{prop}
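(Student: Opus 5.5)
The plan is to take the Lipschitz graph description of $\partial U$ near each boundary point and build the cone directly in the chart $\varphi$. The cone's axis will be the vertical direction $e_n$, and its aperture will be set by the Lipschitz constant $L_2$ of $\psi$.

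Fix $x\in\partial U$ and let $\varphi:B_{r_1}(x)\to\R^n$ and $\psi$ be as in \Cref{lip domain}. Choose $\theta\in(0,\pi/2)$ with $\cot\theta = L_2$, though any $\cot\theta> L_2$ would also do.

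\textbf{Key Euclidean fact.} If $y=(y',y_n)$ satisfies $y_n\ge\psi(y')$ and $z=(z',z_n)$ satisfies $z_n> L_2|z'|$, then
\[
y_n+z_n>\psi(y')+L_2|z'|\ge\psi(y'+z').
\]
Hence $y+z$ lies strictly above the graph of $\psi$, and therefore lies in $\varphi(B_{r_1}(x)\cap U)$ provided $y+z\in\varphi(B_{r_1}(x))$.

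It then remains to guarantee that $y+z$ stays inside the chart image. I will take $\rho=r_1$ and restrict $\varphi$ to $B_\rho(x)$.

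For $y\in\varphi(B_{\rho/2}(x)\cap\bar U)$, the bi-Lipschitz bound \eqref{Lip varphi0} gives $|y-\varphi(x)|\le L_1\rho/2$. I need a Euclidean ball around $\varphi(x)$ of radius comparable to $\rho/L_1$ contained in $\varphi(B_\rho(x))$. This follows from invariance of domain together with the lower Lipschitz bound:
\begin{itemize}
\item $\varphi(B_\rho(x))$ is open;
\item points at Euclidean distance less than $\rho/L_1$ from $\varphi(x)$ cannot reach the image of $\partial B_\rho(x)$.
\end{itemize}
Here the main obstacle appears. The point $y$ may be at distance up to $L_1\rho/2$ from $\varphi(x)$, which can exceed $\rho/L_1$.

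\textbf{Fixing the scale mismatch.} I would shrink the scale: work with $B_{\rho'/2}(x)$ where $\rho'=\rho/(2L_1^2)$. Then $|y-\varphi(x)|\le \rho/(4L_1)$. Choosing
\[
\varepsilon_0=\frac{\rho}{4L_1\sqrt{1+\tan^2\theta}}
\]
makes every $z\in C$ satisfy $|z|<\rho/(4L_1)$, so $|y+z-\varphi(x)|<\rho/(2L_1)$, and $y+z\in\varphi(B_\rho(x))$. To match the format of \Cref{defn cone} exactly, I then restrict the chart to $B_{\rho'}(x)$, using the final cone parameter $\rho'$. I would repeat the containment argument at radius $\rho'$ with the constants rescaled: take $\varepsilon_0\sim\rho'/L_1$ and replace the $\rho/2$-ball by the $\rho'/2$-ball after a further factor $L_1^2$ reduction.

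This bookkeeping is routine once the invariance-of-domain inclusion $B_{s/L_1}(\varphi(x))\subset\varphi(B_s(x))$ is in hand. Combined with the graph inequality above, it gives $U\in\Cone(L_1,\rho',\varepsilon_0,\theta)$ with all constants depending only on $L_1,L_2,r_1$ and $n$.
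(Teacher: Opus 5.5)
Your graph inequality $y_n+z_n>\psi(y')+L_2|z'|\ge\psi(y'+z')$ and your choice $\cot\theta=L_2$ are right; the paper also takes $\theta=\arctan(L_2^{-1})$. The gap is in the final step. In \Cref{defn cone} the ball for $y$ and the ball for the chart are tied by the fixed ratio $1/2$. With cone parameter $\rho'$ you must show $y+z\in\varphi(B_{\rho'}(x)\cap U)$ for $y\in\varphi(B_{\rho'/2}(x)\cap\bar U)$; membership in $\varphi(B_{r_1}(x))$ is not enough. Your containment argument is centred at $\varphi(x)$: it gives $|y-\varphi(x)|\le L_1 s/2$ and $B^{\R^n}_{s/L_1}(\varphi(x))\subset\varphi(B_s(x))$. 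Closing it at a single scale $s$ therefore needs $L_1s/2+|z|<s/L_1$, i.e.\ essentially $L_1^2<2$. This condition is scale invariant, so shrinking $\rho$ to $\rho/(2L_1^2)$, and then again, never closes. Each ``further factor $L_1^2$ reduction'' reproduces the same mismatch one level down. The bookkeeping you call routine is impossible along this route when $L_1\ge\sqrt2$.

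The missing idea is to centre the Euclidean ball at $y=\varphi(w)$ rather than at $\varphi(x)$. Take $w\in B_{\rho/2}(x)$ and let $p$ be a boundary point of the open set $\varphi(\mathring B_\rho(x))$, which is open by invariance of domain. Then $p=\lim\varphi(v_k)$ with $v_k\to v$, by the lower bound in \eqref{Lip varphi0} and completeness, and necessarily $d(v,x)=\rho$. Hence $|p-\varphi(w)|\ge d(v,w)/L_1\ge\rho/(2L_1)$. By connectedness, the whole Euclidean ball of radius $\rho/(2L_1)$ about $\varphi(w)$ lies in $\varphi(B_\rho(x))$. Every $z\in C$ satisfies $|z|<\varepsilon_0/\cos\theta\le\sqrt2\,\varepsilon_0$, so the choice $\varepsilon_0=r_1/(2L_1\sqrt n)$ already gives $y+z\in\varphi(B_{r_1}(x))$, and your graph inequality finishes the proof. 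No shrinking is needed: $L=L_1$, $\rho=r_1$, $\varepsilon_0=r_1/(2L_1\sqrt n)$, $\theta=\arctan(L_2^{-1})$, which is exactly the paper's choice.
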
\begin{proof}
    It is easy to check that we can take $L=L_1,\rho=r_1,\varepsilon_0=\frac{r_1}{2L_1\sqrt{n}},\theta=\arctan(L_2^{-1}).$
\end{proof}
\begin{rem}
 In fact, it is well known that the uniform Lipschitz condition is equivalent to
the uniform cone condition, which consists of both uniform interior and uniform
exterior cone conditions.
However, the uniform interior cone condition alone does not guarantee uniform Lipschitz
regularity; see the figure below for a counterexample.
\begin{center}
\begin{tikzpicture}
  \def\r{0.5}          
  \def\fillc{gray!25}  

  \filldraw[fill=\fillc, draw=black, thick]
    (0,0) -- (2,0) -- (2,1)
    arc[start angle=0, end angle=180, radius=\r]  
    arc[start angle=0, end angle=180, radius=\r]  
    -- (0,0) -- cycle;
\end{tikzpicture}
\end{center}
\end{rem}

\begin{defn}[Global Sobolev inequality $S(C_S)$]
Let $C_S>0$. A  domain
$U\subset X$ belongs to $S(C_S)$ if for all $u\in W^{1,2}(U),$
$$
\|u\|_{L^{\frac{2n}{n-2}}}\leq C_S\| u\|_{W^{1,2}}.
$$
\end{defn}

The following proposition is well known. 
\begin{prop}\label{Cone implies SObolev}
     If $U\in \Cone(L,\rho,\varepsilon_0,\theta)\cap\WBG(R,\tau)$,  then $U\in S(C_S)$ for some $C_S$ depending only on $L,\rho,\varepsilon_0,\theta,R,\tau$ and $n$.
\end{prop}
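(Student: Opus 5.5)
The plan is the standard route: use the geometry on $U$ to get uniform local Sobolev inequalities, and use the interior cone condition to reduce the global statement on $U$ to finitely many such local pieces. A remark first: the displayed inequality only makes sense for $n\ge 3$. For $n=2$, and $n=1$, I would read $S(C_S)$ as the analogous subcritical statement (any fixed exponent $q<\infty$), and the same argument gives it.

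\textbf{Step 1: uniform harmonic-radius control.} Since $U\in\WBG(R,\tau)$, we have $|\mathrm{Ric}|\le (n-1)R$ and $\mathrm{inj}\ge\tau$ on $U$. By the Anderson / Jost--Karcher harmonic radius estimate, there is $r_h=r_h(n,R,\tau)>0$ such that for each $x\in U$ there are harmonic coordinates on $B_{r_h}(x)$ with
\[
\tfrac12\delta_{ij}\le g_{ij}\le 2\delta_{ij},
\]
and $C^{0,\alpha}$ control on $g_{ij}$. Consequently:
\begin{itemize}
\item on balls of radius comparable to $r_h$, the Riemannian volume, the $L^p$ norms and the $W^{1,2}$ norms are uniformly comparable to their Euclidean counterparts;
\item by Bishop--Gromov (using only the lower Ricci bound), the volume of balls of radius $\le r_h$ is uniformly doubling, with constants depending only on $n,R,\tau$.
\end{itemize}
If one prefers to avoid harmonic coordinates, the alternative is Croke's local isoperimetric inequality. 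It needs only the injectivity radius lower bound, together with the Ricci bound for volume comparison, and gives a Sobolev inequality on balls of radius $\tau/2$ directly.

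\textbf{Step 2: local Sobolev on boundary patches.} Fix $x\in\partial U$, and take the chart $\varphi:B_\rho(x)\to\R^n$ from \Cref{defn cone}, with bi-Lipschitz constant $L$. The set $\varphi(B_{\rho/2}(x)\cap U)$ satisfies a uniform cone condition in $\R^n$: its cone has aperture $\theta$ and height $\varepsilon_0$. Moreover the pulled-back metric is comparable to the Euclidean one with constant $L^2$. Now apply the Euclidean Sobolev embedding for domains with the cone condition (Adams, Thm.~4.12). Its proof uses only the potential estimate
\[
|u(y)|\lesssim \int_{y+C}\frac{|\nabla u|}{|y-z|^{n-1}}+\dots
\]
along the cone, so the constant depends only on $n,\theta,\varepsilon_0$. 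This gives
\[
\|u\|_{L^{2n/(n-2)}(B_{\rho/4}(x)\cap U)}\le C\|u\|_{W^{1,2}(B_{\rho/2}(x)\cap U)},
\]
with $C=C(L,\rho,\varepsilon_0,\theta,n)$. For interior points at distance $\ge\rho/4$ from $\partial U$, Step 1 gives the same local inequality on balls of radius $r_0:=\min(\rho/4,r_h)/2$.

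\textbf{Step 3: globalization.} This is the main point, because $U$ may be noncompact with infinite volume. I would proceed as follows.
\begin{enumerate}
\item Choose a maximal $r_0/2$-separated set $\{x_k\}$ in $\bar U$. By the doubling property from Step 1, the balls $B_{r_0}(x_k)$ have overlap multiplicity bounded by $N=N(n,R,\tau,r_0)$.
\item Apply the local inequality on each ball and write $p=2n/(n-2)>2$. Then
\[
\|u\|_{L^p(U)}^p\le\sum_k\|u\|_{L^p(B_k\cap U)}^p\le C^p\sum_k\|u\|_{W^{1,2}(B_k')}^p .
\]
\item Since $p/2\ge1$,
\[
\sum_k a_k^{p/2}\le\Big(\sum_k a_k\Big)^{p/2},
\]
so the right side is at most
\[
C^p\Big(\sum_k\|u\|_{W^{1,2}(B_k')}^2\Big)^{p/2}\le C^pN^{p/2}\|u\|_{W^{1,2}(U)}^p .
\]
\end{enumerate}
Taking $p$-th roots gives $U\in S(C_S)$ with $C_S=CN^{1/2}$. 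This depends only on $L,\rho,\varepsilon_0,\theta,R,\tau,n$, since $r_h$ and $N$ depend only on $n,R,\tau$ and the cone data.

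The main obstacle is Step 2: making sure the cone-condition Sobolev constant is uniform. The chart is only bi-Lipschitz, so the metric coefficients are merely bounded measurable. This is fine for the $W^{1,2}$ and $L^p$ comparisons, which only need two-sided bounds on $g_{ij}$ and on $\sqrt{\det g}$. I would check it carefully using $L^{-2}\delta\le g\le L^{2}\delta$ in $\varphi$-coordinates.
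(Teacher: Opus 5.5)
Your argument is correct in substance, and its core reduction matches the paper's. You transfer to $\mathbb{R}^n$ through the bi-Lipschitz charts near $\partial U$ and through uniformly controlled coordinates in the interior. You then use the Euclidean cone-condition Sobolev theory of Adams, whose constants depend only on $n$ and the dimensions of the cone.

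What differs is the organization. The paper transplants the pointwise estimate $|u(x)|\le |u_{C_x}|+C'\int_{C_x}|\nabla u(y)|\,d(x,y)^{1-n}\,dy$ to every point of $U$. It then reruns Adams' proof on $U$ itself, leaving the global bookkeeping implicit in the word ``verbatim''. You instead apply the Euclidean result patch by patch and globalize with a bounded-overlap cover and $\sum_k a_k^{p/2}\le(\sum_k a_k)^{p/2}$. This makes the passage from local to global on a possibly noncompact, infinite-volume $U$ fully explicit. Your harmonic coordinates are also the natural tool for the Ricci-only hypothesis in $\mathrm{WBG}$. The paper's uniformly bi-Lipschitz exponential map rests on Rauch comparison, i.e.\ on a sectional curvature bound; that bound is available wherever the proposition is actually applied. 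The cost of your route is an overlap count and a localized form of the Euclidean theorem.

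Two details need adjusting.

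\begin{itemize}
\item \emph{Localizing the Euclidean theorem.} By the definition of the cone condition, cones based at points of $\varphi(B_{\rho/2}(x)\cap\bar U)$ are only guaranteed to lie in $\varphi(B_\rho(x)\cap U)$. So $\varphi(B_{\rho/2}(x)\cap U)$ need not itself satisfy a cone condition, and Adams' theorem does not apply to it as a domain. State the local estimate with $L^{2n/(n-2)}(B_{\rho/2}(x)\cap U)$ on the left and $W^{1,2}(B_\rho(x)\cap U)$ on the right; alternatively, truncate the cone and shrink the radii by factors depending on $L$. Then prove it directly from the pointwise cone estimate and the Hardy--Littlewood--Sobolev inequality, which applies since $1<2<n$.
\item \emph{The overlap count.} Bishop--Gromov cannot bound the overlap of balls centred near $\partial U$, because those balls leave $U$, and no curvature bound is assumed outside $U$. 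Use instead that the chart distorts Riemannian volume by at most a factor $L^{\pm n}$, and that the cone gives a uniform lower bound on $|B_r(x_k)\cap U|$. Then $N$ also depends on $L,\rho,\varepsilon_0,\theta$, which the statement allows.
\end{itemize}
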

\begin{proof}[Outline of proof]
The proof is essentially the same as in the $\R^n$ case treated in
\cite[Theorem~4.12]{adams2003sobolev}, where uniform control of the
constants is achieved.
The key point is: if $u\in C^\infty(\R^n)$, then for any $x\in\R^n$ and any
cone $C_x$ with vertex at $x$, one has (see \cite[Lemma~4.15]{adams2003sobolev})
\be
|u(x)| \le |u_{C_x}| + C'\int_{C_x} |\nabla u(y)|\,|x-y|^{1-n}\,dy,
\ee
where $u_{C_x}$ denotes the average of $u$ over $C_x$, and the constant $C'$
depends only on $n$ and on the height and opening angle of the cone $C_x$.

An analogous estimate can be obtained on $U$ by reducing the problem to the
Euclidean case: Near the boundary, one uses coordinate charts $\varphi$ as in
\Cref{lip domain}; away from the boundary, one uses the fact that the exponential
map is bi-Lipschitz on balls of radius $r\le r_*$, where the admissible radius
$r_*>0$ and the uniform bi-Lipschitz constants depend only on $(R,\tau)$
(cf.~\cite[Corollary~6.6.1]{jost2005riemannian}). As long as the size of the cone is smaller than a radius
$r_*$, the volume of
the cone, as well as the integral of $d(x,y)^{1-n}$ over a cone or ball of radius
at most $r_*$, is uniformly comparable to the corresponding  quantities in $\R^n$.
With these observations in place, the proof follows by repeating the argument of
\cite[Theorem~4.12]{adams2003sobolev} verbatim.
\end{proof}

\begin{defn}[Uniform Gaussian estimate $G(c_1)$]
    Let $c_1>0$, and let $U\subset X$ be a domain. 
Let $K(t,x,y)$ denote either the Dirichlet or Neumann heat kernel on $U$
(for the Laplace--Beltrami operator associated with $g$). We say  $U\in G(c_1)$,
if for all $t\in(0,1)$,
\begin{equation}\label{heat kernel estimate on Qj0}
0\le K(t,x,y)
\le c_1\, t^{-\n}e^{-\frac{\, d^2(x,y)}{8t}},\quad \text{ a.e. } x,y\in U.
\end{equation}

\end{defn}
\def\stu{1}
\if\stu0
The result below is a special case of~\cite[Theorem~4.1]{sturm1996analysis}.
Sturm’s work in fact yields two-sided Gaussian heat kernel estimates; however, we state only the version we need.

\begin{thm}[Sturm's theory, Theorem 4.1 in \cite{sturm1996analysis}]\label{heat kernle estimate0}
If $U\in\VD(r_0,C_D)\cap \WP(r_0,C_P,E)$, then $U\in G(c_1,c_2,c_3)$ for some constants $c_1,c_2,c_3$ depending only on $r_0,C_D,C_P,n$ and $E$. 
\end{thm}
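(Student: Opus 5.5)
The plan is to follow Sturm's approach for strongly local, regular Dirichlet forms. We view the Dirichlet or Neumann Laplacian on $U$ as the generator of the form $\mathcal E(u,u)=\int_U|\nabla u|^2$, with domain $W^{1,2}_0(U)$ or $W^{1,2}(U)$ respectively. Its intrinsic metric is the inner Riemannian distance of $U$, which dominates $d$, so a Gaussian bound in the intrinsic metric implies the one stated in $G(c_1)$. The assumptions $\VD(r_0,C_D)$ and $\WP(r_0,C_P,E)$ are precisely the local volume doubling and (weak, with enlargement factor $E$) Poincaré inequalities on intrinsic balls of radius $\le r_0$. These are the hypotheses of \cite[Theorem~4.1]{sturm1996analysis}. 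In the Dirichlet case we compare with the Neumann kernel via domain monotonicity, so it suffices to treat the Neumann form.

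\textbf{Step 1 (local Sobolev inequality).} Combining doubling and Poincaré on balls of radius $r\le r_0$ by the standard chaining/telescoping argument yields a localized Sobolev inequality. For $B=B_r(x)$ with $r\le r_0$ we aim for
\[
\Big(\fint_B |u|^{2\nu/(\nu-2)}\Big)^{(\nu-2)/\nu}\le C\,\frac{r^2}{1}\fint_B\big(|\nabla u|^2+r^{-2}u^2\big),
\]
where $\nu$ and $C$ depend only on $C_D,C_P,E$. For weak Poincaré with $E>1$ we first upgrade to the strong form on balls using a Whitney-type covering, which is possible because doubling holds.

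\textbf{Step 2 (Moser iteration).} Moser iteration for nonnegative subsolutions of $\partial_t u+\Delta u=0$ on parabolic cylinders $(s-r^2,s)\times B_r(x)$ with $r\le r_0$ gives the mean-value inequality $\sup_{Q'}u^2\le C\,(r^2|B_r|)^{-1}\int_Q u^2$. Applied to $u=K(\cdot,\cdot,y)$ and combined with $\int K(t,z,y)^2\,dz=K(2t,y,y)$, this yields the on-diagonal bound
\[
K(t,x,x)\le C\,|B_{\sqrt t}(x)|^{-1},\qquad t\le r_0^2.
\]
Note that only cylinders meeting the boundary are involved, and the Neumann condition is handled because the form domain is $W^{1,2}(U)$, which requires no cutoff at $\partial U$.

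\textbf{Step 3 (off-diagonal Gaussian factor).} To obtain the off-diagonal factor we use Davies' perturbation method, or Grigor'yan's integrated maximum principle. Here $E_D(t,y)=\int K(t,z,y)^2e^{d(z,y)^2/(2(1+\epsilon)t)}\,dz$ is nonincreasing in $t$. Combined with the mean-value inequality and the semigroup property, this gives
\[
K(t,x,y)\le C\,\big(|B_{\sqrt t}(x)||B_{\sqrt t}(y)|\big)^{-1/2}e^{-d(x,y)^2/(4(1+\epsilon)t)}.
\]
Choosing $\epsilon=1$ already gives the constant $8$ in \eqref{heat kernel estimate on Qj0}.

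\textbf{Step 4 (volume lower bound).} Finally we need $|B_{\sqrt t}(x)|\ge c\,t^{n/2}$ for $t<1$. On scales $\le r_0$ the dimension $n$ enters through the Riemannian structure: small balls are almost Euclidean, so $|B_r(x)|\sim\omega_n r^n$ for $r$ small. Doubling then propagates a uniform lower bound up to scale $\min(1,r_0)$. Times $t\in[r_0^2,1)$ are handled by the semigroup property and the bound at $t=r_0^2$, at the cost of constants depending on $r_0$.

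The main obstacle is this last uniformity. Near the boundary, the lower volume bound and the almost-Euclidean comparison are not automatic from $\VD$ and $\WP$ alone. I would try to extract the uniform lower bound $|B_r(x)\cap U|\ge c\,r^n$ from the assumed constants. If that fails, I would instead state the conclusion in the volume-normalized form of Step 3 and defer the conversion to $t^{-n/2}$ to the cone and curvature hypotheses used elsewhere in this section.
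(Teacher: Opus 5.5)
Your Steps 1--3 are a sound outline of Sturm's machinery: doubling and Poincar\'e give a local Sobolev inequality, Moser iteration gives the mean-value and on-diagonal bounds, and the Davies/Grigor'yan integrated maximum principle gives the off-diagonal factor. That is all the paper relies on here. It gives no argument of its own beyond citing Sturm's Theorem~4.1, whose conclusion is normalized by the ball volumes $|B_{\sqrt t}(x)|$, i.e.\ exactly the form you reach at the end of Step~3.

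The genuine gap is Step~4, and it cannot be closed from the stated hypotheses. A lower bound $|B_r(x)\cap U|\ge c\,r^n$, with $c$ depending only on $r_0,C_D,C_P,E,n$, does not follow from $\mathrm{VD}$ and $\mathrm{WP}$, for three reasons.
\begin{enumerate}
\item The scale below which balls are almost Euclidean is controlled by curvature and injectivity radius, and neither is among the hypotheses.
\item For centres near $\partial U$ the Euclidean comparison fails at every scale.
\item Doubling only passes lower bounds from larger balls to smaller ones, via $|B_r(x)|\ge C_D^{-1}|B_{2r}(x)|$, so it cannot propagate $|B_r|\gtrsim r^n$ upward in $r$.
\end{enumerate}

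A concrete obstruction is the convex slab $U_\epsilon=B^{n-1}_1\times(0,\epsilon)\subset\R^n$. For $x\in U_\epsilon$, the homothety of ratio $\tfrac12$ centred at $x$ maps $B_{2r}(x)\cap U_\epsilon$ injectively into $B_r(x)\cap U_\epsilon$, so $\mathrm{VD}$ holds with $C_D=2^n$. Payne--Weinberger on the convex sets $B_r(x)\cap U_\epsilon$ gives $\mathrm{WP}$ with $E=1$ and an absolute $C_P$. Both hold at every scale and uniformly in $\epsilon$. Yet the Neumann heat kernel satisfies $K(t,x,x)\ge |U_\epsilon|^{-1}=(\omega_{n-1}\epsilon)^{-1}$ for all $t>0$ (keep only the constant eigenfunction). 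So no bound $c_1t^{-n/2}$ at any fixed time can hold uniformly in $\epsilon$.

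Your fallback is therefore the correct reading rather than a retreat: take $G(c_1,c_2,c_3)$ in Sturm's volume-normalized form and stop after Step~3, which then reproduces the cited theorem. If you want the $t^{-n/2}$ normalization of $G(c_1)$, a non-collapsing input must be added to the hypotheses, and in this paper it comes from geometry. In the route the paper actually uses (\Cref{Cone implies SObolev} and \Cref{Varo}), the uniform cone and $\mathrm{WBG}$ hypotheses give the Sobolev inequality $S(C_S)$ with the dimensional exponent $2n/(n-2)$. The Nash--Varopoulos argument then yields $t^{-n/2}$ directly, with neither Moser iteration nor a separate volume lower bound. This is consistent with the slab above: testing $u\equiv 1$ shows its best constant satisfies $C_S\ge |U_\epsilon|^{-1/n}\to\infty$.
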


It follows from \Cref{lip and geometry bounds implies poincare and VD} and \Cref{heat kernle estimate0} that
\begin{cor}\label{cor heat}
    If $U\in \Lip(L_1,L_2,r_1)\cap\WBG(R,\tau)$, then
$U\in G(c_1,c_2,c_3)$ for some constants $c_1,c_2,c_3$ depending only on
$L_1,L_2,r_1,R,\tau$, and $n$.

\end{cor}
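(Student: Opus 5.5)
The plan is to deduce the corollary directly from Sturm's theorem (\Cref{heat kernle estimate0}). It therefore suffices to show two things for the intrinsic metric measure space $(U,d,\dvol)$, in which the Neumann heat kernel is the natural object. First, $U$ satisfies a local volume doubling condition $\VD(r_0,C_D)$. Second, $U$ satisfies a local weak Poincar\'e inequality $\WP(r_0,C_P,E)$. In both, $r_0,C_D,C_P,E$ must depend only on $L_1,L_2,r_1,R,\tau$ and $n$; this is the content of \Cref{lip and geometry bounds implies poincare and VD}. The Dirichlet heat kernel needs no separate argument: by domain monotonicity it is pointwise dominated by the Neumann one, and in any case it is dominated by the heat kernel of a slightly larger region. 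The Gaussian factor $e^{-d^2/(8t)}$ with the fixed constant $8$ is obtained by taking the exponent constant from Sturm's estimate, possibly after enlarging $c_1$ and using $t<1$.

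For doubling, I would fix $r_*\le\min\{\tau,r_1\}/2$, depending on $R$ and $\tau$, such that the exponential map is $2$-bi-Lipschitz on balls of radius $r_*$ (cf.~\cite[Corollary~6.6.1]{jost2005riemannian}). Then I would split into two cases for a ball $B_r(x)\cap U$ with $r\le r_0\ll r_*$.
\begin{itemize}
\item If $B_{2r}(x)\subset U$, the volume is comparable to $r^n$ by the bi-Lipschitz property.
\item If the ball meets $\partial U$, I would pass to the chart $\varphi$ of \Cref{lip domain} centered at a nearby boundary point. There $\varphi(U)$ is a Lipschitz epigraph with constant $L_2$, and the interior cone of \Cref{lip is cone} with the constants given there shows $|B_r(x)\cap U|\ge c\,r^n$.
\end{itemize}
Both cases give $c\,r^n\le |B_r(x)\cap U|\le C r^n$ uniformly, hence doubling. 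One subtlety is that balls should be taken in the intrinsic distance of $U$. The Lipschitz graph structure shows that the intrinsic distance is comparable to $d$ at scales $\le r_0$, again with constants depending only on $L_1$ and $L_2$.

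For the Poincar\'e inequality, I would again work in charts. Interior balls are handled by the Euclidean Poincar\'e inequality transported by a $2$-bi-Lipschitz map. For boundary balls, I would flatten the boundary by $(y',y_n)\mapsto(y',y_n-\psi(y'))$, which is bi-Lipschitz with constant depending on $L_2$. This makes $\varphi(B_r(x)\cap U)$ comparable to a half-ball, on which the Poincar\'e inequality holds. The ``weak'' version, which allows the enlarged ball $B_{Er}$ on the right-hand side, absorbs the distortion of the balls under these maps. All distortion factors are powers of $L_1$, $L_2$ and $2$.

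The main obstacle is uniformity near the boundary. The chart $\varphi$ is defined only on $B_{r_1}$, so $r_0$ must be chosen small relative to $r_1/L_1^2$ to ensure that a ball and its $E$-enlargement stay inside a single chart. As a fallback, if this bookkeeping becomes unwieldy, I would instead use \Cref{Cone implies SObolev}. That result gives $U\in S(C_S)$, and the Sobolev inequality yields the on-diagonal bound $K(t,x,x)\le c\,t^{-n/2}$ for $t<1$ by the Nash/Varopoulos argument. The off-diagonal Gaussian factor then follows from Davies' exponential-weight perturbation, or from Grigor'yan's integrated-maximum-principle method~\cite{grigor1997gaussian}. This route also loses at most a constant in the exponent, which $8$ accommodates.
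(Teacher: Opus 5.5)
Your main route is the paper's own argument: the paper deduces this corollary in one line by combining a proposition asserting that a uniformly Lipschitz domain with Ricci and injectivity-radius bounds satisfies local volume doubling and a local weak Poincar\'e inequality with controlled constants (a proposition the paper cites but does not prove), and then invoking Sturm's Gaussian upper bound. Your chart-based sketch of that proposition is a sound way to fill it in (cone condition for the lower volume bound, boundary flattening plus the enlargement in the weak Poincar\'e inequality, local comparability of intrinsic and ambient distance, and $K_D\le K_N$ for the Dirichlet kernel), and your fallback (cone condition $\Rightarrow$ Sobolev $\Rightarrow$ Nash/Varopoulos on-diagonal bound $\Rightarrow$ Grigor'yan's weighted $L^2$ argument) is essentially the route the final version of the paper takes via \Cref{lip is cone}, \Cref{Cone implies SObolev} and \Cref{Varo}.
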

\fi

The statement below is well known, and there are several ways to establish it;
see \cite{grigor1994heat,grigor1997gaussian} for references to different
approaches. For the reader’s convenience, we present the approach based on
Varopoulos~\cite{varopoulos1985hardy}, combined with Grigor'yan’s method
\cite{grigor1997gaussian}.
 \begin{prop}\label{Varo}
Let $U\in S(C_S)$ be bounded. Then $U\in G(c_1)$ for some $c_1$ depends only on $C_S$ and $n.$
\end{prop}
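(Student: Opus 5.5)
The plan follows Varopoulos: derive a Nash-type inequality from the Sobolev inequality, turn it into an on-diagonal bound $t^{-n/2}$ for $t\in(0,1)$, and then obtain the Gaussian factor by Grigor'yan's integrated maximum principle. Throughout, $K$ is the Dirichlet or Neumann heat kernel on $U$. In both cases the associated Dirichlet form is the restriction of $\int|\nabla u|^2$ to a closed subspace of $W^{1,2}(U)$ ($W^{1,2}_0$ or $W^{1,2}$). Hence the inequality $S(C_S)$ applies to every function in the form domain.

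\textbf{Step 1: Nash inequality.} For $n\ge3$, interpolate $L^2$ between $L^1$ and $L^{2n/(n-2)}$ by Hölder:
\[
\|u\|_2^{1+2/n}\le \|u\|_{2n/(n-2)}\,\|u\|_1^{2/n}\le C_S\big(\|\nabla u\|_2^2+\|u\|_2^2\big)^{1/2}\|u\|_1^{2/n}.
\]
This is a Nash inequality with the lower-order term $\|u\|_2^2$, which is harmless for $t<1$. For $n\le2$ I would read $S(C_S)$ as the appropriate substitute, for instance by passing to $U\times$ a torus or by using a fixed exponent. This is a technicality I will not dwell on.

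\textbf{Step 2: on-diagonal bound.} Consider $e^{-t}P_t$, where $P_t$ is the heat semigroup. Its generator is $\Delta+1$, so $P_t$ is replaced by the semigroup of $\Delta+1$. By the Markov property this semigroup contracts $L^1$. Fix $f\ge0$ with $\|f\|_1=1$ and set $E(t)=\|e^{-t}P_tf\|_2^2$. Then
\[
E'=-2\big(\|\nabla u\|^2+\|u\|^2\big)\le -2C_S^{-2}E^{1+2/n}.
\]
Integrating gives $E(t)\le C(n,C_S)\,t^{-n/2}$. By duality and the semigroup property, $K(t,x,y)\le e^{t}C\,t^{-n/2}$, which is $\le c\,t^{-n/2}$ for $t<1$. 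Positivity $K\ge0$ is standard.

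\textbf{Step 3: Gaussian factor.} This is the main obstacle, because $U$ is only assumed to satisfy a Sobolev inequality, with no volume doubling. Following Grigor'yan, I would use the weighted integral
\[
E_D(t,x)=\int_U K(t,x,y)^2e^{d^2(x,y)/(Dt)}\,dy .
\]
It is non-increasing in $t$ when $D\ge2$, by the integrated maximum principle. The reason is that $\xi=d^2/(Dt)$ satisfies $\partial_t\xi+\tfrac12|\nabla\xi|^2\le0$. This holds on $U$ with either boundary condition: for Neumann, integrate by parts with no boundary term; for Dirichlet, $K$ vanishes on $\partial U$.

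Next I combine monotonicity with the Nash-type bound of Step 2 applied on the exterior of a ball, by Grigor'yan's iteration over dyadic times. This yields
\[
E_D(t,x)\le C\,t^{-n/2}\quad\text{for } t<1 .
\]
Alternatively one can apply Davies' perturbation $e^{\alpha\psi}P_te^{-\alpha\psi}$ with $|\nabla\psi|\le1$: repeat the Nash argument for the twisted semigroup to get $\|e^{\alpha\psi}P_te^{-\alpha\psi}\|_{1\to\infty}\le Ct^{-n/2}e^{c\alpha^2t}$, then optimise $\alpha$. I would pick Davies' route, since it only uses the Nash inequality.

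Finally, by the semigroup property and Cauchy–Schwarz,
\[
K(t,x,y)\le E_D(t/2,x)^{1/2}\,E_D(t/2,y)^{1/2}\,e^{-d^2/(2Dt)} .
\]
Choosing the constants so that the exponent is at least $d^2/(8t)$ gives the bound in \eqref{heat kernel estimate on Qj0}. The constant $c_1$ depends only on $C_S$ and $n$, because it comes only from Steps 1–2. Boundedness of $U$ is used only to guarantee discreteness and the existence of a smooth kernel.
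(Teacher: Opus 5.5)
Your proposal is correct and follows the paper's route. Sobolev gives Nash, which gives the on-diagonal bound $K\le c\,t^{-n/2}$ for $t<1$ (Varopoulos). Grigor'yan's weighted integral $E_D$ (the paper takes $D=4$) is then bounded using only this on-diagonal estimate and the integrated maximum principle. The Cauchy--Schwarz/semigroup step finally gives the factor $e^{-d^2/(8t)}$.

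The remaining differences are cosmetic. You handle the Dirichlet and Neumann kernels together and get $L^1$-contractivity from the Markov property, whereas the paper reduces to the Neumann case via the maximum principle and uses boundedness of $U$ to obtain conservativeness. Your stated preference for Davies' perturbation does not match your concluding $E_D$ step. Either route works, but in Grigor'yan's the bound on $E_D$ needs only the on-diagonal estimate and some $D\in(2,4]$, not a second use of the Nash inequality.
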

\begin{proof}[Outline of proof]
All constants here depend only on $C_S$ and $n$. Moreover, by the maximal principle, we only need to deal with the Neumann heat kernel.
\paragraph{Varopoulos' argument.} First, we carry the Varopoulos' (Sobolev $\Rightarrow$ Nash
$\Rightarrow$ ultracontractivity)  argument \cite{varopoulos1985hardy}.
Recall that the Neumann Laplacian $\Delta$ on $U$ is the self-adjoint operator
associated with the closed quadratic form
\[
\mathcal E(f,f):=\int_{U}|\nabla f|^2\,dx,
\qquad \Dom(\mathcal E)=W^{1,2}(U).
\]
Since $U$ is bounded, we have $1\in \Dom(\mathcal E)$. Thus
$e^{-t\Delta}$ is conservative, which implies
\[
\|e^{-t\Delta} f\|_{L^1(U)}=\|f\|_{L^1(U)},\qquad
0\le f\in L^2(U)\cap L^1(U).
\]
Consequently,
\be\label{eq:mass_conservation_neumann}
\|e^{-t\Delta} f\|_{L^1(U)}\le \|f\|_{L^1(U)},\qquad
f\in L^2(U)\cap L^1(U).
\ee
Moreover, the Sobolev inequality on $U$ implies the Nash inequality
\begin{equation}\label{eq:nash_critical}
\|f\|_{L^2(U)}^{2+4/n}
\le C_N\,
\Big(\|\nabla f\|_{L^2(U)}^2+\|f\|_{L^2(U)}^2\Big)\,
\|f\|_{L^1(U)}^{4/n},
\qquad f\in W^{1,2}(U)\cap L^1(U).
\end{equation}
 Let $u(t):=e^{-t\Delta}f$. Then by \eqref{eq:mass_conservation_neumann} and \eqref{eq:nash_critical} (with $c=C_N^{-1}$),
\[
\frac{d}{dt}\|u(t)\|_{L^2(U)}^2
=-2\|\nabla u(t)\|_{L^2(U)}^2
\le
-c\,\frac{\|u(t)\|_{L^2(U)}^{2+4/n}}{\|f\|_{L^1(U)}^{4/n}}+2\| u(t)\|_{L^2(U)}^2,
\]
which yields
$
\|e^{-t\Delta} f\|_{L^2(U)}
\le C e^{2t}\,t^{-n/4}\|f\|_{L^1(U)},\, t>0.
$
By self-adjointness, \[ \|e^{-t\Delta}\|_{L^1(U)\to L^\infty(U)}\leq\|e^{-t\Delta/2}\|_{L^1(U)\to L^2(U)}\|e^{-t\Delta/2}\|_{L^2(U)\to L^\infty(U)}\le C_0\,t^{-n/2},\, t\in(0,2). \] Hence for the Neumann heat kernel, \be\label{Varopu} 0\leq K(t,x,y)\le C_0\,t^{-n/2},t\in(0,2).\ee
\paragraph{On-diagonal estimates \eqref{Varopu} imply Gaussian estimates.}
There are several ways to establish this implication; here we present the
approach due to Grigor'yan~\cite{grigor1997gaussian}. Define
\[
E(x,t):=\int_U K^2(t,x,y)e^{\frac{d^2(x,y)}{4t}}\,dy .
\]
Proceeding exactly as in the proof of \cite[Theorem~2.1]{grigor1997gaussian}, one
can show that, since
\be\label{upper bound of I}
I(x,t):=\int_U K^2(t,x,y)\,dy
=K(2t,x,x)\le C_0'\,t^{-\n},\qquad t\in(0,2),
\ee
there exists a constant $C_1$ such that
\be\label{upper bound of ED}
E(x,t)\le C_1\,t^{-\n},\qquad t\in(0,1).
\ee
Here the last inequality of \eqref{upper bound of I} is guaranteed by \eqref{Varopu}.
 Although \cite{grigor1997gaussian} assumes that \eqref{upper bound of I} holds for
all $t>0$, a direct inspection of the proof shows that it suffices to assume
\eqref{upper bound of I} for $t\in(0,2)$, which yields the Gaussian estimate for
$t\in(0,t_0)$ with any $t_0<2$.

Using the inequality below, (which follows easily from the Cauchy--Schwarz
inequality together with the estimate
$d^2(x,z)+d^2(y,z)\ge d^2(x,y)/2$, see \cite[Proposition~5.1]{grigor1994heat})
\[
K(t,x,y)\le
\sqrt{E\big(x,t/2\big)\,E\big(y,t/2\big)}
\exp\Big(-\frac{d^2(x,y)}{8t}\Big), t>0
\]
and \eqref{upper bound of ED}, the desired Gaussian estimate follows.

\end{proof}

Lastly, we introduce the concept needed for the study of the heat kernel
expansion.
\begin{defn}[Uniform tube condition $T(c,\epsilon_0)$]\label{tube}
Let $c>0$ and $\epsilon_0>0$.  A bounded domain $U\subset X$
belongs to $T(c,\epsilon_0)$, if
for every $0<\epsilon<\epsilon_0$,
\[
|\{x\in U:\ d(x,\partial U)\le \epsilon\}|\le c\,\epsilon|U|^{\frac{n-1}{n}}.
\]
\end{defn}
    
\fi

\subsection{Voronoi-like tessellation}\label{vitali}
Let $W \subset X$ be a relative compact open subset. In this subsection, we assume that there exist constants $r_0, R$ such that:
\begin{enumerate}[(i)]
    \item\label{local bounded 1} The injectivity radius at every point $x \in W$ is bounded below by $3r_0$.
    \item\label{local bounded 2} We assume that the absolute value of the Ricci curvature on the $3r_0$-neighborhood of $W$
    is bounded above by $(n-1)R$.
\end{enumerate}
\def\tQ{{\tilde{Q}}}

The lemma below is used to construct a good ``tessellation'' of $2r_0$-neighborhood of  $W$.

\begin{lem}[Vitali Covering Lemma]\label{Vitali covering0}
If \eqref{local bounded 1} and \eqref{local bounded 2} hold, then the
$2r_0$-neighborhood of $W$ can be covered by finitely many balls
$\{\mathring{B}_{r_0}(x_j)\}_{j=1}^I$ such that:
\begin{enumerate}[(1)]
    \item
    Any two balls in the collection
    $\{\mathring{B}_{5^{-1} r_0}(x_j)\}_{j=1}^I$ are disjoint.
    \item
    There exists a constant
    \[
    N>0,
    \]
    depending \textbf{only} on $n,r_0,R$, such that each ball
    $\mathring{B}_{r_0}(x_j)$ intersects at most $N$ other balls.
\end{enumerate}
\end{lem}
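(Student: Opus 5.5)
The plan is the standard Vitali/maximal-packing argument, with volume comparison on balls of radius $\le 2r_0$ supplying the uniform constant $N$. Let $W_{2r_0}$ denote the $2r_0$-neighborhood of $W$. Since $W$ is relatively compact and $X$ is complete, $\overline{W_{2r_0}}$ is compact.

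\textbf{Step 1: choose the centers.} Choose a maximal family of points $\{x_j\}\subset W_{2r_0}$ with pairwise distances $d(x_j,x_k)\ge 2r_0/5$. Such a family exists by Zorn's lemma, or by a greedy selection. It is finite, because the disjoint balls $\mathring B_{r_0/5}(x_j)$ lie in a compact set, and each has volume bounded below, as shown in Step 2.

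\textbf{Step 2: covering and disjointness.} By maximality, every $y\in W_{2r_0}$ satisfies $d(y,x_j)<2r_0/5<r_0$ for some $j$, so the balls $\mathring B_{r_0}(x_j)$ cover $W_{2r_0}$. The pairwise separation $2r_0/5$ gives that the balls $\mathring B_{r_0/5}(x_j)$ are pairwise disjoint, which is property (1).

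\textbf{Step 3: bounded overlap.} This is the main point, since the constant must depend only on $n,r_0,R$. Fix $j$, and let $J$ be the set of indices $k$ with $\mathring B_{r_0}(x_k)\cap \mathring B_{r_0}(x_j)\neq\emptyset$. For $k\in J$ we have $d(x_k,x_j)<2r_0$. Hence the disjoint balls $\mathring B_{r_0/5}(x_k)$, $k\in J$, all lie in $B_{3r_0}(x_j)$.

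Every point $x_k$ lies in $W_{2r_0}$, so every ball $B_{3r_0}(x_k)$ and $B_{r_0/5}(x_k)$ involved lies in the $5r_0$-neighborhood of $W$. The curvature hypothesis is stated only on the $3r_0$-neighborhood, so this needs care. I would handle it by applying the lower bound $\mathrm{Ric}\ge -(n-1)R$ only along minimal geodesics from $x_k$ of length $\le 3r_0$. Alternatively, I would note that the hypothesis should be read as covering these balls, for instance by shrinking radii. If that fails, I would build the cover with separation $r_0/5$ and centers only in the $2r_0$-neighborhood, and check that all relevant balls stay inside the $3r_0$-neighborhood. Getting this bookkeeping right is the expected obstacle.

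Granting Ricci $\ge -(n-1)R$ on the relevant balls, Bishop--Gromov relative volume comparison gives
\[
\frac{|B_{3r_0}(x_k)|}{|B_{r_0/5}(x_k)|}\le \frac{V_R(3r_0)}{V_R(r_0/5)}=:D(n,r_0,R),
\]
where $V_R(r)$ is the volume of the $r$-ball in the space form of curvature $-R$. For $k\in J$ we also have $B_{3r_0}(x_j)\subset B_{5r_0}(x_k)$. Using the ratio with $5r_0$ in place of $3r_0$, we get $|B_{r_0/5}(x_k)|\ge D'^{-1}|B_{3r_0}(x_j)|$ with $D'=V_R(5r_0)/V_R(r_0/5)$. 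Summing over the disjoint balls inside $B_{3r_0}(x_j)$ yields $\#J\le D'$. This gives (2) with $N=D'$, which depends only on $n,r_0,R$.

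For the finiteness claim in Step 1, the same comparison gives $|B_{r_0/5}(x_k)|\ge c\,|B_{\mathrm{diam}}(x_k)|$ for a fixed compact neighborhood. Alternatively, finiteness follows directly from compactness, since a sequence of points with separation $2r_0/5$ has no convergent subsequence.
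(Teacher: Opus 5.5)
Your selection step (a maximal $2r_0/5$-separated set in the $2r_0$-neighborhood) is the Vitali selection the paper invokes. Your counting strategy, packing disjoint $r_0/5$-balls into $\mathring B_{3r_0}(x_j)$, is also the same. The difference is the volume comparison.

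The paper uses absolute bounds, $|\mathring B_{3r_0}(x_j)|\le \mathrm{Vol}_{-R}(3r_0)$ and $|\mathring B_{r_0/5}(x_l)|\ge \mathrm{Vol}_{R}(r_0/5)$. The second is a G\"unther-type bound, so it tacitly uses two things: an upper sectional curvature bound, which is only added in Proposition~\ref{tesslation}, and $\mathrm{inj}_{x_l}\ge r_0/5$ at centers that need not lie in $W$. Your relative Bishop--Gromov comparison centered at $x_k$ gives $N=\mathrm{Vol}_{-R}(5r_0)/\mathrm{Vol}_{-R}(r_0/5)$ in the paper's notation. It needs only $\mathrm{Ric}\ge -(n-1)R$ and no injectivity radius at all, which is cleaner. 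The cost is a larger region on which that bound must hold.

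That region is where your bookkeeping is off. Since $x_k$ lies in the $2r_0$-neighborhood and you compare out to radius $5r_0$, you need the Ricci lower bound on the $7r_0$-neighborhood of $W$, not the $5r_0$-neighborhood. Your suggested fixes do not close this:
\begin{enumerate}
\item Restricting to minimal geodesics of length $\le 3r_0$ cannot support a comparison at radius $5r_0$.
\item Your third option is exactly the construction you already have.
\end{enumerate}
You should not expect to close it under the literal hypothesis (ii) anyway. The paper's proof also leaves the $3r_0$-neighborhood, since $\mathring B_{3r_0}(x_j)$ with $x_j$ in the $2r_0$-neighborhood reaches the $5r_0$-neighborhood.

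In the only place the lemma is used (\S\ref{Vor}), $r_0=1/8$ and the curvature operator is bounded by $1$ on the $1$-neighborhood of $W=O_\lambda$, i.e.\ the $8r_0$-neighborhood. So your requirement is met there. The honest fix is simply to state that the Ricci lower bound is needed on the $7r_0$-neighborhood.
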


\def\Vol{\mathrm{Vol}}

\begin{proof}
The existence of a covering satisfying \emph{(1)} follows from the Vitali
covering lemma (cf.\ \cite[\S1.3]{fanghua2003geometric}). Fix such a covering.
If $\mathring{B}_{r_0}(x_l)$ intersects $\mathring{B}_{r_0}(x_j)$, then
$d(x_j,x_l)\le 2r_0$. Hence
$
\sqcup_{\{\,l:\,\mathring{B}_{r_0}(x_l)\cap \mathring{B}_{r_0}(x_j)\neq\emptyset\,\}}
\mathring{B}_{5^{-1}r_0}(x_l)
\subset \mathring{B}_{3r_0}(x_j).
$
By the Bishop--Gromov volume comparison theorem, it follows that one may take
$
N=\frac{\Vol_{-R}(3r_0)}{\Vol_{R}\big(5^{-1}r_0\big)}.
$
Here, for any $k\in\R$ and $r>0$, $\Vol_k(r)$ denotes the volume of the geodesic
ball of radius $r$ in the space form $M^n_k$.
\end{proof}

Fix a covering $\{ \mathring{B}_{ r_0}(x_j) \}_{j=1}^{I}$ of \textbf{$2r_0$-neighborhood} of $W$ satisfying \Cref{Vitali covering0}. Mimicking the construction of Voronoi cells, we now construct a ``tessellation’’  as follows:
\[
\tQ_j := \bigl\{ x \in {B_{r_0}(x_j)} : d(x, x_j) \leq d(x, x_l), \ l \neq j \bigr\}, 1\leq j\leq I.
\]
Here $d$ denotes the distance with respect to the metric $g$. See the figure below for the case $I=2$, where the distance between the two
centers is smaller than $2r_0$.

\begin{center}
\begin{tikzpicture}

    \path[name path=circle1] (0,0) circle (1);
    \path[name path=circle2] (1.2,0) circle (1);

    \path [name intersections={of=circle1 and circle2, by={A, B}}];

    \filldraw[fill=gray!40, thick] 
        let \p1 = ($(A)-(0,0)$), \n1 = {atan2(\y1,\x1)},
            \p2 = ($(B)-(0,0)$), \n2 = {atan2(\y2,\x2)},
            \p3 = ($(B)-(1.2,0)$), \n3 = {atan2(\y3,\x3)},
            \p4 = ($(A)-(1.2,0)$), \n4 = {atan2(\y4,\x4)}
        in
        (A) arc (\n1:\n2+360:1) -- 
        (B) arc (\n3:\n4:1) -- cycle;

    \draw[thick] (A) -- (B);
\end{tikzpicture}
\end{center}
The statement below is easy to check.
 A proof is included in $\S$\ref{appendix} for the reader's convenience.
\begin{prop}\label{tesslation}
 If we further assume that the absolute value of the sectional curvature on the $3r_0$-neighborhood of $W$
    is bounded above by $R$, and $4r_0\leq \pi/\sqrt{R}$.
Then the tessellation $\{\tQ_j\}_{j=1}^{J}$ enjoys the following properties:
\begin{enumerate}[(1)]
    \item The interiors of any two distinct $\tQ_i$ and $\tQ_j$ are disjoint.
    \item We have $B_{5^{-1}r_0}(x_j)\subset \tQ_j\subset {B_{r_0}(x_j)}.$  Moreover, there exist constants $c_4,c_5>0$ depending \textbf{ only } on $n$ and $R$ such that
    \[
    c_4r_0^n \;\le\; |\tQ_j| \;\le\; c_5r_0^n.
    \]
    \item $\tQ_j\in\Lip(L_1,L_2,r_1)$ for some $L_1,L_2$ and $r_1$ depending \textbf{only} on $n,R$ and $r_0$. 
\item\label{tess item 4} $\tQ_j\in T(c,\epsilon_0)$ (see \Cref{tube}) for some $c, \epsilon_0$ depending only on $n$, $R$ and $r_0$.
\item $\tQ_j\in G(c_1)$ for some constants $c_1$ depending only on $n$, $R$ and $r_0$.
\end{enumerate}
\end{prop}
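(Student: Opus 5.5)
We work entirely inside normal coordinates. Since $\mathrm{inj}\ge 3r_0$ on $W$ and every $x_j$ lies within $r_0$ of the $2r_0$-neighborhood of $W$, the point is that $\exp_{x_j}$ is a diffeomorphism on $B_{2r_0}(x_j)$. (Strictly, the injectivity hypothesis is only stated on $W$. I would either assume $x_j$ can be chosen so that this holds, or use the curvature bound together with $4r_0\le\pi/\sqrt R$ and a Klingenberg-type argument to get a lower bound on $\mathrm{inj}$ near $W$.) On $B_{2r_0}(x_j)$ Rauch comparison gives a bi-Lipschitz bound for $\exp_{x_j}$ with constants depending only on $n,R,r_0$.

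\textbf{Items (1) and (2).} For (1), $\tQ_i\cap\tQ_j\subset\{d(\cdot,x_i)=d(\cdot,x_j)\}$. This bisector set has empty interior: near any point of it, $d(\cdot,x_i)-d(\cdot,x_j)$ is smooth with nonvanishing gradient, since the two unit radial vectors differ there (they are distinct because geodesics from distinct points are not tangent at $y$ with equal length within injectivity range). For (2), take $y\in B_{r_0/5}(x_j)$. Disjointness of the $r_0/5$ balls gives $d(x_j,x_l)\ge 2r_0/5$, so $d(y,x_l)\ge r_0/5\ge d(y,x_j)$ and $y\in\tQ_j$. The inclusion $\tQ_j\subset B_{r_0}(x_j)$ holds by definition. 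The volume bounds then follow from Bishop–Gromov and Günther comparison on balls of radius $r_0/5$ and $r_0$.

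\textbf{Item (3), the main obstacle.} $\partial\tQ_j$ consists of pieces of the sphere $\partial B_{r_0}(x_j)$ and of bisectors $\{f_l:=d(\cdot,x_j)-d(\cdot,x_l)=0\}$, over at most $N$ indices $l$ (by \Cref{Vitali covering0}). The plan is to show $\tQ_j$ is star-shaped in a quantitative sense with respect to $x_j$. Along a radial geodesic $\gamma$ from $x_j$, the angle between $\gamma'$ and $\nabla d(\cdot,x_l)$ is controlled by Toponogov/hinge comparison, which is valid because $4r_0\le\pi/\sqrt R$ keeps us below the conjugate radius. I want to show that $\partial_s f_l(\gamma(s))=1-\langle\gamma',\nabla d_l\rangle$ is bounded below by a constant $\kappa(n,R,r_0)>0$ wherever $f_l\approx 0$ and $d(\cdot,x_j)\ge r_0/5$. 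Since $d(x_j,x_l)\ge 2r_0/5$, the comparison triangle has a definite angle, and this should produce such a $\kappa$. Given this, each radial ray exits $\tQ_j$ exactly once and transversally with uniform angle. Hence in polar normal coordinates $\partial\tQ_j=\{r=\rho(\theta)\}$, where $\rho$ is a minimum of finitely many functions, each uniformly Lipschitz by the implicit function theorem applied to $f_l$ and to $d(\cdot,x_j)-r_0$. A Lipschitz radial graph over the sphere with $\rho\ge r_0/5$ gives local Lipschitz charts with constants $L_1,L_2,r_1$ depending only on $n,R,r_0$. Most of the care goes into making the transversality estimate uniform, including where the sphere meets a bisector.

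\textbf{Items (4) and (5).} With the radial-graph description in hand, $\{x\in\tQ_j:d(x,\partial\tQ_j)\le\epsilon\}$ lies in $\{\rho(\theta)-C\epsilon\le r\le\rho(\theta)\}$, with $C$ controlled by the Lipschitz constant. Its volume is therefore $\le C'\epsilon r_0^{n-1}\le c\,\epsilon|\tQ_j|^{(n-1)/n}$ by (2), which is (4). For (5), \Cref{lip is cone} gives the uniform cone condition. The sectional curvature and injectivity bounds put $\tQ_j$ in $\WBG$, so \Cref{Cone implies SObolev} gives $S(C_S)$. Since $\tQ_j$ is bounded, \Cref{Varo} then yields $G(c_1)$ with $c_1$ depending only on $n,R,r_0$.
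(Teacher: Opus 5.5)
Your proof is correct, and items (1), (2) and (5) go as in the paper; the difference is in how you organize (3) and (4).

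The key estimate is the same in both. At a bisector point, your $\partial_s f_l=1-\langle\gamma',\nabla d_l\rangle$ equals $1-\cos\angle(u_j,u_l)$. This is exactly the paper's quantity $g(\nabla f_{lj}(x_*),\nu)$ in \eqref{nonvanishing derivative}, since there $f_{lj}=-f_l$ and $\nu=-\gamma'$. Both of you bound it below by Toponogov, using the separation $d(x_j,x_l)\ge 2r_0/5$ of the centers.

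The paper uses this estimate locally. At each $x_*\in\partial\tQ_j$ it works in normal coordinates at $x_*$ with $e_n$ pointing to $x_j$ and applies the implicit function theorem to each $f_{lj}$. It then needs Hessian comparison for the distance functions to control the Taylor remainder, which is what makes $r_1$ and $L_2$ uniform.

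You use it globally. Since $\partial_s f_l\ge 0$ everywhere by Cauchy--Schwarz (worth stating explicitly), $s\mapsto f_l(\gamma(s))$ crosses zero at most once along each radial geodesic from $x_j$. Hence $\tQ_j$ is star-shaped about $x_j$ and equals $\{r\le\rho(\theta)\}$, with $\rho=\min\{r_0,\min_l\rho_l\}$ uniformly Lipschitz and $r_0/5\le\rho\le r_0$. What your route buys:
\begin{enumerate}[(a)]
\item It needs only first-order (Rauch) control of $\exp_{x_j}$ plus transversality on the zero sets.
\item The corners where the sphere meets bisectors are handled automatically by the minimum.
\item The chart construction reduces to a Euclidean fact about star-shaped domains with Lipschitz radial function.
\item It pays off in (4): integrating over the layer $\{\rho-C\epsilon\le r\le\rho\}$ in polar coordinates is more self-contained than the paper's route through area bounds for the bisector pieces $\Sigma_{jl}$ and their tubes. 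Bounding that area implicitly requires a graph description of $\Sigma_{jl}$ like yours anyway.
\end{enumerate}

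The price is that you need $\exp_{x_j}$ to be a diffeomorphism on a ball of radius somewhat larger than $r_0$ about each center. The injectivity caveat you raise is genuine, but it is not specific to your route. The paper's argument also needs $d(\cdot,x_l)$ to be smooth with Hessian bounds on $\{r_0/5<d(\cdot,x_l)<2r_0\}$, i.e.\ injectivity control at the centers. Hypothesis (i), stated only on $W$, does not literally provide this.
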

We now construct a ``{tessellation}'' of $X$ based on the ``tessellation'' of $2r_0$-neighborhood of $W$ above. A naive approach would be to consider
$$
\tQ_0=\overline{\,X\setminus \cup_{j=1}^I \tQ_j\,}
$$ together with $\tQ_j,1\leq j\leq I.$
However, in  this construction, $\tQ_0$ is not in general a domain with a well-behaved boundary. Indeed, the situation illustrated below may occur: after removing the three gray balls indicated in the figure, the remaining set fails to be a Lipschitz domain and does not satisfy any uniform  interior cone condition.

\begin{center}
\begin{tikzpicture}
  \def\r{0.7} 
  \def\gray{gray!30} 

  \coordinate (A) at (0,0);
  \coordinate (B) at (2*\r,0);
  \coordinate (C) at (\r,{sqrt(3)*\r});

  \begin{scope}[shift={(-\r,{-sqrt(3)*\r/3})}]

    \filldraw[fill=\gray, draw=black, thick] (A) circle (\r);
    \filldraw[fill=\gray, draw=black, thick] (B) circle (\r);
    \filldraw[fill=\gray, draw=black, thick] (C) circle (\r);

  \end{scope}

\end{tikzpicture} 
\end{center}

We now carry out the following construction instead.
To simplify the explanation, we may assume that there exists $J$ such that
\[
\tilde Q_j \cap \tilde Q_0 \neq \emptyset
\quad \text{if and only if} \quad j>J.
\]

We then set
\[
D_0 := \tilde Q_0 \cup \big(\cup_{j>J} \tilde Q_j\big),
\qquad
Q_j := \tilde Q_j,\; 1\le j\le J.
\]
We have the following result.
\begin{prop}\label{properties of tessellation0}
Under the same assumptions as in \Cref{tesslation},
\begin{enumerate}[(1)]
    \item
    $X=D_0\cup(\cup_{j=1}^{J} Q_j)$, and their interior are disjoint.
    
    \item
    Each $Q_j$, $1\le j\le J$, satisfies all the properties listed in
    \Cref{tesslation}.
    
    \item
    $D_0\subset X\setminus W $ consists of domains belonging to
    $\Cone(L,\rho,\varepsilon_0,\theta)$ for some constants
    $L,\rho,\varepsilon_0,\theta$ depending only on $n$, $R$, and $r_0$.
\end{enumerate}
\end{prop}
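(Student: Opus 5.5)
Items (1) and (2) are bookkeeping, and item (3) carries the real content. For (1), we first check that $X=\tQ_0\cup\bigcup_{j=1}^I\tQ_j$. This is immediate from the definition $\tQ_0=\overline{X\setminus\cup_{j=1}^I\tQ_j}$. Regrouping the pieces with $j>J$ into $D_0$ gives $X=D_0\cup\bigcup_{j\le J}Q_j$. For disjointness of interiors, item (1) of \Cref{tesslation} handles pairs $\tQ_i,\tQ_j$ with $i,j\ge1$. The interior of $\tQ_0$ lies in the complement of the closed set $\cup_{j\ge1}\tQ_j$, which is closed because there are finitely many pieces and each is closed. Hence $\mathring Q_j\cap\mathring D_0=\emptyset$ for $j\le J$, since such $Q_j$ meets $D_0$ only along boundaries of the $\tQ_k$, $k>J$, and $\tQ_0$. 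Item (2) is tautological, because $Q_j=\tQ_j$ for $1\le j\le J$.

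The inclusion $D_0\subset X\setminus W$ in (3) needs a distance argument, and I expect it to hold only up to boundary, i.e. $D_0\cap W$ is at most boundary-null. The pieces $\tQ_j$ cover the $2r_0$-neighborhood of $W$, so $\tQ_0$ lies outside the open $2r_0$-neighborhood of $W$. Every $\tQ_j$ with $j>J$ meets $\tQ_0$ and has diameter at most $2r_0$, since $\tQ_j\subset B_{r_0}(x_j)$. Therefore each such $\tQ_j$ lies at distance at least $2r_0-2r_0\ge 0$ from $W$. This margin is too tight, so I would sharpen it. Points of $\tQ_0$ near $\tQ_j$ lie at distance $\ge2r_0$ from $W$, and $\tQ_j\subset B_{r_0}(x_j)$ with $x_j$ within $r_0$ of such a point. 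Hence every point of $\tQ_j$ is within $2r_0$ of a point at distance $\ge 2r_0$ from $W$, so its distance to $W$ is $\ge 0$. I would then use the strict inequality coming from the open ball $\mathring B_{r_0}(x_j)$ and the fact that $W$ is open, which gives $\tQ_j\cap W=\emptyset$ except on a negligible set. If needed, I would shrink the covering radius slightly, replacing $2r_0$ by a margin such as $\tfrac{3}{2}r_0$ in the argument, which only changes constants.

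The main step is the uniform interior cone condition for $D_0$. Fix $x\in\partial D_0$. Such an $x$ lies on $\partial\tQ_j$ for some $j\le J$ that is adjacent to $D_0$. I would first show that $x$ has a neighborhood $B_\rho(x)$, with $\rho$ a small multiple of $r_0$, in which $D_0$ coincides with the complement of $\bigcup_{j\le J}\tQ_j$. Only boundedly many $\tQ_j$ meet this neighborhood, at most $N$ by \Cref{Vitali covering0}. Locally, then, $\overline{D_0}$ is the complement of a union of at most $N$ uniformly Lipschitz Voronoi cells. These cells fit together without gaps: near $x$, the set $\bigcup_{j\le J}\tQ_j$ together with the neighbouring cells $\tQ_k$, $k>J$, forms a Voronoi tessellation of a ball. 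Consequently $D_0$ near $x$ is a union of Voronoi cells $\tQ_k$ ($k>J$), possibly together with $\tQ_0$ glued along full faces.

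For the cone itself, I would work in normal coordinates at $x$. These are bi-Lipschitz with a constant depending on $n,R,r_0$, thanks to the curvature bound and $4r_0\le\pi/\sqrt R$. Choose one cell $\tQ_k\subset D_0$ containing $x$. By item (3) of \Cref{tesslation} and \Cref{lip is cone}, $\tQ_k$ contains a uniform cone at $x$, and since $\tQ_k\subset D_0$ this cone lies in $D_0$. The difficulty is that the cone must sit inside $D_0$ for all base points $y\in B_{\rho/2}(x)\cap\overline{D_0}$ in a single chart with a fixed direction. I would handle this by choosing $\rho$ much smaller than the Lipschitz scale $r_1$ of the cells. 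Then, inside $B_\rho(x)$, $D_0$ is a finite union of at most $N$ cells, each the intersection of a ball with half-space-like regions (bisectors) of bounded Lipschitz constant. I would take the cone direction from the cell $\tQ_k$ at $x$. Where the direction fails, I would use the fact that each base point $y$ lies in some cell of $D_0$ with its own uniform cone, and accept a finite family of directions. That is why the conclusion says ``consists of domains'' belonging to $\Cone$, allowing each component or local piece to be treated separately. I expect this uniform-cone step, in particular dealing with the pieces of $\tQ_0$ near points where several bisectors meet, to be the main obstacle. There I would argue that near such points $\tQ_0$ is excluded, since it stays at distance comparable to $r_0$ from $\bigcup_{j\le J}\tQ_j$.
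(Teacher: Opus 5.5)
For item (3) you use the same key idea as the paper: a point of $\partial D_0$ lies in a cell $\tilde{Q}_k\subset D_0$ with $k>J$, and the uniform cone of that cell (item (3) of Proposition~\ref{tesslation} with Proposition~\ref{lip is cone}) lies in $D_0$. Two steps of your write-up do not close, however.

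\textbf{The inclusion $D_0\subset X\setminus W$.} It holds exactly. You stop at ``distance $\ge 0$'' and retreat to ``up to a null set'' or to changing the margin. Neither proves the statement, and altering the covering would change $J$, hence $D_0$. The missing ingredient is the openness of $W$.
\begin{itemize}
\item Every point of $X\setminus\bigcup_i\tilde{Q}_i$ lies outside the open $2r_0$-neighbourhood of $W$. By continuity, $d(z,W)\ge 2r_0$ for all $z\in\tilde{Q}_0$. In particular $\tilde{Q}_0\cap W=\emptyset$.
\item Given $y\in\tilde{Q}_k$ with $k>J$, pick $z\in\tilde{Q}_k\cap\tilde{Q}_0$; then $d(y,z)\le 2r_0$.
\item If $y$ were in $W$, some $\mathring{B}_\epsilon(y)$ would lie in $W$. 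Moving from $y$ towards $z$ along a minimizing geodesic gives $d(z,W)\le\max\{d(y,z)-\epsilon/2,0\}<2r_0$, a contradiction.
\end{itemize}
This is precisely the paper's argument: $\tilde{Q}_0$ is $2r_0$-far from $W$, and $D_0$ lies within $2r_0$ of $\tilde{Q}_0$. The paper writes a strict inequality, but it is the openness of $W$ that makes it work.

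\textbf{The cone step.} That a boundary point $x$ of $D_0$ lies in some $\tilde{Q}_k$ with $k>J$, and not only in $\tilde{Q}_0$, is not a metric fact. It is the whole purpose of the definition of $D_0$:
\begin{itemize}
\item Since $X=D_0\cup\bigcup_{j\le J}Q_j$ with closed pieces, $x$ lies in some $\tilde{Q}_j$ with $j\le J$.
\item By the definition of $J$, such cells do not meet $\tilde{Q}_0$, so $x\notin\tilde{Q}_0$.
\item Hence $x\in\tilde{Q}_k$ for some $k>J$.
\end{itemize}
You replace this with the claim that $\tilde{Q}_0$ stays at distance comparable to $r_0$ from $\bigcup_{j\le J}\tilde{Q}_j$. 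Nothing in the construction provides that; disjointness gives only a positive, configuration-dependent distance. Drop the claim.

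\textbf{The single-direction issue.} Your worry here is legitimate. Definition~\ref{defn cone} asks for one chart and one cone serving all $y\in B_{\rho/2}(x)\cap\bar{D}_0$. Suppose, in dimension two, two cells of $D_0$ and two cells $\tilde{Q}_{j_1},\tilde{Q}_{j_2}$ with $j_1,j_2\le J$ meet at a point in alternating cyclic order; the construction does not forbid this. Then $D_0$ is locally a bow-tie and no single direction works. The paper's one-line argument does not address this. What it, and your ``finite family of directions'', really yields is a pointwise cone property. That is all Proposition~\ref{Cone implies SObolev} uses, through Adams' pointwise estimate. Say so explicitly rather than reading ``consists of domains'' as ``local pieces'', which is not what the statement means.

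\textbf{Points of $\tilde{Q}_0$.} Without your uniform-distance claim, points of $\tilde{Q}_0$ close to $\bigcup_{j\le J}\tilde{Q}_j$ are not covered by ``each base point lies in a cell with its own uniform cone''. $\tilde{Q}_0$ is not a Voronoi cell and may contain thin wedges between nearly tangent balls. There one has to produce a cone reaching into the neighbouring cells $\tilde{Q}_k$, $k>J$. The paper is silent on this as well.
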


\begin{proof}
Only item~(3) requires justification. Since $\{\tQ_j\}_{j=1}^I$ covers the $2r_0$-neighborhood of $W$, we have
$\mathrm{dist}(\tQ_0,W)>2r_0$. Moreover, by the construction of $D_0$, it is contained
in the $2r_0$-neighborhood of $\tQ_0$. Hence
$
D_0\subset X\setminus W.
$
By construction of $D_0$, each  $x\in \partial D_0$ belongs to some
$\tilde Q_j \subset D_0$ with $j> J$.
Since each $\tilde Q_j$ admits a uniform interior cone by \Cref{lip is cone} and item (3) in \Cref{tesslation}, the same
uniform interior cone condition holds for $D_0$.
\end{proof}

\def\tQ{\tilde{Q}}

\subsection{Heat kernel expansion remainder estimate}\label{sec of heat expansion remainder}

In this subsection, we derive a heat kernel expansion remainder estimate (\Cref{heat remainder estimate}).

\def\BG{{\mathrm{BG}}}
\def\K{{\mathcal{K}}}
\begin{defn}[Bounded geometry $BG(\K,r_0)$]
    Let
$U \subset X$ be a bounded domain. We say $U$ has bounded geometry, write $U\in \BG(\K,r_0)$,
if the norm of the curvature operator, as well as that of its first two
    covariant derivatives, is bounded above on  $U$ by some constant
    $\mathcal{K}>0$.
    Moreover, the injectivity radius at every $x \in U$ is  bounded
    below by a constant $r_0>0$.
\end{defn}
Assume $U\in\BG(\K,r_0)$ for some constant $\K$ and $r_0$.
Let $H := \Delta + V$ for some potential $0\leq V \in C(X)$, and let
$K_{H}$ denote either the Dirichlet or Neumann heat kernel of $H$ on $U$.
Let $k^0_{H}$ be an approximation of the heat kernel $K_{H}$, given by
\[
k^0_{H}(t,x,y)
=
(4\pi t)^{-\n}
e^{-t V(x)}
\exp\!\Big(-\frac{d^2(x,y)}{4t}\Big)
G^{-\frac14}(x,y),
\qquad x,y \in U.
\]
Here $G := \det(g_{ij})$, where $g_{ij}$ is the metric expressed in normal
coordinates centered at $x$  and $d$ denotes the distance induced by $g$.

\begin{prop}\label{Remainder R}
Let
\[
R(t,x,y) := (\partial_t + H) k^0_{H}(t,x,y).
\]
Then near the diagonal,
\[
R(t,x,y)
=
(4\pi t)^{-\n}
e^{-t V(x)}
\exp\!\Big(-\frac{d^2(x,y)}{4t}\Big)
\Big(
\Delta G^{-1/4}
+
\big(V(y)-V(x)\big)
\Big).
\]
Here $H$ and $\Delta$ act on the $y$-variable.
\end{prop}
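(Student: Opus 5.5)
The plan is a direct computation in geodesic normal coordinates centered at $x$. Near the diagonal, $y$ lies inside the injectivity radius of $x$, so $r:=d(x,y)$ is smooth away from $y=x$ and $G=\det(g_{ij})$ is a smooth function of $y$. I write
\[
k^0_H(t,x,y)=e^{-tV(x)}\,E(t,r)\,u(y),\qquad E(t,r):=(4\pi t)^{-\n}e^{-\frac{r^2}{4t}},\qquad u:=G^{-1/4}.
\]
Here $x$ is a parameter and $H=\Delta+V$ acts in $y$. The factor $e^{-tV(x)}$ is constant in $y$, and its $t$-derivative contributes $-V(x)\,k^0_H$. The potential contributes $V(y)\,k^0_H$. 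It therefore remains to compute $(\p_t+\Delta)(Eu)$.

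\textbf{Step 1 (Leibniz rule).} With the nonnegative sign convention, $\Delta(Eu)=u\,\Delta E+E\,\Delta u-2\langle\nabla E,\nabla u\rangle$.

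\textbf{Step 2 (Laplacian of a radial function).} In normal coordinates the volume density is $r^{n-1}\sqrt G$ in polar form. Hence for radial $f$,
\[
\Delta f=-\Big(f''+\big(\tfrac{n-1}{r}+\p_r\log\sqrt G\big)f'\Big).
\]
Using $E'=-\frac{r}{2t}E$ and $E''=\big(-\frac1{2t}+\frac{r^2}{4t^2}\big)E$, this gives
\[
\Delta E=E\Big(\frac{n}{2t}-\frac{r^2}{4t^2}+\frac{r}{4t}\,\p_r\log G\Big).
\]
Also $\p_tE=E\big(-\frac{n}{2t}+\frac{r^2}{4t^2}\big)$. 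Adding the two, the Euclidean terms cancel and
\[
(\p_t+\Delta)E=\frac{r}{4t}(\p_r\log G)\,E.
\]

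\textbf{Step 3 (cross term).} Since $E$ is radial, $\nabla E=E'\,\p_r$ by the Gauss lemma. Since $\p_r u=-\tfrac14 G^{-1/4}\p_r\log G$, we get
\[
-2\langle\nabla E,\nabla u\rangle=-2E'\,\p_r u=-\frac{r}{4t}(\p_r\log G)\,E\,u.
\]
This exactly cancels $u\cdot\frac{r}{4t}(\p_r\log G)E$ from Step 2. This cancellation is the reason for choosing the amplitude $G^{-1/4}$.

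Collecting terms gives
\[
R=(4\pi t)^{-\n}e^{-tV(x)}e^{-\frac{d^2(x,y)}{4t}}\Big(\Delta G^{-1/4}+\big(V(y)-V(x)\big)G^{-1/4}\Big).
\]

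I expect no real obstacle. The points needing care are the sign convention for $\Delta$, the use of the Gauss lemma so that radial gradients only pair with $\p_r$, and the smoothness at $y=x$. For the last point, all terms are smooth functions of $y$ (note $r\,\p_r$ is smooth), and the identity extends by continuity.

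One caveat on the stated formula. In the expression above the potential term carries the factor $G^{-1/4}$, and the computation does not remove it. Since $G^{-1/4}=1+O(d^2(x,y))$ near the diagonal, this is harmless for the remainder estimates that follow. However, I would not expect to obtain the statement's bare $\big(V(y)-V(x)\big)$ exactly; the version with the factor $G^{-1/4}$ is what the computation should produce, and I would record that as the precise form of the identity.
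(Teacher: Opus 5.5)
Your computation is correct and is the same direct calculation the paper invokes (the paper only cites \cite[Proposition~3.4]{WeylDY}): the cancellation between $u\,(\partial_t+\Delta)E$ and $-2\langle\nabla E,\nabla u\rangle$ leaves exactly $E\,\Delta G^{-1/4}$. Your caveat is also right: the potential term is really $(V(y)-V(x))\,G^{-1/4}$, so the displayed statement drops this factor. The omission is harmless, because $G^{-1/4}$ is uniformly bounded for $d(x,y)\le r_*$ under the curvature bounds, so the estimate \eqref{remainder} used afterwards is unaffected.
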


\begin{proof}
This follows from a straightforward computation, as in
\cite[Proposition~3.4]{WeylDY}.
\end{proof}
\def\K{{\mathcal{K}}}

Using the Jacobi field equation (as well as its differentiated versions),
it is standard to show that there exist constants
\be\label{r one} r_* = c(\K) < r_0\text{ and }C = C(n,\K)\ee such that
\be\label{derivative of G}
|\Delta G|(x,y) \le C
\quad\text{and}\quad
|\nabla G|(x,y) \le C
\ee
whenever $d(x,y) \le r_*$, for all $x \in U$.
Here all derivatives act on the $y$-variable.

Choose $\phi \in C_c^\infty(\R)$ to be a bump function such that
\[
\phi(s)=1 \ \text{for } |s|\le \tfrac12,
\qquad
\phi(s)=0 \ \text{for } |s|\ge \tfrac34.
\]
Let $\epsilon_0 <\min\{\mathrm{diam}(U), r_*,1\}$ be a fixed positive constant.
Assume that $x \in U$ satisfies $d(x,\partial U) \ge \epsilon_0$.
Set
\[
\varphi(x,y) := \phi\Big(\frac{d(x,y)}{\epsilon_0}\Big),
\]
and consider
\[
K^0_{H}(t,x,y)
:=
\varphi(x,y)\, k^0_{H}(t,x,y)
=
(4\pi t)^{-\n}
e^{-t V(x)}
\exp\!\Big(-\frac{d^2(x,y)}{4t}\Big)
G^{-\frac14}(x,y)\, \varphi(x,y).
\]
By a straightforward computation, we obtain the following result.
\def\tR{{\tilde{R}}}
\begin{prop}\label{tilde R}
Let
\[
\tR(t,x,y) := (\partial_t + H) K^0_{H}(t,x,y).
\]
Then
\[
\tR(t,x,y)
=
\varphi(x,y) R(t,x,y)
+
\Delta \varphi(x,y)\, k^0_{H}(t,x,y)
-
2 \big(\nabla \varphi(x,y), \nabla k^0_{H}(t,x,y)\big).
\]
Here all derivatives act on the $y$-variable.
\end{prop}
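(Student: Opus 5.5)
This is a direct computation with the product rule. Write $K^0_H=\varphi\,k^0_H$, where $\varphi(x,y)=\phi(d(x,y)/\epsilon_0)$ does not depend on $t$. The operator $\partial_t+H=\partial_t+\Delta_y+V(y)$ acts only in the $y$-variable, and the plan is to distribute each of its pieces over the product.

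First, the time derivative and the potential term are multiplicative with respect to $\varphi$. Since $\varphi$ is independent of $t$,
\[
\partial_t(\varphi k^0_H)=\varphi\,\partial_t k^0_H,
\qquad
V(y)\,\varphi k^0_H=\varphi\,V(y)\,k^0_H .
\]

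Second, we use the Leibniz formula for the nonnegative Laplace--Beltrami operator, $\Delta=-\mathrm{div}\,\nabla$:
\[
\Delta(fg)=(\Delta f)\,g+f\,\Delta g-2(\nabla f,\nabla g).
\]
Applied with $f=\varphi$ and $g=k^0_H$, this gives
\[
\Delta(\varphi k^0_H)=\varphi\,\Delta k^0_H+(\Delta\varphi)\,k^0_H-2(\nabla\varphi,\nabla k^0_H).
\]

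Third, we collect terms. Adding the three contributions yields
\[
\tR=\varphi\,(\partial_t+H)k^0_H+(\Delta\varphi)\,k^0_H-2(\nabla\varphi,\nabla k^0_H).
\]
By definition, $(\partial_t+H)k^0_H=R$, which is the first term in the statement.

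The one point that needs care is smoothness. The function $k^0_H$ involves $d^2(x,y)$ and $G^{-1/4}(x,y)$, and these are smooth only where $d(x,y)$ is less than the injectivity radius. The identity only has to be checked on $\supp\varphi\subset\{d(x,y)\le \tfrac34\epsilon_0\}$. There $\epsilon_0<r_*<r_0$, so $d^2$ and $G$ are smooth in $y$, and \Cref{Remainder R} applies to $R$. Off $\supp\varphi$ all three terms vanish together with $K^0_H$. I therefore interpret $\varphi k^0_H$ and the terms involving $\nabla\varphi$ as zero there.

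Once the domain issue is handled, this is the only potential obstacle and it is mild. No step here is genuinely hard; the main thing to watch is the sign convention for $\Delta$.
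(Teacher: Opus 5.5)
Your proposal is correct and is exactly the straightforward product-rule computation the paper invokes without giving further detail. Since $\varphi$ is $t$-independent, the Leibniz rule $\Delta(fg)=f\,\Delta g+g\,\Delta f-2(\nabla f,\nabla g)$ for the nonnegative Laplacian produces the two extra terms, and your smoothness remark is apt because $\varphi$ is constant near $y=x$ and vanishes for $d(x,y)\ge\frac34\epsilon_0$, well inside the injectivity radius.
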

Note that
\begin{itemize}
\item the support of $\Delta\varphi$ and $|\nabla\varphi|$ is contained in
$\{y:d(x,y)>\ep_0/2\}$;
\item if $d(x,y)\ge\ep_0/2$ and $t\in(0,\ep_0^{3})$, then
$
\ep_0^{-2}t^{-2}e^{-\frac{d^2(x,y)}{8t}}\le C
$
for some universal constant $C>0$. Indeed, for $N>2$, let
$S_N:=\sup_{t>0}t^Ne^{-t}$, then
$
\ep_0^{-2}t^{-2}e^{-\frac{d^2(x,y)}{8t}}
\le \ep_0^{-2}t^{-2}e^{-\frac{\ep_0^2}{32t}}
\le S_N\ep_0^{-2}t^{-2}(32t\ep_0^{-2})^N
\le S_N32^N\ep_0^{3(N-2)-2-2N}.
$
Choosing $N=8$,
we may take $C=32^8S_8$.
\end{itemize}
 It follows immediately from \Cref{tilde R} and \Cref{Remainder R} that for some $C=C(n,\K)>0$,
\be\label{remainder}
|\tR|(t,x,y)
\le
C\, t^{-\n}
e^{-\frac{d^2(x,y)}{8t}}
\big(1+|V(x)-V(y)|\big)\chi(x,y),
\qquad
t \in (0,\epsilon_0^3),
\ee
where $\chi(x,y)=1$ if $d(x,y)<\epsilon_0$ and $\chi(x,y)=0$ otherwise.

We have the following estimate.

\begin{thm}\label{heat remainder estimate}
Assume that $U\in G(c_1)\cap\BG(\K,r_0)$ for some constants $c_1,\K,r_0>0$.
Fix $\epsilon_0<\min\{\mathrm{diam}(U),r_*,1\}$ (see \eqref{r one} for the definition of $r_*$). If
\[
t\in\bigl(0,\min\{1,\epsilon_0^3\}\bigr)
\qquad\text{and}\qquad
d(x,\partial U)\ge \epsilon_0,
\]
then
\be\label{remainder estimate eq1}
\bigl|K_{H}(t,x,x)-(4\pi t)^{-\n}e^{-tV(x)}\bigr|
\le
C\,t^{-\n+1}\max\{1, \sup_U(V)\}.
\ee
where $C$ depends only on $c_1,n$, and $\K$.

\end{thm}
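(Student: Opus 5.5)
The plan is the standard Duhamel (parametrix) argument. Since $K^0_H$ is the approximation built above and $\tilde R=(\partial_t+H)K^0_H$, Duhamel's principle on $U$ gives
\[
K^0_H(t,x,y)-K_H(t,x,y)=\int_0^t\!\int_U K_H(t-s,y',y)\,\tilde R(s,x,y')\,dy'\,ds .
\]
Here we use that the boundary terms vanish. Indeed, $K^0_H(t,x,\cdot)$ is supported in $\{d(x,\cdot)<\tfrac34\epsilon_0\}$. Because $d(x,\partial U)\ge\epsilon_0$, this support stays away from $\partial U$, so it satisfies both Dirichlet and Neumann conditions. Moreover, $K^0_H(t,x,\cdot)\to\delta_x$ as $t\to0$, since $G^{-1/4}(x,x)=1$ and $\varphi(x,x)=1$. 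Setting $y=x$ and noting $K^0_H(t,x,x)=(4\pi t)^{-\n}e^{-tV(x)}$, it then suffices to bound the Duhamel integral by $Ct^{-\n+1}\max\{1,\sup_U V\}$.

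For this I would use two inputs. The first is the remainder bound \eqref{remainder}: $|\tilde R(s,x,y')|\le C s^{-\n}e^{-d^2(x,y')/8s}(1+|V(x)-V(y')|)$ for $s<\epsilon_0^3$, and $|V(x)-V(y')|\le\sup_U V$ because $V\ge0$. The second is a Gaussian bound for $K_H$. Since $V\ge0$, the Trotter/Feynman–Kac comparison (or the maximum principle) gives $0\le K_H\le K_\Delta$, where $K_\Delta$ is the corresponding Dirichlet or Neumann heat kernel of $\Delta$ on $U$. The assumption $U\in G(c_1)$ then yields $K_H(t-s,y',x)\le c_1(t-s)^{-\n}e^{-d^2(x,y')/8(t-s)}$ for $t-s<1$. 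Substituting both bounds, the integrand is controlled by
\[
C\max\{1,\sup_U V\}\,(t-s)^{-\n}s^{-\n}\exp\Big(-\frac{d^2}{8(t-s)}-\frac{d^2}{8s}\Big),\qquad d=d(x,y').
\]

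The main step is the spatial integral $\int_{\{d(x,y')<\epsilon_0\}}(\cdots)\,dy'$. Write $\frac1{t-s}+\frac1s=\frac{t}{s(t-s)}$. Since $\epsilon_0<r_*<r_0$ and the curvature is bounded by $\K$, the volume of geodesic balls of radius $\le\epsilon_0$ about $x$ is comparable to the Euclidean volume. The Gaussian integral therefore gives
\[
\int e^{-d^2t/(8s(t-s))}\,dy'\le C\big(s(t-s)/t\big)^{\n}.
\]
This shows the spatial integral is $\le C\max\{1,\sup_U V\}\,t^{-\n}$ for each $s\in(0,t)$. Integrating in $s$ over $(0,t)$ produces the extra factor $t$, which gives \eqref{remainder estimate eq1}. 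All constants depend only on $c_1$, $n$ and $\K$: through \eqref{derivative of G} and the choice $N=8$ in the cutoff estimate, the $\epsilon_0$-dependence is absorbed because $t<\epsilon_0^3$.

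I expect the main obstacle to be justifying the Duhamel formula and the comparison $K_H\le K_\Delta$ rigorously for merely continuous $V$ with Neumann boundary conditions. I would handle this by first establishing the formula for smooth $V$ through the weak formulation, using that $K^0_H(t,x,\cdot)$ is compactly supported in the interior, and then approximating. Apart from this, the argument is routine.
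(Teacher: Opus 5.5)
Your proposal is correct and follows essentially the same route as the paper. You use Duhamel's principle, then the comparison $0\le K_H\le K$ combined with $U\in G(c_1)$, then the cutoff-remainder bound \eqref{remainder}, then the identity $\frac{1}{s}+\frac{1}{t-s}=\frac{t}{s(t-s)}$ together with a curvature-controlled Gaussian volume integral, and finally integration in $s$ to gain the factor $t$. Your added remark that $K^0_H(t,x,\cdot)$ is supported within distance $\tfrac34\epsilon_0$ of $x$, hence away from $\partial U$, is what justifies the Duhamel formula for both Dirichlet and Neumann conditions; the paper leaves this point implicit.
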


\begin{proof}
Let $K$ denote either the Dirichlet or Neumann heat kernel on $U$, then we have, by Duhamel principle,
\be\ba\label{duhamel}
&\quad
|K_{H}(t,x,x)-K_{H}^0(t,x,x)|
\le
\int_0^t \int_U
|K_{H}(s,x,y)|\,
|\tR(t-s,x,y)|\,
\dvol(y)\, ds \\
&\le
\int_0^t \int_U
|K(s,x,y)|\,
|\tR(t-s,x,y)|\,
\dvol(y)\, ds \\
&\le C\max\{1, \sup_U(V)\}\int_0^t \int_{{\{y : d(x,y) < \epsilon_0\}}}
s^{-\n}(t-s)^{-\n}e^{-\frac{d^2(x,y)}{8s}}e^{-\frac{d^2(x,y)}{8(t-s)}}
\dvol(y)\, ds .
\ea\ee
Here the first inequality follows from the Duhamel's principle, the second from 
$0 \le K_{H} \le K$, and the third  from \eqref{remainder}, 
$U\in G(c_1)$.

Note that for $0 < s < t$,
\[
\frac{d^2(x,y)}{s} + \frac{d^2(x,y)}{t-s}
=
\frac{t\, d^2(x,y)}{s(t-s)}.
\]
Moreover, curvature bounds imply that there exists
$C = C(\K,n)$ such that
\[
\int_{\{y : d(x,y) < \epsilon_0\}}
\exp\!\Big(-\frac{d^2(x,y)}{t}\Big)\,
\dvol(y)
\le C t^\n,\forall t>0.
\]
The proposition then follows from \eqref{duhamel} and the fact that $K_{H}^0(t,x,x)={(4\pi t)^{-\n}}e^{-tV(x)}
$.
\end{proof}

\subsection{Quantitative Weyl's law}\label{sec of quantitative weyl law}
To apply the DN bracketing method, we need a quantitative version
of Weyl's law as follows.
\begin{thm}\label{quantative Weyl law}
   Let $U \in \BG(\K,r_0)\cap G(c_1)\cap T(c,\ep_0)$  with
$\ep_0<\min\{\mathrm{diam}(U),r_0,1\}$. Let $0\le V\in C(\bar U)$ and let
$H:=\Delta+V$ be the Schr\"odinger operator with either Dirichlet or Neumann
boundary condition. Let $S$ be a constant such that 
$
S\geq\max\{\sup_{x\in U} V(x),1\}.
$
Let $\N(s,H)$ denote the eigenvalue counting function of $H$. Then for any
$\ep>0$ small enough, there exist constants
$\cc=\cc\big(\ep,n\big),\cc'=\cc'(\ep,\K,r_0,c_1,n),\cc''=\cc''(c,c_1,n)$ such that for any
$s>\cc(2S+\cc)$, one has
\be\ba
\N(s,H)
&\le
\Big(1+\cc''\ep+\cc'Ss^{-1}\Big)(2\pi)^{-n}\omega_n
\int_U (s-V(x))_+^\n\,\dvol_X(x),
\\
\N(s,H)
&\ge
\Big(1-\cc''\ep-\cc'Ss^{-1}\Big)(2\pi)^{-n}\omega_n
\int_{Q_j} (s-V(x))_+^\n\,\dvol_X(x).
\ea\ee

\end{thm}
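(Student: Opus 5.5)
The plan is to apply the quantitative KHL theorem \Cref{KHL2} pointwise to the measures $\mu_x(r):=e_H(r,x,x)$ and $\nu_x(r):=(2\pi)^{-n}\omega_n(r-V(x))_+^{\n}$ for interior points $x$ with $d(x,\partial U)\ge\ep_0$. Near the boundary we will use only a crude heat kernel bound and the tube condition. Integrating $\mu_x(s)$ over $U$ gives $\N(s,H)$ (trace of the spectral projector). The right-hand sides will be integrals of $\nu_x(s)$, possibly over slightly smaller or larger sets. (I read the $Q_j$ in the lower bound as $U$.)

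\emph{Interior step.} By \Cref{ob C} with $C=V(x)\le S$ and $\l_2=\l_1:=2S$ (so $\varepsilon_0=1$), hypotheses \eqref{exponential bound} and \eqref{uniform limit of quotient} hold with $L=L(n)$. By \Cref{heat remainder estimate} and \Cref{rem laplace transformation of local integral density}, for $t<\min\{1,\ep_0^3\}$,
\[
\Big|\int e^{-tr}d\mu_x-\int e^{-tr}d\nu_x\Big|\le C t^{-\n+1}S = C tS\,e^{tV(x)}\int e^{-tr}d\nu_x\le C e\, tS\int e^{-tr}d\nu_x,
\]
using $tV(x)\le tS\le 1$ for $t\le(2S)^{-1}$. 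So \eqref{error estimate for laplace transform} holds with $\b(t)=C'tS$, and we restrict $t\le\min\{(2S)^{-1},\ep_0^3\}$. Redefining $\l_1:=\max\{2S,\ep_0^{-3}\}$, \Cref{KHL2} yields
\[
|\mu_x(s)-\nu_x(s)|\le\big(\ep+\ep L+C_1LC'C_1Ss^{-1}\big)\nu_x(s)\quad\text{for } s\ge C_1\l_1 .
\]
The constant $\ep_0^{-3}$ should be absorbed into $\cc$ or into $\cc' S s^{-1}$ using $S\ge1$. This is why $\cc'$ depends on $\K,r_0$; the threshold $s>\cc(2S+\cc)$ is arranged for this. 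Integrating over $U_{\ep_0}:=\{d(x,\partial U)\ge\ep_0\}$ gives the interior contribution.

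\emph{Boundary layer; main obstacle.} For $x$ with $d(x,\partial U)<\ep_0$, \Cref{lem:spectral-bound-from-heat-kernel} and $U\in G(c_1)$ (with $K_H\le K$) give $\mu_x(s)\le e\,c_1 s^{\n}$. The tube condition then bounds the boundary contribution by $c\,c_1\ep_0|U|^{\frac{n-1}{n}}s^{\n}$. The difficulty is to compare this with $\int_U(s-V)_+^{\n}\ge (s-S)^{\n}|U|\gtrsim s^{\n}|U|$ for $s\ge2S$, and to make it $O(\ep)$ with a constant depending only on $c,c_1,n$. Using a fixed $\ep_0$ fails, so I would instead choose the collar width $\epsilon=\ep|U|^{1/n}$ (shrinking as needed). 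Then $\ep_0$ in the interior step is replaced by this $\epsilon$, and the threshold $\epsilon^{-3}$ enters $\cc$ and $\cc'$. This is where the scaling bookkeeping is delicate, and I expect the parameter choices to need adjusting here. For the lower bound, the same estimate shows that the omitted collar has $\nu$-mass at most $\cc''\ep\int_U\nu_x(s)$. Combining the interior and boundary estimates gives both inequalities.
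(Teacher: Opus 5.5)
Your overall route is the paper's. You apply \Cref{KHL2} pointwise to $\mu_x=e_H(\cdot,x,x)$ and $\nu_x$ at points away from $\partial U$, with \eqref{error estimate for laplace transform} supplied by \Cref{heat remainder estimate} and $\beta(t)=C'St$. On a boundary collar you use the crude bound $e_H(s,x,x)\le e\,c_1 s^{n/2}$ together with the tube condition. Integrating $e_H(s,x,x)$ over $U$ then recovers $\mathcal{N}(s,H)$.

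The gap is exactly the step you leave open. With your collar width $w=\epsilon|U|^{1/n}$, the remainder estimate is available only for $t<w^3$. The Tauberian threshold therefore becomes $\lambda_1\ge\epsilon^{-3}|U|^{-3/n}$. Nothing in your argument bounds $|U|$ from below, so this threshold is not controlled by $(\epsilon,n)$. Absorbing it into $\mathcal{C}'Ss^{-1}$ via the crude bound is a legitimate trick, but it only moves the $|U|$-dependence into $\mathcal{C}'$. (For large $|U|$ the width must also be capped below $\epsilon_0$ and $r_*$, which is harmless.) As written, the proposal does not give constants with the stated dependence.

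The missing observation is that $G(c_1)$ already forces $|U|\ge c_1^{-1}$. The Neumann semigroup on the bounded domain $U$ is conservative: the constant $1$ lies in the form domain with zero energy, as in the proof of \Cref{Varo}. Hence for $t\in(0,1)$ and a.e.\ $x$,
\[
1=\int_U K_N(t,x,y)\,dy\le c_1t^{-n/2}|U| .
\]
Letting $t\uparrow 1$ gives the claim, and then $|U|^{(n-1)/n}\le c_1^{1/n}|U|$.

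With this in hand, take the collar width equal to $\epsilon$ itself, as the paper does: $U^1=\{x\in U: d(x,\partial U)\ge\epsilon\}$ with $\epsilon<\min\{\epsilon_0,r_*\}$. For $s>2S$ the collar contributes at most
\[
e\,c\,c_1^{1+1/n}\epsilon\,s^{n/2}|U|\le e\,c\,c_1^{1+1/n}2^{n/2}\epsilon\int_U(s-V)_+^{n/2}\,\mathrm{dvol}.
\]
Its $\nu$-mass is bounded the same way, with constants depending only on $c,c_1,n$. The interior threshold is $\lambda_1=2S+1+\epsilon^{-3}$, so $s\ge C_1(L,\epsilon)\lambda_1$ follows from $s>\mathcal{C}(2S+\mathcal{C})$ with $\mathcal{C}=\mathcal{C}(\epsilon,n)$.

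Your worry was well founded. The paper's collar estimate passes from $c\,\epsilon|U|^{(n-1)/n}s^{n/2}$ to $C\epsilon|U|s^{n/2}$ without comment. The volume lower bound above is what justifies that step; in the application one can also use $|Q_j|\gtrsim 8^{-n}$ from \Cref{tesslation}.
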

\begin{proof}
Set
\[
U^1:=\{x\in U:d(x,\partial U)\geq \ep\},
\qquad
U^2:=U\setminus U^1.
\]
Let $K_{H}$ (resp. $K$) be the either Dirichlet or Neumann heat kernel for $H$ (resp. $\Delta$).

Using \Cref{lem:spectral-bound-from-heat-kernel}, the fact that $U\in G(c_1)\cap T(c,\ep_0)$, we obtain if $s>2S$, $\ep<\ep_0,$
\begin{equation}\label{Weyl law U2}
\begin{aligned}
&\quad\int_{U^2} e_{H}(s,x,x)\,\dvol_X(x)
\le e \int_{U^2} K_{H}\!\left(s^{-1},x,x\right)\dvol_X(x)
\\&\le e \int_{U^2} K\!\left(s^{-1},x,x\right)\dvol_X(x)
\le Cs^{\n}\ep|U|
\le C'\ep\int_{U}\big(s-V(x)\big)_+^\n\,\dvol_X(x).
\end{aligned}
\end{equation}
The last inequality uses the facts that if $s>2S$ (also note that $S\geq\sup_{x\in U}V(x)$), \be\label{bounded below by half}s\geq s-V(x)\geq s/2.\ee
Similarly, noting \eqref{bounded below by half} and $U\in T(c,\ep_0)$, there exists $C=C(n,c)$, such that
\be\label{integral U is bouned by U1}
\int_{U^2} \big(s- V(x)\big)_+^\n \dvol_X(x)
\le C\ep\int_{U} \big(s- V(x)\big)_+^\n \dvol_X(x).
\ee
Next, we want to apply \Cref{KHL2} to control the integral over $U^1$. 
For this purpose, we need to verify \eqref{exponential bound}, \eqref{uniform limit of quotient}, and
\eqref{error estimate for laplace transform} for
\[
\mu_{x}(s) := e_{H}(s,x,x)
\quad\text{and}\quad
\nu_{x}(s) := (2\pi)^{-n}\omega_n\big(s - V(x)\big)_+^\n.
\]
\paragraph{Verifying \eqref{error estimate for laplace transform}.} For every
$
t < \min\{\big(2S\big)^{-1},1,\ep^{3}\},
$
we obtain
\begin{equation}\label{pointwise weyl law}
\begin{aligned}
&\quad
\Big|
\int_0^\infty e^{-tr}\, d\mu_{x}(r)
-
\int_0^\infty e^{-tr}\, d\nu_{x}(r)
\Big|
=
\left|
K_{H}(t,x,x)
-
(4\pi t)^{-\n} e^{-tV(x)}
\right|
\\
&\le
CSt^{-\n+1}
\le
C'S\, t\,
\Big((4\pi t)^{-\n} e^{-tV(x)}\Big)
=
C'S t
\int_0^\infty e^{-tr}\, d\nu_{x}(r).
\end{aligned}
\end{equation}
Here, for the first inequality, we use \eqref{remainder estimate eq1} in \Cref{heat remainder estimate} .
For the second inequality,  note that,
if
$
t <(2S)^{-1},
$
then
$\label{bounded below by e half}
e^{-tV(x)} \ge e^{-\half}.
$ For the last equality, see \Cref{rem laplace transformation of local integral density}.
\paragraph{Verifying \eqref{exponential bound} and \eqref{uniform limit of quotient}.} For all 
$
t < (2S)^{-1},
$
noting \eqref{bounded below by half}, we have for some $L$ depending only on $n$,
\be\label{verify exponential pointwise}
\int_0^\infty e^{-tr} d\nu_{x}(r)
= (4\pi t)^{-\n} e^{-tV(x)}
\le (4\pi)^{-\n}t^{-\n}\le (2\pi)^{-\n}\big(t^{-1} - V(x)\big)_+^\n
\le L\nu_{x}(t^{-1}).
\ee
Furthermore, by \eqref{bounded below by half}, one can also easily check that whenever $s > 2S$,
\be\label{verify uniform limit pointwise}
\lim_{\tau\to 1} \frac{\nu_{x}(\tau s)}{\nu_{x}(s)} = 1,
\ee
uniformly.
 Fix $\ep>0$, let $\l_1:=2S+1+\ep^{-3}$.
 Let $C_{1}:=C_1(L,\ep)$ be the constant determined in \Cref{KHL2}.
We may apply \Cref{KHL2} to obtain that if $s\geq C_1\l_1$
\begin{equation}\label{pointwise Weyl law on U1}
\left|
e_{H}(s,x,x)
-
\nu_{x}(s)
\right|
\le
(\ep + CSs^{-1})\,
\nu_{x}(s).
\end{equation}
Recall that
$
\nu_{x}(s)
=(2\pi)^{-\n}\omega_n\big(s - V(x)\big)_+^\n,
$ 
by \eqref{pointwise Weyl law on U1}, we have
\begin{equation}\label{Weyl law U1}
\begin{aligned}
&\quad\int_{U^1}
\left| e_{H}(s,x,x)
      - (2\pi)^{-n}\omega_n(s- V(x))_+^\n \right|
\,\dvol_X(x)
\\
&\le
(\ep + C Ss^{-1})(2\pi)^{-n}\omega_n
\int_{U} (s- V(x))_+^\n \dvol_X(x).
\end{aligned}
\end{equation}
The lemma follows immediately from \eqref{Weyl law U2}, \eqref{integral U is bouned by U1} and \eqref{Weyl law U1}.
\end{proof}
\section{Proof of the Main Theorem}\label{proof main}
\subsection{Rescaling of metrics}\label{rescaling}
As discussed in \Cref{idea of local bounded geometry}, we rescale the metric when
estimating $\N(\lambda)$. The criterion $c_\delta(\lambda)\to0$
(i.e.\ \eqref{assumption}) plays two roles. One is that
$c_\delta(\lambda)\to0$ implies $a(\lambda)\to\infty$ in the rescaled sense;
see \eqref{a goes to infty}. The other is to ensure that the rescaled geometric
bounds are uniformly controlled in a neighborhood of $\{ V=\lambda \}$; see
\eqref{rescaled curvature bound} and \eqref{rescaled osc radius}.
This allows us to apply the results in \Cref{local bounded}.

\def\tb{{{r}_\delta(\l)}}
\def\tbs{{\big(\tb\big)^2}}

Consider
\be\label{r delta}
r_\delta(\l):=
    \min\{K_{\delta}(\l)^{-\half},b_\delta(\l)\}.
\ee
Fix $\l\gg1$ and $\delta>0$ sufficiently small temporarily.
 We consider the rescaling of the metric:
\[
g^{\l}:=\big(r_\delta(\l)\big)^{-2} g.
\]
Next, we study how quantities introduced in notation section
\Cref{notation and main result} behave under rescaling.
\def\tN{\tilde{\N}}
\paragraph{Behavior  of $\N$, $\sigma$ and $\Phi$ under rescaling.}
Set $$V^\l:=\big(\tb\big)^2 V \quad\text{and}\quad {H}^\l:=\Delta^\l+V^\l$$ then $H^\l$ is a Schr\"odinger operator on $(M,g^\l)$ (here $\Delta^\l$ is the Laplacian associated to $g^\l$).

Set
\[
\Phi^\l(\mu):=(2 \pi)^{-n}  \omega_{n}\int_M\left(\mu-V^\l(x)\right)^{\n}_+\,\dvol^\l_M(x)\quad\text{and}\quad\sigma^\l(\mu):=\int_{\{V^\l<\mu\}}1\,\dvol^\l_M(x),
\]
where $\dvol^\l_M$ is the volume form induced by $g^\l$.

Set
\be\label{tilde lambda}
\tilde{\l} = \big(\tb\big)^2 \l.
\ee Note that $\Delta^\l=\tbs\Delta$, we have \be H^\l=\tbs H.\ee
Hence we have:
\begin{prop}
Let $\N^\l$ denote the eigenvalue counting function of $H^\l$. 
Then for any $\mu > 0$,
\[
\N(\mu) = \N^\l\!\big(\big(\tb\big)^2 \mu\big),
\qquad
\Phi(\mu) = \Phi^\l\!\big(\tbs \mu\big),
\quad\text{and}\quad
\sigma(\mu) = \sigma^\l\!\big(\tbs \mu\big).
\]
In particular,
we have
\be\label{eigencouting and scaled eigencouting}
\N(\l) = \N^\l(\tilde{\l}),
\ee
and
\be\label{tilde Phi and Phi}
\Phi(\l) = \Phi^\l(\tilde\l)
\quad\text{and}\quad
\sigma(\l) = \sigma^\l(\tilde\l).
\ee
\end{prop}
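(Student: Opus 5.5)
The plan is a direct computation from the definitions, using only how the Laplacian, the volume form and the potential transform under the constant conformal change $g^\l = r^{-2}g$, where we abbreviate $r:=r_\delta(\l)$, a fixed positive constant once $\l$ and $\delta$ are frozen.

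First, the counting function. Under $g^\l=r^{-2}g$ we have $\Delta^\l = r^2\Delta$; since $V^\l=r^2V$, this gives $H^\l = r^2 H$ as operators on $C_c^\infty(M)$. The conditions \eqref{cond-V-2} are invariant under this scaling, because distances scale by $r^{-1}$ and $V^\l\to\infty$ at infinity. Hence $H^\l$ is essentially self-adjoint with discrete spectrum, and its self-adjoint closure equals $r^2$ times the closure of $H$.

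We must also check that the underlying Hilbert spaces are compatible. The measure changes by $\dvol^\l = r^{-n}\dvol$, so $L^2(M,\dvol^\l)$ and $L^2(M,\dvol)$ coincide as sets, with norms differing by the constant factor $r^{-n/2}$. Eigenfunctions are therefore the same, and the eigenvalues of $H^\l$ are exactly $r^2\l_j$, with the same multiplicities. Consequently
\[
\N^\l(r^2\mu)=\#\{j: r^2\l_j<r^2\mu\}=\N(\mu).
\]

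Next, $\sigma$. We have $\{V^\l<r^2\mu\}=\{V<\mu\}$, and the volume form picks up the factor $r^{-n}$, so $\sigma^\l(r^2\mu)=r^{-n}\sigma(\mu)$. This differs from the stated identity $\sigma(\mu)=\sigma^\l(r^2\mu)$ by the constant factor $r^{-n}$. I would flag this: either $\sigma^\l$ is meant with respect to the original volume, or the identity should read $\sigma^\l(r^2\mu)=r^{-n}\sigma(\mu)$. The discrepancy is harmless for the later use, since only ratios of $\sigma$ enter $a(\l)$ and $d_\delta(\l)$.

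For $\Phi$, we compute
\[
(r^2\mu-V^\l)_+^{n/2}\,\dvol^\l = r^{n}(\mu-V)_+^{n/2}\, r^{-n}\dvol = (\mu-V)_+^{n/2}\dvol,
\]
so $\Phi^\l(r^2\mu)=\Phi(\mu)$ exactly. This is consistent with the counting identity, since both sides of the Weyl law are scale invariant.

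Specializing each identity to $\mu=\l$, with $\tilde\l=r^2\l$ as in \eqref{tilde lambda}, gives \eqref{eigencouting and scaled eigencouting} and \eqref{tilde Phi and Phi}, subject to the volume normalization caveat for $\sigma$.

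The only step needing any care is the identification of the self-adjoint realizations, together with the equality of $L^2$ spaces up to a constant factor. Both are immediate because the rescaling factor is constant.
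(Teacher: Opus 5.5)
Your proof is correct and uses the same direct scaling argument as the paper. From $\Delta^\lambda=r_\delta(\lambda)^2\Delta$ you get $H^\lambda=r_\delta(\lambda)^2H$, so the eigenfunctions are unchanged and the eigenvalues are multiplied by $r_\delta(\lambda)^2$; and the factor $r_\delta(\lambda)^{-n}$ in $\mathrm{dvol}^\lambda_M$ exactly cancels the factor $r_\delta(\lambda)^{n}$ from $(r_\delta(\lambda)^2\mu-V^\lambda)_+^{n/2}$, so $\Phi$ is invariant. Your flag on $\sigma$ is also right: the correct relation is $\sigma^\lambda(r_\delta(\lambda)^2\mu)=r_\delta(\lambda)^{-n}\sigma(\mu)$, and this is the scaling the paper actually uses later, e.g.\ $\tilde\lambda^{n/2}|Q_0|^\lambda=\lambda^{n/2}|Q_0|$ in \eqref{Weyl law Q0}. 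One small correction to your remark: $d_\delta(\lambda)$ does not involve a ratio of $\sigma$'s; what saves it is the identity $\tilde\lambda^{n/2}\sigma^\lambda(r_\delta(\lambda)^2 s)=\lambda^{n/2}\sigma(s)$. Consequently, the paper's rescaled characterization of $d^\lambda_\delta(\tilde\lambda)$ should carry $\tilde\lambda^{n/2}$ rather than $\lambda^{n/2}$.
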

\paragraph{Behavior of $a$, $b_\delta$, and $d_\delta$ under rescaling.}
Set
\[
a^\l(\tilde{\l}) := \big(\tb\big)^2 a(\l).
\]

\begin{prop}
We have  \[
{a}^\l(\tilde{\l})=\sup\{s\in[0,\infty):2\,\sigma^\l\bigl(\tilde\l-s\bigr)\geq\sigma^\l(\tilde\l+s)\}.
\] 
and \be\label{a goes to infty}
\lim_{\l\to\infty} {a}^\l(\tilde{\l})=\infty.
\ee
\end{prop}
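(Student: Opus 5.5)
The plan is to verify the scaling identity first and then deduce the limit from \eqref{assumption}.

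\emph{Step 1: the identity.} By the preceding proposition, $\sigma(\mu)=\sigma^\l\big(\tbs\mu\big)$ for every $\mu>0$. Hence, for $s\ge0$,
\[
2\sigma(\l-s)\ge\sigma(\l+s)
\iff
2\sigma^\l\big(\tilde\l-\tbs s\big)\ge\sigma^\l\big(\tilde\l+\tbs s\big).
\]
The substitution $s'=\tbs s$ is an increasing bijection of $[0,\infty)$. So the set of admissible $s'$ for $\sigma^\l$ at level $\tilde\l$ is exactly $\tbs$ times the set of admissible $s$ for $\sigma$ at level $\l$. Taking suprema gives the claimed formula for $a^\l(\tilde\l)=\tbs a(\l)$.

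\emph{Step 2: the limit.} By \eqref{r delta},
\[
a^\l(\tilde\l)=a(\l)\min\{K_\delta(\l)^{-1},\,b_\delta(\l)^2\}.
\]
I will bound each of the two candidates from below by a multiple of $c_\delta(\l)^{-1}$. From \eqref{c delta},
\[
c_\delta(\l)=\frac{1}{a(\l)b_\delta(\l)^2}+\frac{K_\delta(\l)}{a(\l)}.
\]
Both summands are nonnegative, so each is at most $c_\delta(\l)$. This yields
\[
a(\l)b_\delta(\l)^2\ge c_\delta(\l)^{-1},
\qquad
a(\l)K_\delta(\l)^{-1}\ge c_\delta(\l)^{-1}.
\]
When $K_\delta(\l)=0$ the second bound reads as $+\infty$, and then the minimum is simply $b_\delta(\l)^2$. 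In all cases $a^\l(\tilde\l)\ge c_\delta(\l)^{-1}$. By \eqref{assumption}, $c_\delta(\l)\to0$, so $a^\l(\tilde\l)\to\infty$.

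\emph{Main point of care.} The only thing to watch is the degenerate case, where $K_\delta(\l)=0$ or $b_\delta(\l)$ is infinite, for instance with infinite injectivity radius and no oscillation constraint. If $b_\delta(\l)=\infty$, the definition of $r_\delta(\l)$ has to be read with the convention that the minimum is finite, i.e.\ $r_\delta(\l)=K_\delta(\l)^{-1/2}$. Then $c_\delta(\l)=K_\delta(\l)/a(\l)\to0$ still gives the same conclusion. If instead both are infinite, I will need a harmless finite cap on $r_\delta(\l)$ that keeps $a(\l)r_\delta(\l)^2\to\infty$.
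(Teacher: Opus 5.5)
Your proof is correct and is the paper's own one-line argument written out: the identity comes from the level-set scaling $\{V^\lambda<r_\delta(\lambda)^2\mu\}=\{V<\mu\}$ substituted into the definition of $a$. The limit comes from $r_\delta(\lambda)^2a(\lambda)=\min\{a(\lambda)/K_\delta(\lambda),\,a(\lambda)b_\delta(\lambda)^2\}\ge c_\delta(\lambda)^{-1}\to\infty$.

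Two small remarks. First, volumes actually scale by $r_\delta(\lambda)^{-n}$ under $g^\lambda$, so $\sigma^\lambda(r_\delta(\lambda)^2\mu)=r_\delta(\lambda)^{-n}\sigma(\mu)$ rather than $\sigma(\mu)$. This is harmless, because the condition $2\sigma(\lambda-s)\ge\sigma(\lambda+s)$ is unchanged when $\sigma$ is multiplied by a positive constant. Second, the case $b_\delta(\lambda)=\infty$ never arises: on a noncompact $M$, \eqref{cond-V-2} makes the oscillation of $V$ over large balls unbounded, and on a compact $M$ the injectivity radius $\mathrm{inj}_x$ is bounded. So no cap on $r_\delta(\lambda)$ is needed.
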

\begin{proof}
  The first equality follows from \eqref{tilde Phi and Phi} and \eqref{defn of a}, and the second follows from \eqref{assumption} and \eqref{r delta}.  
\end{proof}

Let $$\Omega^\l_\mu:=\{V^\l<\mu\}.$$ For $\tilde\mu:=\tbs \mu$, we have
\[
\Omega^\l_{\tilde\mu}=\Omega_\mu.
\]

Set $d^\l_\delta(\tilde\l):=\tbs d_\delta(\l)$ and $b_\delta^\l(\tilde{\l}):=\big(\tb \big)^{-1}b_\delta(\l)$. It is straightforward to see that:
\begin{prop}
We have
\be
d^\l_\delta(\tilde\l)=\sup\Big\{s\in(0,\infty): \l^{\n} \sigma^\l\big(s\big)
\leq
\delta\, \int_M\left(\tilde\l-V^\l(x)\right)^{\n}_+\,\dvol^\l_M(x)\Big\},
\ee
and
\be\label{rescaled b delta}
b_\delta^\l(\tilde{\l})
=
\sup\Bigl\{
r \in [0,\mathrm{inj}_x^\l) :
\osc_{B^\l_r(x)}(V^\l) \leq \delta^2 a^\l(\tilde\l),\ 
x \in \Omega^\l_{\tilde\l+\delta a^\l(\tilde\l)}
\setminus
\Omega^\l_{d^\l_\delta(\tilde\l)}
\Bigr\},
\ee
where $B^\l_r(x)$ denotes the geodesic ball of radius $r$ centered at $x$
with respect to $g^\l$, and $\mathrm{inj}_x^\l$ is the injectivity radius
at $x$ with respect to $g^\l$.
In particular, for any
$x \in \Omega^\l_{\tilde\l+\delta a^\l(\tilde\l)}
\setminus
\Omega^\l_{d^\l_\delta(\tilde\l)}$,
\be\label{rescaled osc radius}
\mathrm{inj}_x^\l \ge b_\delta^\l(\tilde{\l}) \ge 1.
\ee
\end{prop}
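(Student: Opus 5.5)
The plan is to verify both displayed formulas by tracking how each ingredient transforms under the constant rescaling $g^\l=r_\delta(\l)^{-2}g$. I treat $r:=r_\delta(\l)$ as a fixed positive constant. For the characterization of $d^\l_\delta(\tilde\l)$ I use the relations from the previous proposition, namely $\sigma(\mu)=\sigma^\l(r^2\mu)$ and $\Phi(\mu)=\Phi^\l(r^2\mu)$.

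\textbf{Step 1: the formula for $d^\l_\delta(\tilde\l)$.} Set $I(\l):=\int_M(\l-V)_+^{\n}\dvol_M$. A direct computation gives
\[
\int_M(\tilde\l-V^\l)_+^{\n}\dvol_M^\l=r^{n}\,r^{-n}I(\l)=I(\l),
\]
since $\dvol^\l_M=r^{-n}\dvol_M$ and $(\tilde\l-V^\l)_+^{\n}=r^{n}(\l-V)_+^{\n}$. Substituting $s=r^2 s'$ in the defining inequality of $d_\delta(\l)$ then shows that $s'$ is admissible for $d_\delta(\l)$ exactly when $s=r^2s'$ is admissible in the displayed set, using $\sigma(s')=\sigma^\l(s)$. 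Taking suprema gives $d^\l_\delta(\tilde\l)=r^2d_\delta(\l)$. The prefactor $\l^{\n}$ is left unscaled, matching the stated formula, so the bookkeeping is consistent.

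\textbf{Step 2: the formula for $b^\l_\delta(\tilde\l)$.} I will use three scaling facts. First, $d^\l=r^{-1}d$, so $B^\l_{\rho}(x)=B_{r\rho}(x)$ and $\mathrm{inj}^\l_x=r^{-1}\mathrm{inj}_x$. Second, $\osc_{B^\l_\rho(x)}(V^\l)=r^2\osc_{B_{r\rho}(x)}(V)$. Third, $\delta^2a^\l(\tilde\l)=r^2\delta^2a(\l)$. It also follows that $\Omega^\l_{\tilde\l+\delta a^\l(\tilde\l)}\setminus\Omega^\l_{d^\l_\delta(\tilde\l)}=\Omega_{\l+\delta a(\l)}\setminus\Omega_{d_\delta(\l)}$, because $\Omega^\l_{r^2\mu}=\Omega_\mu$. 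Together these show that $\rho$ is admissible in \eqref{rescaled b delta} if and only if $r\rho$ is admissible in \eqref{osc radius}, which yields $b^\l_\delta(\tilde\l)=r^{-1}b_\delta(\l)$.

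\textbf{Step 3: the inequality \eqref{rescaled osc radius}.} Since $r\le b_\delta(\l)$ by \eqref{r delta}, we get $b^\l_\delta(\tilde\l)\ge1$. The sup in \eqref{osc radius} ranges over radii $r'<\mathrm{inj}_x$ for the relevant $x$, so $b_\delta(\l)\le\mathrm{inj}_x$, and rescaling gives $\mathrm{inj}^\l_x\ge b^\l_\delta(\tilde\l)$.

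The only delicate point I see is reading \eqref{osc radius} as a uniform sup, with the conditions holding for all $x$ in the set. Under that reading the injectivity bound in Step 3 follows directly. Everything else is routine substitution.
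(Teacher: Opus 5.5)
The gap is in Step 1. The identity $\sigma(s')=\sigma^\l(r^2s')$ that you import from the preceding proposition contradicts the scaling $\dvol^\l_M=r^{-n}\dvol_M$ you use one line earlier. Since $\sigma^\l$ is measured with $\dvol^\l_M$, in fact $\sigma^\l(r^2s')=r^{-n}\sigma(s')$. Consequently $s=r^2s'$ lies in the displayed set if and only if $\l^{\n}\sigma(s')\le\delta\, r^{n}\int_M(\l-V)_+^{\n}\dvol_M$. This is not the defining condition of $d_\delta(\l)$ unless $r=1$, and in general it has a different supremum. For example, take $V=\ln|x|$ on $\R^n$: there $\sigma$ is continuous and strictly increasing in the relevant range and $r=b_\delta(\l)\to\infty$, so the supremum is strictly larger than $r^2d_\delta(\l)$. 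So your remark that leaving $\l^{\n}$ unscaled keeps the bookkeeping consistent is backwards: $\l^{\n}\sigma(\cdot)$ is scale invariant only if both factors are rescaled.

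What is true, and what your substitution argument proves once this factor is tracked, is the version with $\tilde\l^{\n}$ in place of $\l^{\n}$. Indeed $\tilde\l^{\n}\sigma^\l(r^2s')=r^{n}\l^{\n}\,r^{-n}\sigma(s')=\l^{\n}\sigma(s')$, and the right-hand side is scale invariant by your computation. Hence the condition $\tilde\l^{\n}\sigma^\l(s)\le\delta\int_M(\tilde\l-V^\l)_+^{\n}\dvol^\l_M$ is equivalent to the defining condition of $d_\delta(\l)$ at $s'=r^{-2}s$, which gives $d^\l_\delta(\tilde\l)=r^2d_\delta(\l)$. This is just the definition of $d_\delta$ applied to $(M,g^\l,V^\l)$ at level $\tilde\l$, parallel to $a^\l$ and $b^\l_\delta$. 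The misprint, together with the missing factor $r^{-n}$ in $\sigma(\mu)=\sigma^\l(\tilde\mu)$, is in the paper itself, which offers only ``straightforward'' here. It is harmless downstream: later estimates such as \eqref{Weyl law Q0} use the correct scaling $\tilde\l^{\n}|Q_0|^\l=\l^{\n}|Q_0|$, and only the definition $d^\l_\delta(\tilde\l)=r^2d_\delta(\l)$ is ever needed. Still, you should prove the corrected formula, or flag the discrepancy, rather than verify the misprinted one with an identity your own computation refutes.

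Your Steps 2 and 3 are correct and are exactly the intended scaling check. They rely only on that definition, on $B^\l_\rho(x)=B_{r\rho}(x)$ and $\mathrm{inj}^\l_x=r^{-1}\mathrm{inj}_x$, on the common factor $r^2$ in the oscillation and in $\delta^2a^\l(\tilde\l)$, and on the uniform reading of \eqref{osc radius}, which is the one the paper uses.
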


\paragraph{Behavior of curvature bounds under rescaling.}
Consider
\[
R^\l_\delta(\tilde\l) := \tbs R_\delta(\l),
\qquad
S^\l_\delta(\tilde\l) := \big(\tb\big)^3 S_\delta(\l),
\qquad
T^\l_\delta(\tilde\l) := \big(\tb\big)^4 T_\delta(\l) \ge 0.
\]
It is straightforward to see that
\begin{prop}
The norm of the curvature operator, as well as the norms of its first and second
covariant derivatives, on the $1$-neighborhood (with respect to $g^\l$) of
\[
\Omega^\l_{\tilde\l + \delta a^\l(\tilde\l)}
\setminus
\Omega^\l_{d^\l_\delta(\tilde\l)}
\]
are bounded above by $R^\l_\delta(\tilde\l)$, $S^\l_\delta(\tilde\l)$, and
$T^\l_\delta(\tilde\l)$, respectively. Here for $r > 0$, by the \emph{$r$-neighborhood} of a set $A \subset M$
(with respect to $g^\l$) we mean the open set
\[
\cup_{x \in A} \mathring{B}^\l_r(x).
\]
Moreover, we have
\be\label{rescaled curvature bound}
R^\l_\delta(\tilde\l),{S^\l_\delta(\tilde\l), T^\l_\delta(\tilde\l)} \le 1.
\ee

\end{prop}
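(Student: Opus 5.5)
The statement is a direct scaling check. Under $g^\l=r_\delta(\l)^{-2}g$, the Riemann curvature tensor viewed as a $(1,3)$-tensor is unchanged, but its operator norm measured with $g^\l$ scales by $r_\delta(\l)^{2}$. By the same reasoning, the norm of the $k$-th covariant derivative of curvature scales by $r_\delta(\l)^{2+k}$; the Levi-Civita connection is invariant under constant rescaling, and each extra covector slot contributes one more factor $r_\delta(\l)$. So pointwise
\[
|\mathrm{Rm}|_{g^\l}=r_\delta(\l)^2|\mathrm{Rm}|_g,\quad
|\nabla\mathrm{Rm}|_{g^\l}=r_\delta(\l)^3|\nabla\mathrm{Rm}|_g,\quad
|\nabla^2\mathrm{Rm}|_{g^\l}=r_\delta(\l)^4|\nabla^2\mathrm{Rm}|_g .
\]

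Next I identify the region. Since $\Omega^\l_{\tilde\mu}=\Omega_\mu$, and $\tilde\l+\delta a^\l(\tilde\l)$, $d^\l_\delta(\tilde\l)$ are the rescalings of $\l+\delta a(\l)$, $d_\delta(\l)$, the set $\Omega^\l_{\tilde\l+\delta a^\l(\tilde\l)}\setminus\Omega^\l_{d^\l_\delta(\tilde\l)}$ equals $\Omega_{\l+\delta a(\l)}\setminus\Omega_{d_\delta(\l)}$. A $g^\l$-ball of radius $1$ is a $g$-ball of radius $r_\delta(\l)\le b_\delta(\l)$. Hence the $1$-neighborhood with respect to $g^\l$ is contained in the $b_\delta(\l)$-neighborhood with respect to $g$, where by definition the $g$-norms are bounded by $R_\delta(\l)$, $S_\delta(\l)$, $T_\delta(\l)$. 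Multiplying by the scaling factors gives the first claim.

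For \eqref{rescaled curvature bound}, I use $r_\delta(\l)\le K_\delta(\l)^{-1/2}$ from \eqref{r delta}, together with $R_\delta\le K_\delta$, $S_\delta^{2/3}\le K_\delta$ and $T_\delta^{1/2}\le K_\delta$. These give
\[
R^\l_\delta\le K_\delta^{-1}R_\delta\le1,\qquad
S^\l_\delta\le K_\delta^{-3/2}S_\delta=\big(K_\delta^{-1}S_\delta^{2/3}\big)^{3/2}\le1,\qquad
T^\l_\delta\le K_\delta^{-2}T_\delta=\big(K_\delta^{-1}T_\delta^{1/2}\big)^2\le1 .
\]

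There is no real obstacle. The one points to check are the scaling exponents of the tensor norms and the degenerate case $K_\delta(\l)=0$. In that case $K_\delta^{-1/2}=\infty$, so $r_\delta=b_\delta$, all curvature vanishes on the region, and the bounds hold trivially.
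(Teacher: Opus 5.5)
Your argument is correct and is the scaling check the paper intends; the paper gives no proof beyond calling the proposition straightforward. The $g^\l$-norms of $\mathrm{Rm},\nabla\mathrm{Rm},\nabla^2\mathrm{Rm}$ acquire the factors $r_\delta(\l)^2,r_\delta(\l)^3,r_\delta(\l)^4$, the $1$-neighborhood for $g^\l$ is the $r_\delta(\l)$-neighborhood for $g$ and so lies inside the $b_\delta(\l)$-neighborhood, and $r_\delta(\l)\le K_\delta(\l)^{-1/2}$ combined with the homogeneous exponents in $K_\delta(\l)$ gives the bound $1$, including in the degenerate case $K_\delta(\l)=0$ that you handle.
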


\subsection{ Voronoi-type tessellation in rescaled space}\label{Vor}
We use the same notation as in \Cref{rescaling}.
Let
\[
O_\lambda
:=
\Omega^\lambda_{\tilde\lambda+\delta a^\lambda(\tilde\lambda)}
\setminus
\Omega^\lambda_{d_\delta^\lambda(\tilde\lambda)}
=
\Omega_{\lambda+\delta a(\lambda)}
\setminus
\Omega_{d_\delta(\lambda)}.
\]

Applying the construction in \Cref{vitali} with
$W=O_\lambda$, $(X,g)=(M,g^\lambda)$, $r_0=8^{-1}$, and $R=1$
(noting \eqref{rescaled curvature bound} and
\eqref{rescaled osc radius}),
we obtain a ``tessellation''
\[
D_0 \cup \big(\cup_{j=1}^{J} Q_j\big)
\]
of $M$ satisfying the properties listed in
\Cref{properties of tessellation0}.

It follows from item~(3) in \Cref{properties of tessellation0} and the intermediate
value theorem for continuous functions that each connected component of $D_0$
is either contained in $\Omega_{d_\delta(\lambda)}(=\{V<d_\delta(\lambda)\})$ or in
$
M \setminus \Omega_{\lambda+\delta a(\lambda)}
=
\{V\ge \lambda+\delta a(\lambda)\}.
$
Accordingly, we further decompose $D_0$ as
\[
Q_0 := D_0 \cap \{V<d_\delta(\lambda)\},
\qquad
Q_{J+1} := D_0 \cap \{V\ge \lambda+\delta a(\lambda)\}.
\]

Then we have:

\def\tQ{\tilde{Q}}

\begin{prop}\label{heat kernle estimate}
For $1\le j\le J$, \be\label{heat kernel estimate on Qj} Q_j\in G(c_1)\ee for some constants $c_1$ depending only on $n$ (associated with  $(M,g^\l)$).

Assume that $\l$ is sufficiently large.  Let $K_0(t,x,y)$ denote either the Dirichlet or Neumann heat kernel on $Q_0$ (with respect to the Beltrami-Laplacian associated with $g^\lambda$). For $t\in(0,\tilde{\lambda}^{-1}]$, there exists $(\lambda,\delta)$-independent constant $c_4$ such that on $Q_0$,
\begin{equation}\label{heat kernel estimate on Q0}
0\;\le\; K_0(t,x,x)
\;\le\; c_4\, t^{-\n}.
\end{equation}
\end{prop}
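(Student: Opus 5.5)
The two claims are handled separately.

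\textbf{First claim.} The estimate \eqref{heat kernel estimate on Qj} for $1\le j\le J$ is essentially bookkeeping. In the rescaled metric $g^\l$ we apply \Cref{tesslation} and \Cref{properties of tessellation0} with $r_0=8^{-1}$ and $R=1$. The hypotheses are satisfied:
\begin{itemize}
\item the injectivity radius on $O_\l$ is at least $1\ge 3r_0$ by \eqref{rescaled osc radius};
\item the curvature operator on the $1$-neighborhood of $O_\l$, which contains the $3r_0$-neighborhood, has norm at most $1$ by \eqref{rescaled curvature bound}, so the sectional and Ricci curvature bounds hold with $R=1$;
\item $4r_0=\tfrac12\le\pi$.
\end{itemize}
Item (5) of \Cref{tesslation} then gives $Q_j\in G(c_1)$ with $c_1$ depending only on $n$, since $R$ and $r_0$ are now fixed numbers. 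One can also see this directly: items (3) and \Cref{lip is cone} give a uniform cone condition, \Cref{Cone implies SObolev} gives a uniform Sobolev constant, and \Cref{Varo} then yields the Gaussian bound.

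\textbf{Second claim, reduction.} By domain monotonicity, the Dirichlet heat kernel on $Q_0$ is dominated by the Neumann one, so it suffices to treat the Neumann kernel. The difficulty is that $Q_0$ may be unbounded in geometry: deep inside $\Omega_{d_\delta(\l)}$ we have no curvature or injectivity control. Only its boundary layer, made of the cells $\tQ_j$ with $j>J$, lies in the controlled region. So we cannot use \Cref{Varo} globally.

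\textbf{Second claim, strategy.} The plan is to exploit that we only need $t\le\tilde\l^{-1}$, and that $\tilde\l\to\infty$ because $\tilde\l\ge a^\l(\tilde\l)\to\infty$ by \eqref{a goes to infty}. We split $x\in Q_0$ into two cases.
\begin{itemize}
\item \emph{Points near $\partial Q_0$,} say within distance $1/8$ of some $\tQ_j$ with $j>J$. There we use Davies--Grigor'yan localization: a local Sobolev inequality on the ball $B_{r_0}(x)\cap Q_0$, which has uniform cone geometry by item (3) of \Cref{properties of tessellation0}, together with local (Moser/Nash-type) ultracontractivity for Neumann kernels on such balls. This gives $K_0(t,x,x)\le Ct^{-n/2}$ for $t\le 1$.
\item \emph{Interior points of $Q_0$,} those not in the controlled layer. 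Here I would try a comparison. Extend the geometry is not possible, so instead bound $K_0(t,x,x)$ by $e\,$times the pointwise counting function.
\end{itemize}

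\textbf{Main obstacle.} I expect the interior points of $Q_0$ to be the main obstacle, and it may be that the statement really only needs the bound where the Neumann boundary matters. As a fallback, I would replace $Q_0$ by a slightly smaller region bounded by controlled cells and use a maximum-principle comparison, absorbing the remainder into the Dirichlet–Neumann error of size $\delta\Phi(\l)$ that $d_\delta$ was designed to control. Concretely, I would first try to show that the Gaussian argument of \Cref{Varo} localizes: a Nash inequality for functions supported in a unit ball of the layer suffices to give the on-diagonal bound there, uniformly in $\l$ and $\delta$.
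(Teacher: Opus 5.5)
Your argument for $1\le j\le J$ is correct and is the paper's (item (5) of \Cref{tesslation} with $r_0=1/8$, $R=1$), and reducing Dirichlet to Neumann on $Q_0$ is fine. The bound on $Q_0$, however, is not proved. You leave the interior points open, and the premise behind that difficulty is false.

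You say that deep inside $\Omega_{d_\delta(\lambda)}$ there is no curvature or injectivity control. But \eqref{assumption} holds at \emph{every} large energy level, not only at $\lambda$. Fix $\delta_0$ with $c_{\delta_0}(\mu)\to0$. Since $d_{\delta_0}(\mu)<\mu$, every point with $V(x)=\mu$ lies in $\Omega_{\mu+\delta_0 a(\mu)}\setminus\Omega_{d_{\delta_0}(\mu)}$. Hence $|R_g|(x)\le R_{\delta_0}(\mu)$ and $\mathrm{inj}_x\ge b_{\delta_0}(\mu)$. Moreover, $c_{\delta_0}(\mu)\to0$ together with $a(\mu)<\mu$ gives $R_{\delta_0}(\mu)=o(\mu)$ and $\mu\,b_{\delta_0}(\mu)^2\to\infty$. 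This is \Cref{tameness}: $|R_g|\le C(V+1)$ and $\mathrm{inj}_x\ge c(V+1)^{-1/2}$ on all of $M$, with $C,c$ independent of $(\lambda,\delta)$. That is the missing idea.

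Once you have it, the proof is short. On $Q_0$ (a bounded set) one has $V<d_\delta(\lambda)<\lambda$. So in the metric $\tilde\lambda g^\lambda=\lambda g$ all of $Q_0$ lies in $\mathrm{WBG}(2C,c/2)$. The interior cone condition from \Cref{properties of tessellation0}(3) survives because $\tilde\lambda>1$. Then \Cref{Cone implies SObolev} and \Cref{Varo} give $\tilde K_0(s,x,x)\le c_1 s^{-n/2}$ for $s\le 1$, where $\tilde K_0$ is the rescaled Neumann kernel. Undoing the scaling, $K_0(t,x,x)=\tilde\lambda^{n/2}\tilde K_0(\tilde\lambda t,x,x)\le c_1 t^{-n/2}$ precisely for $t\le\tilde\lambda^{-1}$. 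This is the mechanism behind the time restriction you sensed should matter. Boundary and interior points are handled at once by one global Sobolev inequality, so your separate local Nash/Moser treatment near $\partial Q_0$, which is only sketched anyway, is not needed.

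Your proposed substitutes do not close the gap. Bounding $K_0(t,x,x)$ by $e$ times the pointwise counting function reverses \Cref{lem:spectral-bound-from-heat-kernel}. It is also circular, since this heat-kernel bound is exactly what is fed into that lemma in \eqref{Weyl law Q0}.

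The fallback of shrinking $Q_0$ and absorbing the rest into the $\delta\Phi(\lambda)$ error changes the statement. It also cannot work without the same input. Whatever Neumann piece covers the interior still needs a bound $\mathcal{N}(\tilde\lambda,\Delta^\lambda_{Q,N})\le C\,\tilde\lambda^{n/2}|Q|^\lambda$ with a uniform constant. Such a bound fails without geometric control. For example, near a thin tube whose cross-section has $g^\lambda$-radius $\epsilon\ll\tilde\lambda^{-1/2}$, the on-diagonal Neumann kernel at $t=\tilde\lambda^{-1}$ is of order $t^{-1/2}\epsilon^{1-n}$, which is much larger than $t^{-n/2}$. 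So control of the interior of $Q_0$ at scale $\lambda^{-1/2}$ (in $g$) must come from the hypothesis, and tameness is how the paper extracts it.
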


To prove \Cref{heat kernle estimate}, we first introduce:
\begin{defn}
   We say $(M,g)$ is $(C,c)$-{tamed by $V\geq0$} if there exist constants $c, C > 0$ such that 
   \[
   |R_g|(x) \leq C\bigl(V(x)+1\bigr) \quad \text{and} \quad 
   \mathrm{inj}_x \geq c\bigl(V(x)+1\bigr)^{-\half},
   \]
   where $R_g$ denotes the curvature operator with respect to $g$.

\end{defn}
Then we have:
\begin{lem}\label{tameness}
$(M,g)$ is tamed by $V$ for some $(\lambda,\delta)$-independent constants $(C,c)$.
\end{lem}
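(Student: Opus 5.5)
The plan is to prove the tameness of the \emph{original} metric $g$ by $V$ pointwise. For each point $x$ with $V(x)$ large, we choose the spectral parameter $\lambda$ equal to $V(x)$, so that $x$ lies in the region where the quantities $R_\delta,b_\delta$ control the geometry. The assumption \eqref{assumption} then converts into the two inequalities. We fix once and for all a small $\delta$ for which \eqref{assumption} holds, and we choose $\lambda_0$ so large that $c_\delta(\lambda)\le 1$ for all $\lambda\ge\lambda_0$.

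\textbf{Step 1: $a(\lambda)\le\lambda$.} Since $V\ge1$, we have $\sigma(\lambda-s)=0$ for $s\ge\lambda-1$. On the other hand, $\sigma(\lambda+s)>0$, because $\Phi(2)\neq0$ forces $\{V<2\}$ to have positive measure. Hence the defining inequality in \eqref{defn of a} fails for $s\ge\lambda-1$, and so $a(\lambda)\le\lambda$.

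\textbf{Step 2: the large-$V$ region.} Let $x$ satisfy $V(x)\ge\lambda_0$ and set $\lambda:=V(x)$. By \Cref{d delta is smaller than lambda} we have $d_\delta(\lambda)<\lambda=V(x)<\lambda+\delta a(\lambda)$, so
\[
x\in\Omega_{\lambda+\delta a(\lambda)}\setminus\Omega_{d_\delta(\lambda)}.
\]
Two consequences follow.
\begin{itemize}
\item \emph{Injectivity radius.} By the definition \eqref{osc radius} of $b_\delta(\lambda)$ (the admissible radii are below $\mathrm{inj}_x$ for every such $x$), we get $\mathrm{inj}_x\ge b_\delta(\lambda)$. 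From $c_\delta(\lambda)\le1$ we get $a(\lambda)b_\delta(\lambda)^2\ge1$. Combined with Step 1 this gives $b_\delta(\lambda)\ge a(\lambda)^{-1/2}\ge V(x)^{-1/2}$.
\item \emph{Curvature.} The point $x$ lies in the $b_\delta(\lambda)$-neighborhood of the same set. Hence $|R_g|(x)\le R_\delta(\lambda)\le K_\delta(\lambda)$. Since $c_\delta(\lambda)\ge K_\delta(\lambda)/a(\lambda)$, we obtain $K_\delta(\lambda)\le c_\delta(\lambda)a(\lambda)\le a(\lambda)\le V(x)$, using Step 1 again.
\end{itemize}

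\textbf{Step 3: the compact remainder.} On $\{V\le\lambda_0\}$, which is compact by \eqref{cond-V-2}, the curvature is bounded and the injectivity radius is bounded below by a positive constant, since $\mathrm{inj}$ is continuous and positive. These bounds are trivially of the required form after enlarging $C$ and shrinking $c$. The resulting constants depend only on $\delta$ and $\lambda_0$ and not on the spectral parameter, which gives the claimed $(\lambda,\delta)$-independence once $\delta$ is fixed (or after taking the worst case over the finitely many $\delta$ used).

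\textbf{Main obstacle: the rescaled statement.} The delicate point is making the constants genuinely independent of the spectral parameter in the form needed for \Cref{heat kernle estimate}, namely for the rescaled pair $(g^\lambda,V^\lambda)$. The conditions $|R|\le C(V+1)$ and $\mathrm{inj}\ge c(V+1)^{-1/2}$ are invariant under $g\mapsto r^{-2}g$, $V\mapsto r^2V$, except for the additive $1$. I would therefore check that the rescaling factor satisfies $r_\delta(\lambda)\le C'$ uniformly, which I expect follows from $b_\delta(\lambda)\le\mathrm{inj}$ bounds together with \eqref{r delta}. If that fails, I would instead absorb the constant term using $V^\lambda\ge r_\delta(\lambda)^2$ and the lower bound $r_\delta(\lambda)^2\tilde\lambda\to\infty$ on the relevant region.
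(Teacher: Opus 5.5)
Your argument is correct and is essentially the paper's proof: fix one admissible $\delta$, take $\lambda=V(x)$ so that $x\in\Omega_{\lambda+\delta a(\lambda)}\setminus\Omega_{d_\delta(\lambda)}$, and combine $a(\lambda)<\lambda$ with $c_\delta(\lambda)\to0$ to get $\mathrm{inj}_x\ge b_\delta(\lambda)\ge V(x)^{-1/2}$ and $|R_g|(x)\le R_\delta(\lambda)\le V(x)$. Your explicit treatment of the compact set $\{V\le\lambda_0\}$, and your proof that $a(\lambda)\le\lambda-1$, fill in points the paper leaves implicit.

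Your closing ``main obstacle'' is not needed: the paper applies the lemma to the unrescaled pair $(g,V)$ and only then passes to the metric $\lambda g$. Also, your guess $r_\delta(\lambda)\le C'$ is false in general (for $V=\ln|x|$ on $\R^n$ one has $r_\delta(\lambda)=b_\delta(\lambda)\to\infty$), although your fallback works because $V\ge1$ makes the tameness conditions scale-invariant.
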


\begin{proof}
Note that $c_\delta(\lambda)\to0$ (i.e.\ \eqref{assumption}) holds for
sufficiently small $\delta$; fix such a $\delta_0>0$ and assume that
$c_{\delta_0}(\lambda)\to0$.
Then $c_{\delta_0}(\lambda)\to0$ implies
$
a(\lambda)\, b_{\delta_0}^2(\lambda)\to\infty,
$ and $
\frac{R_{\delta_0}(\lambda)}{a(\lambda)}\to0 .
$
Since $a(\lambda)<\lambda$, it follows that
$
\lambda\, b_{\delta_0}^2(\lambda)\to\infty,
$ and $
\frac{R_{\delta_0}(\lambda)}{\lambda}\to0.
$
In particular, for $\lambda$ sufficiently large, the curvature on the
level set $\{V=\lambda\}$ is bounded above by $\lambda$, and the
injectivity radius at every point of $\{V=\lambda\}$ is bounded below
by $\lambda^{-\half}$. This finishes the proof.
\end{proof}

\begin{proof}[Proof of \Cref{heat kernle estimate}]
The case $1\le j\le J$ is nothing but item (5) in \Cref{tesslation} (Note that,
under the rescaled metric, the bounds \eqref{rescaled curvature bound} and
\eqref{rescaled osc radius} hold). For $j=0$, we consider the rescaled metric
\[
\tilde g^\lambda := \tilde\lambda\, g^\lambda = \lambda\, g .
\]
Note that, by \Cref{tameness}, $(M,g)$ is tamed by $V$ for some
$(\lambda,\delta)$-independent $(C,c)$. Hence, under this rescaled metric,
since
$
Q_0 \subset \Omega_{d_\delta(\lambda)} \subset \Omega_\lambda,
$
we have $Q_0 \in \WBG(2C,c/2)$.

Moreover, by item~(3) in \Cref{properties of tessellation0},
$Q_0 \in \Cone(L,\rho,\varepsilon_0,\theta)$ with respect to the metric
$g^\lambda$, where the constants $(L,\rho,\varepsilon_0,\theta)$ depend
only on $n$. Since $\tilde\lambda \ge a^\lambda(\tilde\lambda)$ and
$a^\lambda(\tilde\lambda)\to\infty$ (see \eqref{a goes to infty}),
we have $\tilde\lambda>1$ for $\lambda$ sufficiently large. It follows
that $Q_0 \in \Cone(L,\rho,\varepsilon_0,\theta)$ also with respect to
the metric $\tilde g^\lambda$.

The estimate \eqref{heat kernel estimate on Q0} then follows from
\Cref{Cone implies SObolev} and \Cref{Varo}.

\end{proof}

\def\unlessstate{1}
\if\unlessstate0
Unless stated otherwise, all constants are independent of $(j,\lambda,\delta)$.

Let $1\le j\le J$. By \eqref{rescaled curvature bound}, the (inverse) exponential map
\[
\exp^\lambda_{x_j}:T_{x_j}M\to M
\]
identifies $Q_j$ with a Lipschitz domain $\widetilde Q_j\subset T_{x_j}M\cong\R^n$.  
For $x\in Q_j$, write $\tilde{x}=(\exp^\lambda_{x_j})^{-1}(x)\in\widetilde Q_j$.  
For $r>0$ and $B^{Q_j}_r(x)=\{y\in Q_j:d^\lambda(y,x)<r\}$, set
\[
B^{\widetilde Q_j}_r(\tilde{x})
:= (\exp^\lambda_{x_j})^{-1}(B^{Q_j}_r(x)).
\]

By the Rauch Jacobi Field Comparison Theorem and Toponogov’s comparison theorem, there exist $C>1$ and $\varepsilon_0>0$ such that
\begin{equation}\label{Lipschitz}
C^{-1}|\tilde x-\tilde y|
\le d^\lambda(\exp^\lambda_{x_j}(\tilde x),\exp^\lambda_{x_j}(\tilde y))
\le C|\tilde x-\tilde y|,
\qquad \tilde x,\tilde y\in\widetilde Q_j.
\end{equation}

By \cite[Theorem~1]{boulkhemair2007uniform} and item \eqref{tess item 3} in \Cref{properties of tessellation}, we obtain a uniform weak Poincaré inequality: there exists $C_P'>0$ such that for any $\tilde{x}\in\widetilde Q_j$, any $r<\varepsilon_0/2$, and any $u\in C^\infty(B^{\widetilde Q_j}_r(\tilde{x}))$,
\begin{equation}\label{Poincare0}
\int_{B^{\widetilde Q_j}_r(\tilde{x})}
\Big|u(z)-|B^{\widetilde Q_j}_r(\tilde{x})|^{-1}\!\!\int_{B^{\widetilde Q_j}_r(\tilde{x})}u(y)\,dy\Big|^2 dz
\le
C_P' r^2 \int_{B^{\widetilde Q_j}_{2r}(\tilde{x})} |\nabla^{\R^n} u|^2(z)\,dz,
\end{equation}
where $\nabla^{\R^n}$ is the gradient in Euclidean space $\R^n$.

Using \eqref{Lipschitz} and \eqref{Poincare0}, we obtain constants $C_P>0$ and $\varepsilon_0'>0$ such that for any $x\in Q_j$, any $r<\varepsilon_0'/2$, and any $u\in C^\infty(B^{Q_j}_r(x))$,
\begin{equation}\label{Poincare1}
\int_{B^{Q_j}_r(x)}
\Big|u(z)-|B^{Q_j}_r(x)|^{-1}\!\!\int_{B^{Q_j}_r(x)}u(y)\,dy\Big|^2 dz
\le
C_P r^2 \int_{B^{Q_j}_{2r}(x)} |\nabla^\l u|^2(z)\,dz,
\end{equation}
where $\nabla^\l$ is the gradient induced by $g^\l.$

The $\varepsilon_0$–cone property also implies the volume doubling estimate: there exists $C_D>1$ such that
\begin{equation}\label{doubling}
|B^{Q_j}_{2r}(x)| \le C_D |B^{Q_j}_r(x)|.
\end{equation}

The estimate \eqref{heat kernel estimate on Qj} for $1\le j\le J$ now follows from \eqref{Poincare1}, \eqref{doubling}, and Sturm’s results \cite[Theorem~2.6]{sturm1996analysis} and \cite[Theorem~2.4]{sturm1995analysis}.

For $j=0$, we apply the same argument to the rescaled metric $\tilde{g}^\lambda := \tilde{\lambda} g^\lambda$, which yields \eqref{heat kernel estimate on Q0}.\fi

\def\vitali{1}
\if\vitali0

Fix such a covering. Set $U^\l:=M\setminus\overline{O_\l}$.  
Let $\{\phi_j\}_{j=0}^J$ be a partition of unity subordinated to the open cover
\[
U_\l\ \cup\ \Bigl(\,\bigcup_{j=1}^J B^\l_{8^{-1}}(x_j)\Bigr)
\]
of $M$, where $\supp(\phi_0)\subset U_\l$ and $\supp(\phi_j)\subset B^\l_{8^{-1}}(x_j)$ for $j\ge1$.  
Moreover, there exist constants $C_k$, depending only on $k$, such that
\be\label{Gradient upper bound of phi}
|(\nabla^{\l})^k \phi_j|\le C_k,
\ee
where $\nabla^\l$ denotes the gradient with respect to $g^\l$.

Set
\[
V_j^\l:=\inf_{x\in B^\l_{8^{-1}}(x_j)} V^\l(x), 
\qquad\text{and}\qquad
\tvl:=\phi_0 V+\sum_{j=1}^J \phi_j V_j.
\]
It is straightforward to check the following.

\begin{lem}

On $M$, by \eqref{rescaled osc radius} we have
\be\label{Relation btw Vl and tvl}
\tvl \;\le\; V^\l \;\le\; \tvl + \delta^2 a^\l(\tilde\l).
\ee

The function $\tvl$ is smooth on the $\tfrac18$-neighborhood (with respect to $g^\l$) of
\[
\Omega^\l_{\tilde\l+\delta a^\l(\tilde\l)}\setminus\Omega^\l_{d_\delta^\l(\tilde\l)},
\]
denoted by $O_\l^1$.

Moreover, on $O_\l^1$ there exist constants $C_k$, depending only on $k$, such that
\be\label{Gradient upper bound of tvl}
|(\nabla^{\l})^k \tvl|\le C_k\,\delta^2\,\min\{a^\l(\tilde\l),\tvl(x)\}.
\ee
\end{lem}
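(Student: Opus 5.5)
The plan is to derive all three claims from two facts: $\sum_{j=0}^J\phi_j\equiv 1$, and the oscillation control \eqref{rescaled osc radius}, which bounds the oscillation of $V^\l$ on $g^\l$-balls of radius at most $1$ centered in $O_\l$ by $\delta^2 a^\l(\tilde\l)$. I read $\tvl$ as $\phi_0V^\l+\sum_{j\ge1}\phi_jV^\l_j$, i.e.\ with the rescaled potentials. I will fix the partition of unity so that the balls $B^\l_{8^{-1}}(x_j)$ cover the $\tfrac14$-neighborhood of $O_\l$. I will also choose $\supp(\phi_0)$ to lie outside the $\tfrac18$-neighborhood $O^1_\l$; this is possible because the covering is fine enough and the cutoffs can be built with the uniform bounds \eqref{Gradient upper bound of phi}.

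For \eqref{Relation btw Vl and tvl}, fix $x\in M$. Write
\[
\tvl(x)-V^\l(x)=\sum_{j\ge1}\phi_j(x)\bigl(V^\l_j-V^\l(x)\bigr),
\]
using $\sum_j\phi_j=1$; the $\phi_0$ term cancels exactly. If $\phi_j(x)\neq0$, then $x\in B^\l_{8^{-1}}(x_j)$. Hence $V_j^\l\le V^\l(x)$ by the definition of the infimum. Also $V^\l(x)-V^\l_j\le\osc_{B^\l_{8^{-1}}(x_j)}V^\l$. This ball lies inside a ball of radius $<1$ centered at a point of $O_\l$, since $x_j$ lies within distance $2r_0=\tfrac14$ of $O_\l$. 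So by \eqref{rescaled b delta}–\eqref{rescaled osc radius} the oscillation is $\le\delta^2a^\l(\tilde\l)$. Taking the convex combination gives both inequalities.

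For smoothness, $\phi_0\equiv0$ on $O^1_\l$ by the choice of support. Hence on $O^1_\l$ we have $\tvl=\sum_{j\ge1}\phi_jV^\l_j$, a locally finite sum of smooth functions with constant coefficients; by \Cref{Vitali covering0}, at most $N+1$ terms are nonzero near each point.

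For \eqref{Gradient upper bound of tvl}, fix $x\in O^1_\l$ and an index $j_0$ with $\phi_{j_0}(x)\ne0$. Since $\sum_j\nabla^k\phi_j=0$ for $k\ge1$,
\[
(\nabla^\l)^k\tvl(x)=\sum_{j}(\nabla^\l)^k\phi_j(x)\,\bigl(V^\l_j-V^\l_{j_0}\bigr).
\]
Every ball contributing here meets $B^\l_{8^{-1}}(x_{j_0})$. Hence all of them lie in a ball of radius $\le\tfrac12$ around a point of $O_\l$, and so $|V^\l_j-V^\l_{j_0}|\le\delta^2a^\l(\tilde\l)$. Combining this with \eqref{Gradient upper bound of phi} and the bound $N$ on the number of terms gives $|(\nabla^\l)^k\tvl|\le C_k\delta^2a^\l(\tilde\l)$.

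The main obstacle is the remaining bound by $\delta^2\tvl(x)$, i.e.\ showing that $a^\l(\tilde\l)\lesssim\tvl$ on $O^1_\l$. By \eqref{Relation btw Vl and tvl}, $\tvl\ge V^\l-\delta^2a^\l\ge V^\l_{\min}$ on $O^1_\l$, and $V^\l\ge d^\l_\delta(\tilde\l)-\delta^2a^\l(\tilde\l)$ there. So it suffices to prove $a(\l)\le C\,d_\delta(\l)$, uniformly for $\delta$ small.

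I would first try to get this from the definitions. Suppose instead that $d_\delta(\l)\ll a(\l)$. Then \Cref{d delta is smaller than lambda}, together with $a(\l)<\l$ and the doubling-type defining property of $a(\l)$ on $[\l-a,\l+a]$, should force $\l^{n/2}\sigma(\cdot)$ at a level comparable to $a$ to be $\le\delta\int(\l-V)_+^{n/2}$, contradicting the maximality in the definition of $d_\delta$.

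If this comparison fails, the fallback is to accept the weaker bound $C_k\delta^2a^\l(\tilde\l)$. Since $\tvl\ge1$ after rescaling for $\l$ large, by \eqref{a goes to infty}, the $\min$ then only matters where $\tvl<a^\l$. There I would shrink the radius used in the oscillation argument in proportion to $\tvl/a^\l$.
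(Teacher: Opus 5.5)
Your arguments for the sandwich inequality, for smoothness, and for $|(\nabla^\l)^k\tvl|\le C_k\delta^2a^\l(\tilde\l)$ follow the paper. The paper calls the first two immediate and proves the third with your telescoping identity $\sum_j(V_j-V_{j_0})(\nabla^\l)^k\phi_j$, together with the uniform bounds on $\phi_j$, bounded overlap, and the oscillation bound from $b^\l_\delta(\tilde\l)\ge 1$. Your requirement $\phi_0\equiv0$ on $O^1_\l$ is genuinely needed, both for smoothness and for $\sum_{j\ge1}(\nabla^\l)^k\phi_j=0$. The paper only imposes $\supp\phi_0\subset M\setminus\overline{O_\l}$, which does not give this.

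The gap is the $\tvl(x)$ half of the minimum, and it cannot be closed. Your reduction needs $a(\l)\le C\,d_\delta(\l)$ with $C$ independent of $\delta$, and this is false. The defining inequality of $d_\delta$ is a smallness condition proportional to $\delta$, while $a$ does not depend on $\delta$, so your contradiction argument has nothing to contradict. Take $V=1+|x|^2$ on $\R^n$. Then $a(\l)\approx c_n\l$, $d_\delta(\l)\approx\kappa_n\delta^{2/n}\l$, the curvature vanishes, and $b_\delta(\l)\approx\frac{c_n}{4}\delta^2\sqrt\l$. Hence $c_\delta(\l)\to0$ and $r_\delta(\l)=b_\delta(\l)=:b$.

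In this example the claimed bound itself fails for $n\ge2$. Take a radial segment crossing the shell $\{d_\delta\le V\le 2d_\delta\}\subset O_\l$. It has $g^\l$-length about $0.4\sqrt{d_\delta}/b$, and $V^\l$ increases along it by $b^2d_\delta$. Meanwhile $|\tvl-V^\l|\le b^2\delta^2a(\l)\ll b^2d_\delta$. So at some point of the shell $|\nabla^\l\tvl|\gtrsim b^3\sqrt{d_\delta}$, while $\tvl\le V^\l\le 2b^2d_\delta<a^\l(\tilde\l)$ there. Therefore
\[
\frac{|\nabla^\l\tvl|}{\delta^2\,\tvl}\gtrsim\frac{b}{\delta^2\sqrt{d_\delta}}\approx\delta^{-1/n}\longrightarrow\infty\qquad(\delta\to0).
\]

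Your fallback of shrinking radii cannot help either. The partition of unity is fixed at $g^\l$-scale $\frac18$, and the only oscillation information available is the uniform bound $\delta^2a^\l(\tilde\l)$. The paper's own argument likewise yields only $C_k\delta^2a^\l(\tilde\l)$. Alternatively, using $0\le V_j\le V^\l(x)$ for every contributing $j$, it gives $C_k\min\{\delta^2a^\l(\tilde\l),V^\l(x)\}$, with no factor $\delta^2$ on the second entry. That weaker statement is what should be claimed, and your argument proves it.
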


\begin{proof}
The smoothness and estimate \eqref{Relation btw Vl and tvl} are immediate. It remains to prove \eqref{Gradient upper bound of tvl}.

For any $x\in O_\l^1$, let
\[
J_x:=\big\{\, j\in\{1,\dots,J\}:\ x\in B^\l_{8^{-1}}(x_j) \,\big\}.
\]
Fix $j_0\in J_x$. Then
\[
\begin{aligned}
(\nabla^\l)^k \tvl(x)
&= \sum_{j\in J_x} V_j\, (\nabla^\l)^k \phi_j \\
&= \sum_{j\in J_x} V_{j_0}\, (\nabla^\l)^k \phi_j 
   + \sum_{j\in J_x} (V_j - V_{j_0})\, (\nabla^\l)^k \phi_j \\
&= \sum_{j\in J_x} (V_j - V_{j_0})\, (\nabla^\l)^k \phi_j,
\end{aligned}
\]
since $\sum_{j\in J_x} \phi_{j}(x) \equiv 1$ implies $\sum_{j\in J_x} (\nabla^\l)^k \phi_j(x) = 0$.

Now the estimate follows from the identity above, together with  
\eqref{Gradient upper bound of phi} and \eqref{rescaled osc radius}.
\end{proof}

\subsubsection{Garding's inequality for Schr\"odinger operators}

\fi

\def\sq{{\mathrm{sq}}}
\def\before{1}
\if\before0
\subsection{Finite propogation speed}
Before proceeding, the following lemma is needed.
\begin{lem}\label{Fourier}
   Let $N_2>N_1+1>1$ be two fixed constants. Let $\rho_{N_1}^{N_2}\in C^\infty(\R)$ such that
    \begin{enumerate}[(1)]
       \item $\rho_{N_1}^{N_2}|_{(-\infty,N_1]}\equiv N_1$, $\rho_{N_1}^{N_2}(t)=t$ if $|t|\geq N_2$.
       \item $|(\rho_{N_1}^{N_2})'(t)|\leq 2$ and $|(\rho_{N_1}^{N_2})''(t)|\leq 16(N_2-N_1)^{-1}$.
    \end{enumerate}
    
    For any $t>0$, consider the following function $E_{N_1,t}^{N_2}\in C(\R)$
    \[
   E_{N_1,t}^{N_2}(x):=e^{-t\rho_{N_1}^{N_2}(x^2)+tN_1} 
    \]
      Let $$F_{N_1,t}^{N_2}(\xi):=\int_{\R}e^{-i x\cdot\xi}E_{N_1,t}^{N_2}(x)dx$$ be the Fourier transformation of $E_{N_1}^{N_2}$, then  there exists some universal constant $C$ such that
      \[
      |F_{N_1,t}^{N_2}(\xi)|\leq C(|\xi|^2+1)^{-1}.
      \]
\end{lem}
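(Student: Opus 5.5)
The estimate follows from the fact that $E^{N_2}_{N_1,t}$ is smooth, and that $E$, $E'$ and $E''$ are all integrable with $L^1$ norms bounded by a universal constant. From this, $|\xi|^2|F(\xi)| = |\widehat{E''}(\xi)| \le \|E''\|_{L^1}$ and $|F(\xi)|\le \|E\|_{L^1}$. Adding these two bounds gives $(|\xi|^2+1)|F(\xi)|\le \|E\|_{L^1}+\|E''\|_{L^1}$. So the whole plan reduces to bounding $\|E\|_{L^1}$ and $\|E''\|_{L^1}$ uniformly in $t>0$ and in $N_1,N_2$. Integration by parts is justified because $E$ decays like $e^{-tx^2}$ at infinity.

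\emph{Step 1: the function $E$.} Write $\rho=\rho^{N_2}_{N_1}$, $u(x)=t(\rho(x^2)-N_1)\ge 0$ (using $\rho\ge N_1$, which I read from the constraints), so $E=e^{-u}\le 1$. The function $E$ equals $1$ on $|x|\le\sqrt{N_1}$ and equals $e^{-t(x^2-N_1)}$ for $x^2\ge N_2$. On the transition region $N_1\le x^2\le N_2$ it lies between these.

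\emph{Step 2: the derivatives.} Compute
\[
E''=\big(u'^2-u''\big)e^{-u},\qquad u'=2tx\rho'(x^2),\qquad u''=2t\rho'(x^2)+4t x^2\rho''(x^2).
\]
Then use $|\rho'|\le 2$ and $|\rho''|\le 16(N_2-N_1)^{-1}$. The identity $\int E''=0$ and the sign structure (for instance, the total variation of $E'$ is controlled when $E'$ is monotone on pieces) suggest bounding $\|E''\|_{L^1}$ by the total variation of $E'$. Equivalently, bound $\int|u'|^2e^{-u}$ and $\int|u''|e^{-u}$.

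\emph{Main obstacle: uniformity.} As stated, the claim cannot be uniform in everything. For example, $\|E\|_{L^1}\ge 2\sqrt{N_1}$, and $F(0)=\|E\|_{L^1}$ is unbounded as $N_1\to\infty$ or $t\to0$. So I would first establish the bound with a constant depending on $N_1,N_2,t$, namely
\[
C(N_1,N_2,t)=\|E\|_{L^1}+\|E''\|_{L^1}\lesssim \sqrt{N_2}+t^{-1/2}+\sqrt{N_2}\,t\,\big(1+N_2(N_2-N_1)^{-1}\big)+\sqrt{t}.
\]
I would then check what uniformity the later application (finite propagation speed) actually needs, presumably $t$ in a bounded range and $N_2-N_1\gtrsim N_2$. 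I would also try to use the hypothesis $N_2>N_1+1$ to absorb the $\rho''$ term, via $x^2|\rho''|\lesssim N_2/(N_2-N_1)$. I expect that reconciling the claimed "universal constant" with this dependence is the real difficulty, and I would restate the lemma accordingly if needed.
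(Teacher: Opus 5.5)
Your diagnosis is correct, and the paper's own argument does not establish the stated bound either. Its proof uses the unsmoothed function: $E=1$ for $|x|\le\sqrt{\Lambda}$ and $E=e^{-t(x^2-\Lambda)}$ otherwise, with $\Lambda$ in the role of $N_1$. So $\rho$, $\rho''$ and the transition region never enter. It integrates by parts once, the boundary terms at $\pm\sqrt{\Lambda}$ cancel, and $2\int_{\sqrt{\Lambda}}^{\infty}tx\,e^{-t(x^2-\Lambda)}\,dx=1$ then yields only $|F(\xi)|\le 2|\xi|^{-1}$. That is first-order decay with a universal constant, and no control near $\xi=0$.

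Your observation $F(0)=\|E\|_{L^1}\ge 2\sqrt{N_1}$ already refutes the statement, and the failure is not confined to small $\xi$. Take $t=1$, $N_2=N_1+2$ and $\rho$ monotone. Then $E-\mathbf{1}_{[-\sqrt{N_1},\sqrt{N_1}]}$ has $L^1$-norm $O(N_1^{-1/2})$, so $F(\xi)=2\xi^{-1}\sin(\sqrt{N_1}\,\xi)+O(N_1^{-1/2})$. Choosing $|\xi|$ a small multiple of $\sqrt{N_1}$ with $|\sin(\sqrt{N_1}\,\xi)|=1$ gives $\sup_{|\xi|\ge1}|\xi|^2|F(\xi)|\gtrsim\sqrt{N_1}$. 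Hence restricting $t$ to a bounded range cannot give a universal $|\xi|^{-2}$ bound. (The lemma sits in a draft block on finite propagation speed that is switched off in the source and plays no role in the proof of the main theorem, so nothing downstream depends on the missing uniformity.)

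Comparing routes: the paper's single integration by parts is exactly what produces a universal constant. For smooth nondecreasing $\rho$ it reads $|\xi|\,|F(\xi)|\le\|E'\|_{L^1}=2$, because $E$ is even, equals $1$ at $0$, and is nonincreasing on $[0,\infty)$. Your $\|E\|_{L^1}+\|E''\|_{L^1}$ argument is the right tool for genuine $(1+|\xi|^2)^{-1}$ decay, necessarily with a constant depending on $(N_1,N_2,t)$. A correct version of the lemma is $|F(\xi)|\le\min\{\|E\|_{L^1},\,2|\xi|^{-1},\,\|E''\|_{L^1}|\xi|^{-2}\}$, with only the middle bound universal.

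Your displayed constant is valid but crude; it can be sharpened as follows.
The transition set $\sqrt{N_1}\le|x|\le\sqrt{N_2}$ has total length at most $2(N_2-N_1)/\sqrt{N_2}$. So the $x^2\rho''(x^2)$ term contributes only $O(t\sqrt{N_2})$; this, rather than $N_2>N_1+1$, is what absorbs $\rho''$.
On that set, $\int u'^2e^{-u}\le 2\sup|u'|\le 8t\sqrt{N_2}$, by substituting $v=u(x)$ on each half-line.
Altogether $\|E\|_{L^1}+\|E''\|_{L^1}\lesssim\sqrt{N_2}+t^{-1/2}+t\sqrt{N_2}+\sqrt{t}$, which the example above shows is sharp in $N_2$ when $t=1$.

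Finally, both arguments use that $\rho$ is nondecreasing (or at least $\rho\ge N_1$). You assumed this, but the hypotheses do not imply it: $\rho$ may dip below $N_1$ on $(N_1,N_2)$ by an amount of order $N_2-N_1$, making $\sup E$ exponentially large in $t(N_2-N_1)$. Also, condition (1) should impose $\rho(s)=s$ only for $s\ge N_2$; for $s\le -N_2$ it contradicts $\rho\equiv N_1$ on $(-\infty,N_1]$.
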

\begin{proof}
    We compute
    \[\ba
    &\quad\int_{\R} e^{-i x\cdot\xi}E_{\Lambda,t}(x)dx=\int_{-\sqrt{\Lambda}}^{\sqrt{\Lambda}} e^{-ix\cdot\xi}dx+\int_{\sqrt{\Lambda}}^{\infty} e^{-ix\cdot\xi} e^{-t(x^2-\Lambda)}dx+\int^{-\sqrt{\Lambda}}_{-\infty} e^{-ix\cdot\xi} e^{-t(x^2-\Lambda)}dx\\
    &=\frac{i}{\xi}(e^{-i\sqrt\Lambda\cdot\xi}-e^{i\sqrt\Lambda\cdot\xi})+\int_{\sqrt{\Lambda}}^{\infty} e^{-ix\cdot\xi} e^{-t(x^2-\Lambda)}dx+\int^{-\sqrt{\Lambda}}_{-\infty} e^{-ix\cdot\xi} e^{-t(x^2-\Lambda)}dx\\
    &=\frac{-2i}{\xi}\int_{\sqrt{\Lambda}}^\infty tx e^{-ix\cdot\xi} e^{-t(x^2-\Lambda)} dx+\frac{-2i}{\xi}\int^{-\sqrt{\Lambda}}_{-\infty} tx e^{-ix\cdot\xi} e^{-t(x^2-\Lambda)} dx.
    \ea\]
    Note that
    \[
 \left|2\int_{\sqrt{\Lambda}}^\infty tx e^{-ix\cdot\xi} e^{-t(x^2-\Lambda)} dx\right|\leq 2\int_{\sqrt{\Lambda}}^\infty tx e^{-t(x^2-\Lambda)} dx=1
    \]
    and
  \[
 \left|2\int^{-\sqrt{\Lambda}}_{-\infty} tx e^{-ix\cdot\xi} e^{-t(x^2-\Lambda)} dx\right|\leq 2\int_{\sqrt{\Lambda}}^\infty tx e^{-t(x^2-\Lambda)} dx=1,
    \]
    the lemma follows.
\end{proof}

\begin{lem}\label{Fourier2}
We use the same notation as in \Cref{Fourier}.
       Let $\phi\in C_c^\infty(-1,1)\subset C_c^\infty(\R)$ be a bump function such that $\phi(\xi)\equiv 1$ whenever $|xi|\leq1$.

      Let 
      \[E^1_{\Lambda,t}(x):=\frac{1}{2\pi}\int_\R e^{ix\cdot\xi}\phi(\xi)F_{\Lambda,t}(\xi)d\xi\]
      and
      \[E^2\]
\end{lem}


Now we use the same notation as in \cref{rescaling} and \Cref{Fourier}.

Let
\[
O^\l:=\Omega^\l_{\tilde\l+\delta a^\l(\tilde\l)}\setminus\Omega^\l_{d_\delta^\l(\tilde\l)}.
\]

Fix a $x_0\in O^\l$, let $B^\l_r:=B_r^\l(x_0)$, i.e., the geodesci ball of raduius $r$ w.r.t. $g^\l$, centering at $x_0$.

Let $H^\l_D$ be restriction of $H^\l$ on $B_1^\l$ with Dirichlet boundary condition.
Let $\Lambda:=\inf_{x\in B^\l}V^\l$, then since $H^\l_D\geq \Lambda$, one can see easily that
\be\label{Dirichlet kernel is heat kernel}
E_{\Lambda,t}\left(\sqrt{H^\l_D}\right)=e^{-t(H^\l_D-\Lambda)}.
\ee

\def\kl{{K_\Lambda^\l}}
\def\kld{{K_{\Lambda,D}^\l}}

Let $\kl(t,x,y)$ and $K_{\Lambda,D}^\l(t,x,y)$ be the integral kernel of $E_{\Lambda,t}\left(\sqrt{H^\l}\right)$ and $E_{\Lambda,t}\left(\sqrt{H_D^\l}\right)$ respectively. Then by \eqref{Dirichlet kernel is heat kernel}, the kernel $K_{\Lambda,D}^\l$ is nothing but the heat kernel of the positive operator $H_D^\l-\Lambda$.

\begin{lem}
For any $\delta>0$, there exists $\l_\delta>0$, such that when $\l>\l_\delta$, we have
for every $x\in\Omega_{\l+\delta a(\l)},$
\[|e_H(\l,x,x)-e_{H^{x}}(\l,x,x)|\leq \delta e_{H^x}(\l,x,x).\]
\end{lem}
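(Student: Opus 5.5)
The plan is to read $H^x$ as in the surrounding (unfinished) construction. Pass to the rescaled metric $g^\l=(r_\delta(\l))^{-2}g$ and set $H^x:=H^\l_D$, the restriction of $H^\l$ to $B^\l_1(x)$ with Dirichlet boundary condition. Let $\Lambda:=\inf_{B^\l_1(x)}V^\l$, so that $H^\l-\Lambda\ge0$ on the ball. By \eqref{eigencouting and scaled eigencouting}, all spectral functions can be compared after scaling, with $\l$ replaced by $\tilde\l$. I will compare the two shifted pointwise counting functions
\[
\mu_x(s):=e_{H^\l-\Lambda}(s,x,x),\qquad \nu_x(s):=e_{H^\l_D-\Lambda}(s,x,x),
\]
at $s=\tilde\l-\Lambda$ by means of \Cref{KHL2}. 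The roles are: $\nu_x$ is the model, and the heat kernels supply \eqref{error estimate for laplace transform}.

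\textbf{Step 1 (heat kernel comparison).} By domain monotonicity, $0\le K_D\le K$ for the heat kernels of $H^\l_D-\Lambda$ and $H^\l-\Lambda$. The difference $K-K_D$ at $(x,x)$ is given by the Duhamel/exit-time formula on $\partial B^\l_1(x)$. I will bound it by $C t^{-\n}e^{-c/t}$ for $t\in(0,1)$. The ingredients are the Gaussian bound $G(c_1)$ for the pieces $Q_j$ from \Cref{heat kernle estimate}, the bounds \eqref{rescaled curvature bound} and \eqref{rescaled osc radius}, and the oscillation bound $V^\l-\Lambda\le\delta^2a^\l(\tilde\l)$ on the ball, which controls the potential factor by $e^{t\delta^2a^\l}$. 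For a lower bound on $K_D(t,x,x)$ I use \Cref{heat remainder estimate} at the center of the ball. This gives $K_D(t,x,x)\ge \tfrac12(4\pi t)^{-\n}e^{-t(V^\l(x)-\Lambda)}$ for $t\lesssim(\delta^2a^\l)^{-1}$. Together these yield \eqref{error estimate for laplace transform} with $\beta(t)=Ce^{-c/t}$ on $t\le \l_1^{-1}$, where $\l_1:=2\delta^2a^\l(\tilde\l)+C$.

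\textbf{Step 2 (hypotheses on $\nu_x$).} I need \eqref{exponential bound} and \eqref{uniform limit of quotient} for $\nu_x$ when $s\ge\l_1$. The pointwise Weyl law \eqref{pointwise Weyl law on U1}, applied inside the proof of \Cref{quantative Weyl law} with $S=\delta^2a^\l$, gives $\nu_x(s)=(1+o(1))(2\pi)^{-n}\omega_n(s-(V^\l(x)-\Lambda))_+^{\n}$ for $s\ge C\l_1$. \Cref{ob C} then verifies both hypotheses with $L=L(n)$. \Cref{KHL2} therefore yields $|\mu_x(s)-\nu_x(s)|\le(\ep+\ep L+C_1Le^{-cs/C_1})\nu_x(s)$ for $s\ge C_1\l_1$. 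Since $a^\l(\tilde\l)\to\infty$ by \eqref{a goes to infty}, choosing $\ep$ small relative to $\delta$ and then $\l$ large makes this factor at most $\delta$, uniformly in $x$.

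\textbf{Step 3 (the main obstacle).} Step 2 needs $s=\tilde\l-\Lambda\ge C_1\l_1\approx 2C_1\delta^2a^\l$. This holds whenever $\Lambda\le\tilde\l-\delta a^\l$, i.e.\ essentially for $x\in\Omega_{\l-\delta a(\l)}$ once $\delta$ is small. For $x\in\Omega_{\l+\delta a(\l)}\setminus\Omega_{\l-\delta a(\l)}$, however, $\tilde\l-\Lambda$ may be of size $\le\delta a^\l$ or even negative, and \Cref{KHL2} does not apply. For this band I would try to use the Step 1 estimate directly rather than through \Cref{KHL2}. Apply \Cref{lem:spectral-bound-from-heat-kernel} at the larger spectral parameter $s'=\tilde\l-\Lambda+2\delta a^\l$, where Step 2 does apply, and use monotonicity in $s$ to sandwich $\mu_x(\tilde\l-\Lambda)$ between the values of $\nu_x$ at shifted arguments. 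The relative error should then be controlled through \eqref{uniform limit of quotient} on the scale $\delta a^\l$. I expect this to be the real difficulty: where $\nu_x(\tilde\l-\Lambda)$ is tiny or zero, a relative bound requires showing that $e_H(\l,x,x)$ is correspondingly small in the classically forbidden zone. I would attempt this with an Agmon-type decay estimate for the eigenfunctions, combined with the bound $e^{-c/t}$ on $K-K_D$. If this fails, the statement has to be weakened on that band, for example to an additive error of size $\delta\,\nu_x(\tilde\l-\Lambda+2\delta a^\l)$.
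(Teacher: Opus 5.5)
\textbf{The lemma is false as stated, so Step~3 cannot be completed.} The paper gives no proof to compare with. The lemma sits in a draft block that the source switches off, its proof environment is empty, $H^x$ is never defined, and the final proof of \Cref{main} does not use it.

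Use your reading $H^x=H^\lambda_D$ on $B^\lambda_1(x)$, and take $x$ with $\lambda+\delta^2a(\lambda)<V(x)<\lambda+\delta a(\lambda)$. For large $\lambda$ such points exist, since $a(\lambda)>0$, $V$ is continuous and $V\to\infty$. They lie in $\Omega_{\lambda+\delta a(\lambda)}\setminus\Omega_{d_\delta(\lambda)}$.

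\begin{itemize}
\item By \eqref{rescaled b delta}--\eqref{rescaled osc radius}, $\mathrm{osc}_{B^\lambda_1(x)}V^\lambda\le\delta^2a^\lambda(\tilde\lambda)$, so $\Lambda=\inf_{B^\lambda_1(x)}V^\lambda>\tilde\lambda$.
\item Since $H^\lambda_D\ge\Lambda$ in the form sense, $e_{H^\lambda_D}(\tilde\lambda,x,x)=0$, and the right-hand side vanishes.
\item On the other hand $e_H(\lambda,x,x)\ge\phi_1(x)^2>0$, where $\phi_1$ is the positive ground state of $H$.
\end{itemize}

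No Agmon estimate can repair this: decay makes the left side small, never zero. The same happens for a Neumann ball or for the frozen model $\Delta+V(x)$, since any local model has no spectrum below $\lambda$ at a classically forbidden point. A true version must either restrict $x$ to the classically allowed region or accept an additive error, as in your fallback.

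The additive route is in effect what the paper does. It never compares with the global $e_H$ pointwise. It applies \Cref{KHL2}, through \Cref{quantative Weyl law}, only to shifted local operators on the pieces $Q_j$, $j\le J'$. The pieces near $\{V=\lambda\}$ are handled after integrating in $x$, by the crude Neumann bound \eqref{Weyl law Qj, j big} together with \eqref{spectrum between l-a(l) an l+a(l)}, at a cost of $O(\delta^{n/2}\Phi(\lambda))$.

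\textbf{Step~2 also fails on $\Omega_{\lambda-\delta a(\lambda)}$ with a single $\delta$.} To make the factor from \Cref{KHL2} at most $\delta$, you take $\epsilon\ll\delta$. \Cref{KHL2} then needs $\tilde\lambda-\Lambda\ge C_1(\epsilon)\lambda_1\approx2C_1(\epsilon)\delta^2a^\lambda(\tilde\lambda)$. Nothing guarantees $2C_1(\epsilon)\delta^2\le\delta$. In the proof of \Cref{KHL2}, $C_1(\epsilon)$ is at least the number of exponentials needed to resolve a cutoff of width $\sigma\approx\epsilon/n$, so it grows at least like $\epsilon^{-1}$, and the inequality generally fails. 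The paper avoids this by fixing $\epsilon$ first and then taking $\delta<\delta_0(\epsilon)$. The lemma's single $\delta$, which serves as accuracy, oscillation scale and band width at once, rules out that decoupling.

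\textbf{Two smaller points need repair.}
\begin{itemize}
\item \Cref{ob C} concerns the exact model $(r-C)_+^{\alpha}$. The pointwise Weyl law gives $\nu_x$ only up to a fixed relative error, so \eqref{uniform limit of quotient} holds only approximately. This is harmless, because the proof of \Cref{KHL2} uses only \eqref{nu epsilon tau}, but you must say so.
\item In Step~1, the bounds $G(c_1)$ are for Dirichlet/Neumann kernels on the $Q_j$, not for the global kernel of $H^\lambda-\Lambda$. That operator's potential is negative off the ball, down to about $-\Lambda$. Here $\Lambda$ can be close to $\tilde\lambda$, and $\tilde\lambda/a^\lambda(\tilde\lambda)=\lambda/a(\lambda)$ may be unbounded. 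So the exit-time weights can reach $e^{t\Lambda}$, not just $e^{t\delta^2a^\lambda(\tilde\lambda)}$. You must show that low-potential regions are far enough from $x$ for the Gaussian factor to dominate when $t\lesssim(\delta^2a^\lambda(\tilde\lambda))^{-1}$.
\end{itemize}
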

\begin{proof}
    
\end{proof}
\fi
\subsection{DN bracketing method}\label{proof of thm 13}

We are now in a position to apply DN bracketing method to prove \Cref{main}. We continue to use the notation introduced in \cref{rescaling} and \cref{Vor}.
\subsubsection{Reducing to the region
$\Omega_{\l-\delta a(\l)} \setminus \Omega_{d_\delta(\l)}$}\label{reduction to near V=lambda}

Using the DN bracketing argument, we will show in this subsection that,
up to an error of order $\approx \delta \Phi(\l)$, it suffices to focus the discussion on
the region $\Omega_{\l-\delta a(\l)} \setminus \Omega_{d_\delta(\l)}$.

For each $Q_j$, let $H^\lambda_{Q_j,D}$ and $H^\lambda_{Q_j,N}$ denote the restriction of $H^\lambda$ to $Q_j$ with Dirichlet and Neumann boundary conditions, respectively. Let $\mathcal{N}^\lambda(\mu,H^\lambda_{Q_j,D/N})$ be the corresponding eigenvalue counting function. By domain monotonicity, we have
\be\label{domain monotonicity}
\sum_{j=0}^{J+1} \mathcal{N}^\lambda(\tilde\lambda,H^\lambda_{Q_j,D})
\;\le\;
\mathcal{N}^\lambda(\tilde\lambda)
\;\le\;
\sum_{j=0}^{J+1} \mathcal{N}^\lambda(\tilde\lambda,H^\lambda_{Q_j,N}).
\ee
\paragraph{On $Q_0$ and $Q_{J+1}$.} 
We have
\be\label{Weyl law Q0}
\begin{aligned}
&\quad\mathcal{N}^\l(\tilde\l, H^\l_{Q_0,D})
\le
\mathcal{N}^\l(\tilde\l, H^\l_{Q_0,N})
\le
\mathcal{N}^\l(\tilde\l, \Delta^\l_{Q_0,N})
\\
&\le
C_3 \tilde\l^\n |Q_0|^\l
\le
C_3 \l^\n \bigl| \Omega_{d_\delta(\l)} \bigr|
\leq
C_3\, \delta\, \Phi(\l),
\end{aligned}
\ee
where $|Q_0|^\l$ denotes the volume of $Q_0$ with respect to $g^\l$, and
$\Delta^\l_{Q_0,N}$ denotes the restriction of $\Delta^\l$ to $Q_0$ with
Neumann boundary condition.
Here the third inequality follows from the heat kernel estimate
\eqref{heat kernel estimate on Q0} and
\Cref{lem:spectral-bound-from-heat-kernel}, the fourth inequality follows
from the relation between the rescaled volume and the original volume,
together with  $Q_0 \subset \Omega_{d_\delta(\l)}$, and the
last inequality follows from the definition of $d_\delta(\l)$, or
\eqref{spectral on d delta is small}.

 It is also immediate that (since $V^\l\geq\tilde{\l}$ on $Q_{J+1}$)
\be\label{Weyl law QJ}
\mathcal{N}^\lambda(\tilde\lambda,H^\lambda_{Q_{J+1},D})
=
\mathcal{N}^\lambda(\tilde\lambda,H^\lambda_{Q_{J+1},N})
=0.
\ee
\paragraph{On $Q_j,j\geq1$.} It remains to estimate
\[
\mathcal{N}^\lambda(\tilde\lambda,H^\lambda_{Q_{j},D/N}),
\qquad 1\le j\le J.
\]
We may as well assume that there exists $J'<J$ such that   
\[
Q_j\cap\Omega^\l_{\tilde\l-\delta a^\l(\tilde\l)}\neq\emptyset,
\]
iff  $1\le j\le J'$.

Let $C_j:=\inf_{x\in Q_j}V^\l(x)$ and 
\[
\tH^\l_{Q_j,D/N}:=H^\l_{Q_j,D/N}-C_j.
\]
Then we clearly have
$
\N^\l(\tilde\l,H^\l_{Q_j,N})
=
\N^\l(\tilde\l-C_j,\tH^\l_{Q_j,N}).
$
Thus, using \eqref{spectrum between l-a(l) an l+a(l)},
\Cref{lem:spectral-bound-from-heat-kernel}, and
\eqref{heat kernel estimate on Qj} the same way as in \eqref{Weyl law Q0}, we obtain if $\delta\in\big(0,\half\big)$
\begin{equation}\label{Weyl law Qj, j big}
\begin{aligned}
0
&\le
\sum_{j=J'+1}^J \N^\l\big(\tilde\l, H^\l_{Q_j,D}\big)
\le
\sum_{j=J'+1}^J \N^\l\big(\tilde\l, H^\l_{Q_j,N}\big)
=
\sum_{j=J'+1}^J \N^\l\big(\tilde\l - C_j, \tH^\l_{Q_j,N}\big)
\\
&\le
\sum_{j=J'+1}^J \N^\l\big(3\delta a^\l(\tilde\l), \Delta^\l_{Q_j,N}\big)
\le
C_4 \big(3\delta a^\l(\tilde\l)\big)^\n
\sum_{j=J'+1}^J |Q_j|^\l
\\
&\le
C_4 \big(3\delta a(\l)\big)^\n
\bigl|
\Omega_{\l+2\delta a(\l)} \setminus \Omega_{\l-2\delta a(\l)}
\bigr|
\le
C_4'\, \delta^\n\, \Phi(\l),
\end{aligned}
\end{equation}
where $\Delta^\l_{Q_j,N}$ denotes restriction of $\Delta^\l$ on $Q_j$ with Neumann boundary condition.

\subsubsection{Dealing with $Q_j,1\leq j\leq J'$}\label{dealing}
Now, it suffices to focus the discussion on
$Q_j$, $1 \le j \le J'$, which lie inside the $1$-neighborhood (with respect to $g^\l$) of the region
$\Omega_{\l-\delta a(\l)} \setminus \Omega_{d_\delta(\l)}$.
For this purpose, we apply \Cref{quantative Weyl law} to  $\tilde{H}^\l_{Q_j,D/N}(={H}^\l_{Q_j,D/N}-C_j)$, to obtain:
\begin{lem}
For each $\ep\in(0,1)$, there exist constants
$C=C(\ep,n), C'=C'(n)>0$, $\delta_0=\delta_0(\ep,n)$ and  such that whenever $\delta<\delta_0$ and
$\l\ge\l_0$ for some $\l_0=\l_0(\delta,\ep)>0$, for each $1\le j\le J'$,
\begin{equation}\label{Weyl law Qj, j small}
\begin{aligned}
\N^\l(\tilde\l,H^\l_{Q_j,N})
&\le
\Big(1+C'\ep+C\delta\Big)(2\pi)^{-n}\omega_n
\int_{Q_j} (\tilde\l-V^\l(x))_+^\n\,\dvol_M^\l(x),
\\
\N^\l(\tilde\l,H^\l_{Q_j,D})
&\ge
\Big(1-C'\ep-C\delta\Big)(2\pi)^{-n}\omega_n
\int_{Q_j} (\tilde\l-V^\l(x))_+^\n\,\dvol_M^\l(x).
\end{aligned}
\end{equation}
\end{lem}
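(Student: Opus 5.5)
The plan is to apply \Cref{quantative Weyl law} to $U=Q_j$ (metric $g^\l$) with the shifted potential $V_j:=V^\l-C_j\ge0$, so that $\tilde H^\l_{Q_j,D/N}=\Delta^\l+V_j$, at spectral parameter $s:=\tilde\l-C_j$. First we check the hypotheses uniformly in $j,\l,\delta$. By \eqref{rescaled curvature bound} and \eqref{rescaled osc radius}, together with items (2)--(5) of \Cref{tesslation} applied with $r_0=8^{-1}$ and $R=1$, we get $Q_j\in\BG(\K,r_0)\cap G(c_1)\cap T(c,\ep_0)$ with constants depending only on $n$. Since $\mathrm{diam}(Q_j)\le 2r_0<b^\l_\delta(\tilde\l)$, the rescaled version of \Cref{estimate on Qj} (via \eqref{rescaled b delta}) gives $0\le V_j\le\delta^2a^\l(\tilde\l)$ on $Q_j$. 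We may therefore take $S:=\max\{\delta^2 a^\l(\tilde\l),1\}$, which equals $\delta^2a^\l(\tilde\l)$ for $\l$ large by \eqref{a goes to infty}.

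Next we check the size condition $s>\cc(2S+\cc)$. Because $Q_j$ meets $\Omega^\l_{\tilde\l-\delta a^\l(\tilde\l)}$ for $j\le J'$, we have $C_j<\tilde\l-\delta a^\l(\tilde\l)$, hence $s\ge\delta a^\l(\tilde\l)$. Then $S s^{-1}\le\delta$, and the requirement becomes $\delta a^\l(\tilde\l)>\cc(\ep,n)\big(2\delta^2a^\l(\tilde\l)+\cc\big)$. This holds once $\delta<\delta_0(\ep,n):=(4\cc)^{-1}$ and $\l\ge\l_0(\delta,\ep)$, where $\l_0$ is chosen so that $\delta a^\l(\tilde\l)>2\cc^2$; such $\l_0$ exists by \eqref{a goes to infty}. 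The resulting error factor is $1\pm(\cc''\ep+\cc'Ss^{-1})\subset 1\pm(C'\ep+C\delta)$ with $C'=\cc''(c,c_1,n)$ depending only on $n$.

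Finally we undo the shift. The counting functions satisfy $\N(s,\tilde H^\l_{Q_j})=\N^\l(\tilde\l,H^\l_{Q_j})$, and $(s-V_j)_+=(\tilde\l-V^\l)_+$ pointwise, so the model integrals agree exactly. This yields \eqref{Weyl law Qj, j small}, using the Neumann upper bound and the Dirichlet lower bound; the theorem gives both bounds for either boundary condition. Here we read the lower bound of \Cref{quantative Weyl law} with $\int_U$ in place of the misprinted $\int_{Q_j}$.

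The main obstacle I expect is the uniformity of constants. One must make sure that $\cc'$ and $\cc''$ do not depend on $\delta$ or $\l$, which requires $\K,r_0,c_1,c,\ep_0$ to depend only on $n$. One must also make sure that $\ep_0<\mathrm{diam}(Q_j)$, which holds since $Q_j\supset B_{r_0/5}(x_j)$. Once this is in place, the order of choices is: fix $\ep$, then $\delta$, then $\l_0$.
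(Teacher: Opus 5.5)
Your proposal is correct and follows the paper's proof essentially step for step. You shift by $C_j$, check $Q_j\in\mathrm{BG}\cap G(c_1)\cap T(c,\epsilon_0)$ with constants depending only on $n$, use $s\ge\delta a^\lambda(\tilde\lambda)$ and $S=\delta^2a^\lambda(\tilde\lambda)$ so that $Ss^{-1}\le\delta$, and then apply \Cref{quantative Weyl law}. Two of your details are in fact tidier than the paper's. Your choice $\delta_0=(4\mathcal{C})^{-1}$ together with $\delta a^\lambda(\tilde\lambda)>2\mathcal{C}^2$ genuinely yields $s>\mathcal{C}(2S+\mathcal{C})$; the paper's $(2\mathcal{C})^{-1}$ with $s>2\mathcal{C}^2$ needs $\lambda_0$ enlarged so that $(1-2\mathcal{C}\delta)\delta a^\lambda(\tilde\lambda)>\mathcal{C}^2$. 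Your reading of $\int_{Q_j}$ as $\int_U$ in the lower bound of \Cref{quantative Weyl law} is also the intended one.
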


\begin{proof}
Fix $j\in\{1,2,\dots,J'\}$. Recall that
$\tH^\l_{Q_j,D/N}:=H^\l_{Q_j,D/N}-C_j$. Then
\be\label{shifted eigenvalue counting}
\N^\l(\tilde\l,H^\l_{Q_j,D/N})
=
\N^\l(\tilde\l-C_j,\tH^\l_{Q_j,D/N}).
\ee
We will apply \Cref{quantative Weyl law} with
$H=\tH^\l_{Q_j,D/N}$.

By the rescaled geometric bounds
\eqref{rescaled curvature bound},
\eqref{rescaled osc radius}, and
\Cref{properties of tessellation0}, we have
\[
Q_j\in \BG(1,1)\cap G(c_1(n))\cap T(c(n),\ep_0(n)).
\]
Let $$s=\tilde\l-C_j.$$ By assumption, for $1\le j\le J'$,
$
Q_j\cap \Omega^\l_{\tilde\l-\delta a^\l(\tilde\l)}\neq\emptyset.
$
Recall $C_j:=\inf_{x\in Q_j}V^\l$,
\be\label{lower r}
s\ge \delta\, a^\l(\tilde\l).
\ee
The potential of $\tH^\l_{Q_j,D/N}$ is $V^\l-C_j$, which satisfies (note \eqref{rescaled b delta})
\be\label{upperbound of VQj}
0\le \sup_{x\in Q_j}\big(V^\l(x)-C_j\big)
\le \delta^2 a^\l(\tilde\l)=:S.
\ee
Let $\delta_0:=\Big(2\cc\big(\ep,n\big)\Big)^{-1}$,  where $\cc$ is the constant appearing in \Cref{quantative Weyl law}.  By \eqref{a goes to infty}, \eqref{lower r}, and
\eqref{upperbound of VQj}, there exists $\l_0=\l_0(\ep,\delta)$ such that,
whenever $\l>\l_0$,
\be
s>2\cc^2
\qquad\text{and}\qquad
S\ge 1,
\ee
 In
particular, for $\l\ge\l_0,\delta<\delta_0$ we have $s>\cc(2S+\cc)$. Noting that
$Ss^{-1}\leq\delta$ and \eqref{shifted eigenvalue counting}, the conclusion follows from
\Cref{quantative Weyl law}.
\end{proof}

\begin{lem}There exists $C=C(n)>0$, such that
\be\label{approach Phi lambda}
   \bigg|\sum_{j=1}^{J'}(2\pi)^{-n}\omega_n
\int_{Q_j} (\tilde\l-V^\l(x))_+^\n\,\dvol_M^\l(x)-\Phi(\l)\bigg|\leq (C\delta+C\delta^\n)\Phi(\l).\ee
\end{lem}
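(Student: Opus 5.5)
By scale invariance, the integrals on the rescaled manifold equal the original ones. Since $\tilde\l-V^\l=r_\delta(\l)^2(\l-V)$ and $\dvol^\l_M=r_\delta(\l)^{-n}\dvol_M$, we have, for each $j$,
\[
(2\pi)^{-n}\omega_n\int_{Q_j}(\tilde\l-V^\l)_+^\n\,\dvol^\l_M=(2\pi)^{-n}\omega_n\int_{Q_j}(\l-V)_+^\n\,\dvol_M .
\]
So it suffices to compare $\sum_{j\le J'}\int_{Q_j}(\l-V)_+^\n$ with $\int_M(\l-V)_+^\n$. The plan is to write $M=Q_0\cup Q_{J+1}\cup\bigcup_{j=1}^{J}Q_j$, which is a decomposition with disjoint interiors (\Cref{properties of tessellation0} together with the splitting of $D_0$). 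Then
\[
\Phi(\l)-\sum_{j=1}^{J'}(2\pi)^{-n}\omega_n\int_{Q_j}(\l-V)_+^\n
\]
is the sum of the contributions of $Q_0$, of $Q_{J+1}$ and of $Q_j$ for $J'<j\le J$. Each of these contributions is nonnegative, so it remains to bound each one by $(C\delta+C\delta^\n)\Phi(\l)$.

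\emph{Step 1 ($Q_{J+1}$).} On $Q_{J+1}$ we have $V\ge\l+\delta a(\l)>\l$, so the integrand vanishes and the contribution is zero.

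\emph{Step 2 ($Q_0$).} On $Q_0\subset\Omega_{d_\delta(\l)}$ we have $(\l-V)_+^\n\le\l^\n$. Hence the contribution is at most $(2\pi)^{-n}\omega_n\l^\n\sigma(d_\delta(\l))$. By \eqref{spectral on d delta is small}, this is at most $\delta\Phi(\l)$.

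\emph{Step 3 ($Q_j$, $J'<j\le J$).} For these indices $Q_j\cap\Omega_{\l-\delta a(\l)}=\emptyset$, so $V\ge\l-\delta a(\l)$ on $Q_j$ and therefore $(\l-V)_+^\n\le(\delta a(\l))^\n$. The integrand also vanishes where $V\ge\l$, so only the part of $Q_j$ inside $\Omega_\l\setminus\Omega_{\l-\delta a(\l)}$ contributes. Summing over these $j$, the total contribution is at most
\[
(2\pi)^{-n}\omega_n\,\delta^\n a(\l)^\n\,\bigl|\Omega_{\l+\delta a(\l)}\setminus\Omega_{\l-\delta a(\l)}\bigr| .
\]
By the first item of \Cref{d delta is smaller than lambda}, this is at most $2\delta^\n(2\pi)^{-n}\omega_n\int_{\Omega_{\l-a(\l)}}(\l-V)_+^\n\le 2\delta^\n\Phi(\l)$.

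Adding Steps 1--3 gives the claim with $C=2$, independent of $\l$ and $\delta$. The main point needing care is Step 3: one must restrict to the set where the integrand is nonzero, so that only the shell $\Omega_{\l+\delta a}\setminus\Omega_{\l-\delta a}$ is counted rather than the whole of $Q_j$. One must also use that the $Q_j$ have disjoint interiors, with boundaries of measure zero by the Lipschitz property, so that summing over $j$ does not overcount. With both points in place, no geometric input beyond the tessellation properties is required.
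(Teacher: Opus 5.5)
Your proof is correct and follows the paper's argument. You use the same splitting $\Phi(\l)=\sum_{j=0}^{J+1}(2\pi)^{-n}\omega_n\int_{Q_j}(\l-V)_+^{\n}$, the bound on $Q_0$ via \eqref{spectral on d delta is small}, the vanishing on $Q_{J+1}$, and the first item of \Cref{d delta is smaller than lambda} for $J'<j\le J$. The one small difference is in that last step: you discard the set where $V\ge\l$ and land in the shell $\Omega_{\l+\delta a(\l)}\setminus\Omega_{\l-\delta a(\l)}$, so the proposition applies with $\delta$ directly. The paper instead bounds by $\Omega_{\l+2\delta a(\l)}\setminus\Omega_{\l-2\delta a(\l)}$, which implicitly relies on the oscillation control to place those $Q_j$ inside $\Omega_{\l+2\delta a(\l)}$; your version avoids that.
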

\begin{proof}All constants appearing in the proof depend only on $n$.
Note that
\be\label{decompose integral}
\Phi(\l)=\sum_{j=0}^{J+1}(2\pi)^{-n}\omega_n
\int_{Q_j} (\l-V(x))_+^\n\,\dvol_M(x).
\ee
Since $Q_0\subset\Omega_{d_\delta(\l)}$, by \eqref{spectral on d delta is small},
\be\label{integral density on Q0}
\int_{Q_0} (\l-V(x))_+^\n\,\dvol_M(x)
\le \l^\n|Q_0|
\le \l^\n|\Omega_{d_\delta(\l)}|
\le C\delta\Phi(\l).
\ee
It is straightforward to see that
\be\label{integral density on Q J+1}
\int_{Q_{J+1}} (\l-V(x))_+^\n\,\dvol_M(x)=0.
\ee
Similar to the proof of \eqref{Weyl law Qj, j big}, by
\eqref{spectrum between l-a(l) an l+a(l)},
\be\label{integral density on Qj, j large}
\sum_{j=J'+1}^J\int_{Q_j} (\l-V(x))_+^\n\,\dvol_M(x)
\le C\big(\delta a(\l)\big)^\n
|\Omega_{\l+2\delta a(\l)}\setminus\Omega_{\l-2\delta a(\l)}|
\le C'\delta^\n\Phi(\l).
\ee
The lemma then follows from
\eqref{decompose integral}--\eqref{integral density on Qj, j large}
and the fact that
\[
\int_{Q_j}(\tilde\l-V^\l(x))_+^\n\,\dvol_M^\l(x)
=
\int_{Q_j}(\l-V(x))_+^\n\,\dvol_M(x).
\]
\end{proof}
By \eqref{Weyl law Qj, j small} and \eqref{approach Phi lambda}, we have
\begin{cor}
    For each $\ep\in(0,1)$, there exist constants
$C=C(\ep,n), C'=C'(n)>0, C''=C''(n)>0$, and $\delta_0=\delta_0(\ep,n)$ such that whenever $\delta<\delta_0$ and
$\l\ge\l_0$ for some $\l_0=\l_0(\delta,\ep)>0$, 
\begin{equation}\label{sum Weyl law Qj, j small}
\begin{aligned}
\sum_{j=1}^{J'}\N^\l(\tilde\l,H^\l_{Q_j,N})
&\le
\Big(1+C'\ep+C\delta\Big)\Big(1+C''\delta+C''\delta^\n\Big)\Phi(\l),
\\
\sum_{j=1}^{J'}\N^\l(\tilde\l,H^\l_{Q_j,D})
&\ge
\Big(1-C'\ep-C\delta\Big)\Big(1-C''\delta-C''\delta^\n\Big)\Phi(\l).
\end{aligned}
\end{equation}
\end{cor}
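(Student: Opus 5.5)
The corollary follows by summing \eqref{Weyl law Qj, j small} over $1\le j\le J'$ and then replacing the resulting sum of local Weyl integrals by $\Phi(\l)$ via \eqref{approach Phi lambda}. Fix $\ep\in(0,1)$ and take $\delta_0(\ep,n)$, $C(\ep,n)$, $C'(n)$ and $\l_0(\delta,\ep)$ as in the preceding lemma. The point to check is that these constants do not depend on $j$: in that lemma they depend only on $(\ep,n)$, or on $(\delta,\ep)$ for $\l_0$. This holds because every $Q_j$, $1\le j\le J'$, lies in the same class $\BG(1,1)\cap G(c_1(n))\cap T(c(n),\ep_0(n))$, and the quantities $S=\delta^2a^\l(\tilde\l)$ and $s\ge\delta a^\l(\tilde\l)$ are controlled uniformly in $j$.

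For the upper bound, write
\[
I_j:=(2\pi)^{-n}\omega_n\int_{Q_j}(\tilde\l-V^\l(x))_+^\n\,\dvol^\l_M(x).
\]
Since $1+C'\ep+C\delta$ is independent of $j$, summing the Neumann inequality gives
\[
\sum_{j=1}^{J'}\N^\l(\tilde\l,H^\l_{Q_j,N})\le(1+C'\ep+C\delta)\sum_{j=1}^{J'}I_j .
\]
By \eqref{approach Phi lambda}, with its constant renamed $C''=C''(n)$, we have $\sum_{j=1}^{J'} I_j\le(1+C''\delta+C''\delta^\n)\Phi(\l)$. This yields the first inequality of \eqref{sum Weyl law Qj, j small}.

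For the lower bound, sum the Dirichlet inequality in the same way. Use \eqref{approach Phi lambda} in the other direction, $\sum_{j} I_j\ge(1-C''\delta-C''\delta^\n)\Phi(\l)$. Multiplying this by $1-C'\ep-C\delta$ preserves the inequality as long as that factor is nonnegative. If necessary, shrink $\delta_0$ and restrict $\ep$ so that $C'\ep+C\delta<1$. If the factor were negative, the claimed lower bound would be trivial anyway, since the left side is nonnegative and the right side would then be a product of a negative and a positive number (for small $\delta$). Either way the second inequality holds.

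There is no real obstacle here. The only thing to watch is the uniformity of the constants in $j$ and the sign of the lower-bound factor, both of which are handled above.
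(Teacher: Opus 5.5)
Your proposal is correct and follows the paper's proof, which obtains the corollary directly by summing \eqref{Weyl law Qj, j small} over $1\le j\le J'$ (with constants uniform in $j$) and then applying \eqref{approach Phi lambda}. On the sign of $1-C'\ep-C\delta$: you cannot restrict $\ep$, since the statement must hold for every $\ep\in(0,1)$, but your trivial-case argument is valid and only needs $\delta_0$ small enough that $C''\delta+C''\delta^{\n}\le 1$, a condition depending only on $n$.
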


Now by \eqref{domain monotonicity}, \eqref{Weyl law Q0}, \eqref{Weyl law QJ}, \eqref{Weyl law Qj, j big}, and \eqref{sum Weyl law Qj, j small}, for any $\ep > 0$, there exists $\delta=\delta(\ep,n) > 0$ small enough such that there exists $\l_0 = \l_0(\delta,\ep)$, and whenever $\l > \l_0$,
\[
\big| \N^\l(\tilde\l) - \Phi^\l(\tilde\l) \big| \leq \ep \, \Phi(\l).
\]
By \eqref{eigencouting and scaled eigencouting}, this is equivalent to
\[
\left| \N(\l) - \Phi(\l) \right| \leq \ep \, \Phi(\l),\quad\l\geq\l_0.
\]
This concludes the proof of \Cref{main}.

\def\subsec{1}
\if\subsec0
\subsection{Pointwise Weyl law}
\begin{thm}
For a.e.\ $x$ satisfying $A<V(x)<\lambda-\sqrt\delta\, \da(\lambda)/8$ for some large $A>0$, there exists a constant $C>0$, independent of $\delta$ and $\lambda$, such that
\begin{equation}
\big(1- C(\delta + c_\delta(V(x)))\big)\, (\lambda - V(x))^{\n} 
\;\le\; e(\l,x,x)\;\le\; 
\big(1+ C(\delta + c_\delta(V(x)))\big)\, (\lambda - V(x))^{\n}.
\end{equation}
\end{thm}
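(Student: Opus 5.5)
The plan is to localize at a single point $x$ and apply \Cref{KHL2} pointwise, as sketched in \cref{quantatitative KHL}, but with $\mu_x$ the \emph{global} spectral function rather than the spectral function of a piece $Q_j$. Domain monotonicity is not available pointwise, so the localization has to be carried out on the heat kernel level.

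\textbf{Step 1 (local frame).} Fix $x$ with $A<V(x)<\l-\sqrt\delta\,a(\l)/8$ and put $\mu:=V(x)$. Since $x\in\Omega_{\mu+\delta a(\mu)}\setminus\Omega_{d_\delta(\mu)}$ for $A$ large, I rescale by $r_\delta(\mu)$ as in \cref{rescaling}, using the invariants $c_\delta(\mu)$ at the level $\mu$. In the rescaled metric, the ball $B:=B_1(x)$ then has curvature and its first two derivatives bounded by $1$, injectivity radius $\ge1$, and $\osc_B V\le\delta^2a(\mu)$. The smallness of these rescaled quantities is controlled by $c_\delta(\mu)$. Let $C:=\inf_B V$ and consider the shifted measures $\mu_x(r):=e_{H-C}(r,x,x)$ and $\nu_x(r):=(2\pi)^{-n}\omega_n(r-(V(x)-C))_+^{\n}$, so that $e(\l,x,x)=\mu_x(\l-C)$.

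\textbf{Step 2 (hypotheses of \Cref{KHL2}).} By \Cref{ob C} and \Cref{estimate on Qj}, conditions \eqref{exponential bound} and \eqref{uniform limit of quotient} hold with $\l_1=2\delta^2a$ and $L=L(n)$. The restriction $V(x)<\l-\sqrt\delta\,a/8$ gives $\l-C\ge\sqrt\delta\,a/8\gg C_1\l_1$ for $\delta$ small, which is needed to apply \Cref{KHL2} at $s=\l-C$.

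\textbf{Step 3 (localization of the heat kernel).} This is the main obstacle. I need
$$|K_{H}(t,x,x)-K_{H,B,D}(t,x,x)|\le \beta_1(t)\,t^{-\n}e^{-t(V(x)-C)}$$
with $\beta_1(t)\to0$ as $t\to0$, where $K_{H,B,D}$ is the Dirichlet heat kernel of $H$ on $B$. No global Gaussian bound is available, since the geometry away from $B$ is wild. I would first try the strong Markov / Duhamel representation: the difference equals the expectation over Brownian paths that exit $B$ before time $t$. By domination $0\le K_H\le K_\Delta$, this is bounded by the local Gaussian estimate on $B$ (\Cref{Varo} and \Cref{Cone implies SObolev}, applied to $B$) times the exit probability, which is $\lesssim e^{-c/t}$. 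The difficulty is the return leg: a path that leaves $B$ must come back to $x$, and its contribution is a global on-diagonal quantity. To control it I would use $V\ge1$ together with the tameness of \Cref{tameness}, which gives a crude on-diagonal bound for $K_H$ everywhere of the form $t^{-\n}$ times a factor polynomial in $V$. The factor $e^{-tV}$ should absorb that polynomial loss.

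\textbf{Step 4 (conclusion).} On $B$, \Cref{heat remainder estimate} with $\epsilon_0$ a fixed dimensional constant gives the error $CSt\cdot t^{-\n}e^{-t(V(x)-C)}$ with $S=\delta^2a$. Here $e^{-t(V(x)-C)}$ is bounded below for $t<(2S)^{-1}$, as in \eqref{pointwise weyl law}. Hence \eqref{error estimate for laplace transform} holds with $\beta(t)=CSt+\beta_1(t)$. \Cref{KHL2} at $s=\l-C$ then gives a relative error $\ep+\ep L+C_1L\beta(C_1s^{-1})$. With $s\ge\sqrt\delta\,a/8$, we get $S/s\lesssim\delta^{3/2}$. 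Finally, choosing $\ep\sim\delta$ and using $\nu_x(\l-C)=(2\pi)^{-n}\omega_n(\l-V(x))^{\n}$, I obtain the claimed two-sided bound with error $C(\delta+c_\delta(V(x)))$, where the $c_\delta$ term records the curvature corrections in the rescaled heat expansion and in the localization.
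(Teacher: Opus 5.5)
The paper gives no proof to compare with. This statement sits in a switched-off block, and its ``proof'' is a single line introducing the Gaussian model $K_0(t,x,y)=(4\pi t)^{-n/2}e^{-d^2(x,y)/4t}e^{-tV(x)}$. Judged on its own, your plan has a genuine gap at Step 3: you correctly name it as the crux, but the mechanism you propose does not close it. There \Cref{KHL2} needs a bound on the \emph{shifted} transform,
$e^{tC}\,|K_H-K_{H,B,D}|(t,x,x)\le\beta_1(t)\,t^{-n/2}$ for $t$ of order $(\lambda-C)^{-1}$. But $tC\approx V(x)/(\lambda-V(x))$ is unbounded as soon as $a(\lambda)=o(\lambda)$, which is exactly the regime the paper targets. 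Domination $0\le K_H\le K_\Delta$, or on-diagonal bounds polynomial in $V$, discard the Feynman--Kac factor $e^{-\int V(X_s)\,ds}$ on the part of the path outside $B$. (Such polynomial bounds are all \Cref{tameness} can give; it says nothing about how $V$ varies.) After the shift you are left with an amplification of order $e^{tC/2}$, coming from paths that leave $B$ before time $t/2$, against an exit gain of only $e^{-c\,r_\delta^2/t}$. The factor $e^{-tV(x)}$ cannot absorb this, since it is the very quantity you compare against. Concretely, take $V=r(x)^\alpha$ with $1<\alpha<2$ on $\mathbb{H}^n$, which the paper lists as satisfying \eqref{assumption}. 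There $r_\delta\approx\delta^2$, and at points with $\lambda-V(x)\approx\sqrt\delta\,a(\lambda)\approx\sqrt\delta\,\lambda^{1-1/\alpha}$ you get $tC\approx\delta^{-1/2}\lambda^{1/\alpha}\gg\delta^{9/2}\lambda^{1-1/\alpha}\approx r_\delta^2/t$, so your error bound is exponentially large. What is missing is an Agmon/large-deviation estimate: Brownian paths of duration $t$ from $x$ must not reach regions where $V$ is substantially below $C$ with non-negligible Feynman--Kac weight. Proving this requires chaining the oscillation bound across all of $\Omega_{\mu+\delta a(\mu)}\setminus\Omega_{d_\delta(\mu)}$, and controlling paths that enter $\Omega_{d_\delta(\mu)}$, where nothing is known about $V$. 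Even the purely geometric part needs an off-diagonal Gaussian bound for the global kernel between $\partial B$ and $x$. An on-diagonal bound for the return leg produces $(t-\tau_B)^{-n/2}$, which is not integrable near $\tau_B=t$. The paper never meets this difficulty: after Dirichlet--Neumann bracketing, every heat kernel it uses lives on a piece $Q_j$ on which $0\le V-C_j\le\delta^2a(\lambda)$.

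The second gap is the rate claimed in Step 4. The constant $C_1(\epsilon)$ in \Cref{KHL2} comes from a Stone--Weierstrass approximation and is not quantified, so you cannot simply choose $\epsilon\sim\delta$. Just to have $\lambda-C\ge C_1\lambda_1$ you would need $C_1(\delta)\lesssim\delta^{-3/2}$. In addition, $C_1L\beta(C_1s^{-1})$ contains the term $C_1(\delta)^2S/s\approx C_1(\delta)^2\delta^{3/2}$, which need not be $O(\delta)$. So even granting Step 3, your argument only gives ``relative error $\le\epsilon$ once $\delta\le\delta(\epsilon)$ and $V(x)$ is large''. It does not give $C(\delta+c_\delta(V(x)))$ with $C$ independent of $\delta$, and the $c_\delta(V(x))$ term is asserted rather than derived. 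This is not mere bookkeeping. Classical Tauberian remainder theory (Freud, Korevaar, Ganelius) shows that, for monotone functions and in general, Laplace-transform information with relative error $\eta$ yields only a relative error of order $1/\log(1/\eta)$, and this is sharp. Here that means $1/\log(1/\delta)$, so a linear rate needs input beyond the heat kernel. A minor, fixable point: in Step 2 the single letter $a$ stands both for $a(V(x))$, which controls $\mathrm{osc}_B V$, and for $a(\lambda)$ from the hypothesis. Directly from \eqref{defn of a} one gets $a(\mu)\le a(\lambda)+(\lambda-\mu)$ for $\mu<\lambda$, which does justify $S/s\lesssim\delta^{3/2}$, but you need to state it.
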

\begin{proof}

 Let $K_0(t,x,y)=(4\pi t)^{-\n}e^{-\frac{d^2(x,y)}{4t}}e^{-tV(x)}$
\end{proof}
\fi

\section{Sharpness of our conditions}\label{hyperbolic example}

\subsection{The example on $\R\times S^1$}\label{R times s1}

On $\R$, let $V$ be a smooth function satisfying
\[
V(x)=\sqrt{\ln(|x|)} \qquad (|x|\gg1).
\]
By our main result 
\Cref{main} (or \cite[Theorem~4.1]{tachizawa1992eigenvalue}), the one–dimensional operator
\[
H^{\R}=-\p_x^2+V(x)
\]
obeys the Weyl asymptotic
\begin{equation}\label{eq simple1}
\N(\lambda,H^{\R})
\sim \pi^{-1}\!\int_{\R}(\lambda-V(x))_{+}^{1/2}dx,
\qquad \lambda\to\infty,
\end{equation}
where $\N(\lambda,H^{\R})$ denotes the eigenvalue counting function of $H^{\R}$.

A direct computation gives
\begin{equation}\label{eq simple 10}
\begin{aligned}
&\quad\int_{1}^{e^{\lambda^{2}}}\!\bigl(\lambda-\sqrt{\ln x}\bigr)^{1/2}dx
=2\int_{0}^{\lambda} (\lambda-r)^{1/2}r e^{r^{2}}dr  \\
&=2e^{\lambda^{2}}\int_{0}^{\lambda} s^{1/2}(\lambda-s)e^{-2\lambda s+s^{2}}ds \ge e^{\lambda^{2}}\int_{0}^{\lambda/2} s^{1/2}\lambda e^{-2\lambda s}ds\\
&= e^{\lambda^{2}}\l^{-\half}\int_{0}^{\lambda^2/2} u^{1/2}e^{-2u}du \;\gtrsim\;e^{\lambda^{2}}\lambda^{-1/2}.
\end{aligned}
\end{equation}
Let $S^{1}$ be the unit circle, $\Delta^{S^{1}}$ its Laplacian, and consider the 
product operator on $\R\times S^{1}$:
\[
H^{\R\times S^{1}} := H^{\R} + \Delta^{S^{1}}.
\]
Applying the same change of variables as in \eqref{eq simple 10} yields
\begin{equation}\label{eq simple2}
\begin{aligned}
&\quad\int_{1}^{e^{\lambda^{2}}}\!\bigl(\lambda-\sqrt{\ln x}\bigr)\,dx
=2e^{\lambda^{2}}\int_{0}^{\lambda} s(\lambda-s)e^{-2\lambda s+s^{2}}ds \\
&\le e^{\lambda^{2}}\int_{0}^{\lambda} s\lambda e^{-\lambda s}ds=e^{\lambda^{2}}\l^{-1}\int_{0}^{\lambda^2} u e^{-u}du
 \;\lesssim\; \,e^{\lambda^{2}}\lambda^{-1}.
\end{aligned}
\end{equation}
Meanwhile, note that $\Delta^{S^1}$ has eigenvalue $0$. Consequently,
\begin{equation}\label{eq simple3}
\N(\lambda,H^{\R\times S^{1}})\ge \N(\lambda,H^{\R}),
\end{equation}
where $\N(\lambda,H^{\R\times S^1})$ denotes the eigenvalue counting function of $H^{\R\times S^1}$.

Combining \eqref{eq simple1}–\eqref{eq simple3}, we conclude that
\be\label{failure}
\int_{S^{1}}\!\int_{\R}(\lambda-V(x))_{+}\,dx\,d\theta
= o\!\left(\N(\lambda,H^{\R\times S^{1}})\right),
\qquad \lambda\to\infty,
\ee
and therefore the classical Weyl law fails for the product operator 
$H^{\R\times S^{1}}$.

Lastly, one verifies easily that $a(\lambda)\lesssim \lambda^{-1}$ and 
$R_{\delta}(\lambda)=S_\delta(\l)=T_\delta(\l)=0$. 
Since the injectivity radius of $\R\times S^{1}$ is smaller than $2\pi$, we  have $b_{\delta}(\lambda)\le 2\pi$.  
Thus,  \eqref{assumption} fails.

\begin{rem}\label{remark on counter example}
    Using a similar method, one shows that if we replace $V(x)$  by $c\cdot(\ln|x|)^\a$ with $\a\in(0,1),c>0$, then \eqref{failure} still holds. Moreover, a similar argument implies that there exists $\epsilon_0>0$ sufficiently small such that, after replacing $V(x)$  with $\epsilon_0 \ln|x|$, we have
\be\label{weyl law fails for r}
\limsup_{\l\to\infty}
\frac{N(\l)}{(2 \pi)^{-2} \omega_{2}\int_{\R\times S^1}(\l-V)_+\dvol_{\R\times S^1}}> 1,
\ee
and hence Weyl’s law fails in this case as well.
\end{rem}

\def\bH{\mathbb{H}}

\subsection{Examples on hyperbolic spaces}

Let $(\bH^{n}, g^{T\bH^{n}})$ denote $n$–dimensional hyperbolic space. Let $$V(x):=r(x)^\a,\quad0<\a<1$$ where $r(x)$ denotes the hyperbolic distance between $x$ and $0$.  
Let $\N(\l)$ be the eigenvalue counting function of $H:=\Delta+V$.

It is easily checked that $a(\lambda)\lesssim \lambda^{1-\a^{-1}}$, 
$R_{\delta}(\lambda)\equiv 1$, $S_\delta(\l)=T_\delta(\l)=0$ and $b_{\delta}(\lambda)\lesssim \delta^{2}$, 
so the condition \eqref{assumption} fails for this example.

For simplicity, we restrict attention to the case $\a=\half$ and $n=3$.
The general case $\a \in (0,1)$ and arbitrary $n$ can be treated in the same way.
 We will show that:

\begin{thm}\label{Weyl law fails}
    $\N(\l)=o\!\Big(\int_{\bH^3} (\l - V)_+^{3/2}\,\dvol_{\bH^3} \Big), \l\to\infty.$
\end{thm}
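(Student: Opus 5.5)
The plan is to compare both sides directly: compute the asymptotics of the Weyl integral, and bound $\N(\l)$ from above by an exponentially smaller quantity. The mechanism is the spectral gap of hyperbolic space. The bottom of the spectrum of $\Delta$ on $\bH^3$ is $(n-1)^2/4=1$. Near the classically allowed boundary $\{V\approx\l\}$ we have $\l-V\ll1$, so the gap forbids essentially all of the phase space that the Weyl integral counts there. Since the volume grows like $e^{2r}$, this boundary layer carries almost the whole Weyl integral.

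\textbf{Step 1: lower asymptotics of the Weyl side.} Since $\{V<\l\}=\{r<\l^2\}$ and $\dvol=4\pi\sinh^2 r\,dr\,d\theta$, substituting $r=\l^2-u$ gives $\l-\sqrt r\approx u/(2\l)$. Exactly as in \eqref{eq simple 10}, this yields
\[
\int_{\bH^3}(\l-V)_+^{3/2}\dvol \gtrsim e^{2\l^2}\l^{-3/2}.
\]

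\textbf{Step 2: localization to a ball.} Fix small constants $\eta,\theta$, say $\eta=\theta=1/8$. Take radial cutoffs $\chi_1^2+\chi_2^2=1$, with $\chi_1$ supported in $\{r\le(\l+\eta)^2\}$ and $\chi_2$ supported in $\{r\ge\l^2\}$, and $|\nabla\chi_i|\lesssim(\l\eta)^{-1}$; the transition layer in $r$ has width $\approx\l\eta$. The IMS localization formula gives
\[
H\ge \chi_1(H-C\l^{-2})\chi_1+\chi_2(H-C\l^{-2})\chi_2 .
\]
On $\supp\chi_2$ we have $V\ge\l$. To get a strict margin above $\l$ I would shift the cutoff region slightly, or absorb the $O(\l^{-2})$ error using $\Delta\ge1$. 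Either way the second term is $\ge\l$ in form sense, so by min--max
\[
\N(\l)\le \N\big(\l+C\l^{-2},\,H_{B,D}\big),
\qquad B=\{r\le(\l+\eta)^2\},
\]
where $H_{B,D}$ is the Dirichlet restriction of $H$ to $B$.

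\textbf{Step 3: spectral gap and heat trace.} By domain monotonicity, $\Delta_{B,D}\ge1$. Hence
\[
H_{B,D}\ge \theta\Delta_{B,D}+(1-\theta)+V.
\]
For this minorant, the Golden--Thompson/Feynman--Kac bound, together with the domination of the Dirichlet heat kernel by the $\bH^3$ heat kernel $(4\pi s)^{-3/2}(s/\sinh s)\,e^{-s}$ on the diagonal (up to a bounded factor), gives for $\mu=\l+C\l^{-2}$:
\[
\N(\mu)\le e^{t\mu}\,\mathrm{Tr}\,e^{-t(\theta\Delta_{B,D}+1-\theta+V)}\lesssim (\theta t)^{-3/2}\int_0^{(\l+\eta)^2} e^{\,t(\mu-1+\theta)+2r-t\sqrt r}\,dr .
\]
Write $\rho=\sqrt r\in[0,\l+\eta]$ and choose $t=\l$. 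The exponent is $2\rho^2-t\rho+t(\l-1+\theta)+O(1)$, which is convex in $\rho$, so its maximum is attained at an endpoint.
\begin{itemize}
\item At $\rho=0$ the exponent is about $\l^2$.
\item At $\rho=\l+\eta$ it is about $2\l^2+\l(4\eta-\eta-1+\theta)=2\l^2-\tfrac{5}{8}\l$.
\end{itemize}
Thus $\N(\l)\lesssim \mathrm{poly}(\l)\,e^{2\l^2-c\l}$, which is $o\big(e^{2\l^2}\l^{-3/2}\big)$. Combined with Step 1, this proves \Cref{Weyl law fails}.

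\textbf{Main obstacle.} The delicate point is Step 2: the localization error and the margin must be controlled with explicit dependence on $\l$ and $\eta$. The loss $4\eta\l$ in the volume factor has to be beaten by the gain $t\eta+t(1-\theta)$ coming from the spectral gap. The whole argument succeeds only because the cutoff width $\l\eta$ makes the IMS error $O(\l^{-2})$, which is negligible against the gap of size $1$. If the Golden--Thompson step is awkward on a bounded Dirichlet domain, I would instead apply \Cref{lem:spectral-bound-from-heat-kernel} pointwise and integrate the diagonal heat kernel bound $K_{H_{B,D}}(t,x,x)\le e^{-t(1-\theta)}K_{\theta\Delta}(t,x,x)\,e^{-tV(x)}$, obtained from Feynman--Kac.
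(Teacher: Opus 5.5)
Your proof is correct, and it takes a genuinely different and considerably shorter route than the paper.

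\textbf{The paper's route.} The paper uses Neumann bracketing, $\N(\lambda)\le\N(\lambda,H_{Q_0,N})+\N(\lambda,H_{Q_1,N})$ with $Q_1=\Omega_{\lambda-1/2}$. It controls the inner ball by its volume, using Wang's Neumann heat kernel estimates. It then splits the exterior $Q_0$ into three shells and works with pointwise counting functions:
\begin{itemize}
\item a short-time bound on $Q_0^1$;
\item Agmon decay of eigenfunctions on the far region $Q_0^3$;
\item on the critical shell $Q_0^2$ near $\{V=\lambda\}$, a constant shift plus a large-time ($t=\lambda^{5/12}$) bound on the Neumann kernel of $Q_0$. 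This bound comes from a Duhamel comparison with the cut-off hyperbolic kernel, and it is where the factor $e^{-t}$, i.e.\ the spectral gap, enters.
\end{itemize}

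\textbf{Your route.} Your IMS localization with a Dirichlet inner piece replaces all of this. Since $\chi_2u\in H^1(\mathbb{H}^3)$, the bound $V\ge\lambda$ on $\operatorname{supp}\chi_2$ together with the bottom of the spectrum $1$ disposes of the whole exterior by one form inequality, so no Agmon estimate is needed. On the ball $B$, Dirichlet domain monotonicity gives both the gap and domination by the hyperbolic kernel, whose diagonal is $(4\pi s)^{-3/2}e^{-s}$. One Golden--Thompson trace bound at $t=\lambda$ and a convexity computation then finish, with an explicit gain $e^{-c\lambda}$ against $e^{2\lambda^2}\lambda^{-3/2}$. The localization is essential: on all of $\mathbb{H}^3$ the same trace bound diverges, since $\int e^{2r-t\sqrt r}\,dr=\infty$.

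\textbf{What each buys.} Your route uses only standard facts. It also extends to $V=r^\alpha$, $\alpha\in(0,1)$, on $\mathbb{H}^n$ by taking $t\approx\lambda^{1/\alpha-1}$ and $\eta$ small. The paper's route stays inside the Neumann-bracketing/pointwise-counting framework of the main theorem. It also isolates the shell near $\{V=\lambda\}$ as the source of the failure, matching the heuristic remark before the theorem.

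\textbf{Minor points.}
\begin{itemize}
\item At $\rho=\lambda+\eta$ the exponent is $2\lambda^2+\lambda(3\eta-1+\theta)=2\lambda^2-\frac12\lambda$, not $2\lambda^2-\frac58\lambda$. This is harmless.
\item The diagonal hyperbolic kernel has no factor $s/\sinh s$, because $d/\sinh d=1$ at $d=0$. Since it already contains $e^{-s}$, your $\theta$-splitting is superfluous: directly $\mathrm{Tr}\,e^{-tH_{B,D}}\le(4\pi t)^{-3/2}e^{-t}\int_B e^{-tV}$.
\item Discard the pointwise fallback. Neither the operator inequality nor Feynman--Kac yields $K_{H_{B,D}}(t,x,x)\le e^{-t(1-\theta)}K_{\theta\Delta}(t,x,x)e^{-tV(x)}$ pointwise. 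Bridges from $x$ visit points where $V<V(x)$, and for large $t$ the left side behaves like $e^{-t\lambda_1}\phi_1(x)^2$. The estimate holds only after integrating in $x$, which is exactly Golden--Thompson.
\item Lemma~\ref{lem:spectral-bound-from-heat-kernel} is stated at $t=\lambda^{-1}$, where the gap contributes only $e^{-1/\lambda}$. The trace version in your Step~3 is the right one.
\end{itemize}
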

\begin{rem} 
  Before giving the proof, we present a naive argument. The heat kernel
$k(t,x,y)$ of $\Delta$ on three-dimensional hyperbolic space is given by
\be\label{heat kernel on hyperbolic space}
k(t,x,y)
= \frac{e^{-t}}{(4\pi t)^{3/2}}\,\frac{d(x,y)}{\sinh(d(x,y))}\,
e^{-\frac{d^2(x,y)}{4t}}.
\ee
Due to the presence of the factor $e^{-t}$ above, one may naively expect that
\[K_H(t,x,x)\sim (4\pi t)^{-\frac{3}{2}}e^{-t(V(x)+1)}, \quad t\to0.\]
Thus naively
\[
\N(\l)\sim (2\pi)^{-3}\omega_3
\int_{\bH^3}(\l-V-1)_+^{\frac{3}{2}}\,\dvol_{\bH^3},
\qquad \l\to\infty.
\]
Now if $V$ grows sufficiently slowly, 
$
\int_{\bH^3}(\l-V-1)_+^{\frac{3}{2}}\,\dvol_{\bH^3}
=
o\Big(
\int_{\bH^3}(\l-V)_+^{\frac{3}{2}}\,\dvol_{\bH^3}
\Big).
$
\end{rem}

Now we proceed to the rigorous proof.
First, by a direct computation analogous to \eqref{eq simple 10}, 
\begin{equation}\label{asymptotic of classical counting}
\int_{\bH^3} (\lambda - V)_{+}^{3/2}\,\dvol_{\bH^3}
\;\gtrsim\; \frac{\,e^{2\lambda^{2}}}{\lambda^{3/2}}.
\end{equation}
Recall that $\Omega_{\l}:=\{V(x)<\l\}$. It is straightforward to check that for any $\delta>0$,
\be\label{gap}
\mathrm{dist}\big(\Omega_{\l},\,\bH^3\setminus \Omega_{\l+\delta}\big)\geq 2\delta\l,
\ee
and 
\be\label{volume}
|\Omega_\l|\approx e^{2\l^2}.
\ee

Let 
\[
a=\tfrac12,\qquad 
Q_0:=\bH^3\setminus\Omega_{\l-a},
\qquad 
Q_1:=\Omega_{\l-a}.
\]
\newpage
Below is a direct corollary of \cite[Theorem~3.2]{wang1997global}:

\begin{prop}\label{neumann heat kernel estimates}
Let $k_i$, $i=0,1$, be the Neumann heat kernel of the Beltrami--Laplace
operator $\Delta$ on $Q_i$. Then if $\lambda>a$ is large enough, there exist $\lambda$-independent
constants $C_1,C_2>0$ such that for any $t>0$, $\delta>0$, and
$x,y\in Q_i$, one has
\[
0\le k_i(t,x,y)
\le (1+\delta)^{C_1}e^{\frac{10}{\delta}}
\big(\sinh(2\sqrt{t})-2\sqrt{t}\big)^{-1}
\exp\!\Big(-\frac{d^2(x,y)}{(4+\delta)t}\Big)
e^{C_2\delta t}.
\]
\end{prop}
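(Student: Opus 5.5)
The plan is to deduce the estimate from Wang's global Neumann heat kernel bound \cite[Theorem~3.2]{wang1997global}. For a domain whose Ricci curvature is bounded below by $-K$ and whose boundary has second fundamental form bounded below by $-\sigma$, that theorem gives, for every $\delta>0$,
\[
k(t,x,y)\le \frac{(1+\delta)^{C}}{\sqrt{|B(x,\sqrt t)\cap Q|\,|B(y,\sqrt t)\cap Q|}}\exp\Big(-\frac{d^2(x,y)}{(4+\delta)t}+C\delta t\Big),
\]
with $C$ depending only on $n,K,\sigma$. So there are two tasks: verify these curvature hypotheses uniformly in $\lambda$ for $Q_0$ and $Q_1$, and bound the relative ball volumes from below by $\sinh(2\sqrt t)-2\sqrt t$, up to factors that the statement allows.

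\textbf{Curvature hypotheses.} In $\bH^3$ we have $\mathrm{Ric}=-2$. Since $V=r^{1/2}$, the set $Q_1=\Omega_{\l-a}$ is the geodesic ball of radius $\rho:=(\l-\tfrac12)^2$, and $Q_0$ is the complement of that open ball. The boundary is a geodesic sphere of radius $\rho$, whose second fundamental form equals $\coth\rho$ times the metric.
\begin{itemize}
\item For $Q_1$ the boundary is convex, so one may take $\sigma=0$.
\item For $Q_0$ the second fundamental form is $-\coth\rho\ge-2$ once $\lambda$ is large, so one may take $\sigma=2$.
\end{itemize}
Hence Wang's theorem applies to both domains with $\lambda$-independent constants $C_1,C_2$.

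\textbf{Volume lower bound.} Recall that $|B(x,s)|=\pi(\sinh 2s-2s)$ in $\bH^3$. I would show the following.
\begin{itemize}
\item For $s\le 1$ and every $x\in Q_i$, $|B(x,s)\cap Q_i|\ge c\,(\sinh 2s-2s)$. This holds because the boundary sphere has huge radius, so near $\partial Q_i$ it is uniformly close to a horosphere-type hypersurface. The intersection then contains a fixed fraction of the ball, via an interior cone argument at scale $1$.
\item For $s\ge1$, monotonicity gives $|B(x,s)\cap Q_i|\ge c$.
\end{itemize}
The resulting loss, at most $\sinh(2s)\lesssim e^{2\sqrt t}$, is then absorbed using $2\sqrt t\le \delta t+\delta^{-1}$. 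This is exactly where the factors $e^{10/\delta}$ and an enlarged $e^{C_2\delta t}$ come from. After rescaling $\delta$ by a constant, this yields the stated form.

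The main obstacle is making the small-scale relative volume bound genuinely uniform in $\lambda$ near the boundary, especially for the nonconvex piece $Q_0$. I would handle it by comparing $\partial Q_0$, in geodesic normal coordinates of size $1$, with a hypersurface whose curvature is bounded by $2$. This gives a uniform Lipschitz chart, and \Cref{lip is cone} then supplies the cone, whose volume is comparable to the ball volume.
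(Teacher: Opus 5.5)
Your proposal is correct and follows the paper's route. The paper likewise notes that $\partial Q_0=\partial Q_1$ is a geodesic sphere of radius $(\lambda-a)^2$ with uniformly bounded second fundamental form, and then simply invokes \cite[Theorem~3.2]{wang1997global}. Your extra volume step only makes explicit the passage to the factor $(\sinh(2\sqrt t)-2\sqrt t)^{-1}$, which the paper leaves implicit: you bound $|B(x,\sqrt t)\cap Q_i|$ below at scales $\sqrt t\le 1$ and absorb the large-time loss via $2\sqrt t\le \delta t+\delta^{-1}$. The small-scale bound is also easier than your cone argument: a ball of radius $s/2$ placed along the radial geodesic through $x$, on the side away from $\partial Q_i$, already lies in $B(x,s)\cap Q_i$.
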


\begin{proof}
Note that if $\l>a$ is sufficiently large, then
$\p Q_0=\p Q_1$, which are geodesic spheres of radius $(\l-a)^2$.
Hence $\p Q_0$ has bounded mean curvature when $\lambda$ is large enough.
By \cite[Theorem~3.2]{wang1997global}, the result follows.
\end{proof}

For any Schr\"odinger operator $L$ on a domain $\Omega$ with Lipschitz boundary, denote by
\[
\N(\l,L_{\Omega,N})
\]
the eigenvalue counting function of $L$ on $\Omega$ with Neumann boundary condition.

By the Rayleigh quotient argument, 
\be\label{eigencounting bounded by Neuman eigencounting}
\N(\l)\leq 
\N(\l, H_{Q_0,N})
+\N(\l,H_{Q_1,N}).
\ee
Using \Cref{neumann heat kernel estimates} with $\delta=1$ together with \Cref{lem:spectral-bound-from-heat-kernel} and \eqref{volume}, one sees,
\be\ba\label{upper bound for Q1}
\N(\l,H_{Q_1,N})
&\leq \N(\l, \Delta_{Q_1,N})
\leq e\int_{Q_1} k_1(\l^{-1},x,x)\,dx \\
&\lesssim \l^{3/2}|Q_1|
\lesssim\l^{3/2}e^{2(\l-\l^{-c})^2}
\lesssim\l^{3/2}e^{\,2\l^2-4\l^{1-c}}.
\ea\ee
Combining \eqref{asymptotic of classical counting} and \eqref{upper bound for Q1}, we conclude that
\be\label{Neuman eigencouting on Q1 is small o}
\N(\l, H_{Q_1,N})
=o\Big(\int_{\bH^3} (\l-V)_+^{3/2}\,\dvol_{\bH^3}\Big), \quad\l\to\infty.
\ee
Consider the subregions
\[
c=\frac{5}{8},\quad Q_0^1=\Omega_{\l-\l^{-c}}\setminus \Omega_{\l-a},\qquad
Q_0^2=\Omega_{\l+\l^{-c}}\setminus\Omega_{\l-\l^{-c}},
\qquad
Q_0^3=\bH^3\setminus\Omega_{\l+\l^{-c}}.
\]
For an elliptic operator $L$, let $e_L$ (resp. $K_L$) denote the pointwise eigenvalue counting
function (resp. heat kernel) introduced in \Cref{pointwise eigenvalue}. Proceeding as in \eqref{upper bound for Q1},  we obtain
\be\ba\label{integral on Q01 is small o}
\int_{Q_0^1} e_{H_{Q_0,N}}(\l,x,x)
&\leq e\int_{Q_0^1}K_{H_{Q_0,N}}(\l^{-1},x,x)
   \leq e\int_{Q_0^1}k_{0}(\l^{-1},x,x)  \\
&\lesssim\l^{3/2}e^{\,2\l^2 - 4\l^{\,1-c}}
 =o\!\left(\int_{\bH^3} (\l - V)_+^{3/2}\,\dvol_{\bH^3}\right), \l\to\infty.
\ea\ee
We next use Agmon estimates to control
$\int_{Q_0^3} e_{H_{Q_0,N}}(x,x)\,dx$.

\def\AG{{\mathrm{AG}}}
\def\dist{{\mathrm{dist}}}

\begin{lem}[Agmon estimate]\label{Agmon estimate}
Let $(M,g)$ be a complete manifold and let $f\in C(M)$.
Assume there exists a compact set $K\subset M$ such that
$\inf_{M\setminus K} f>0$.
Suppose that $0\le u\in W^{1,2}(M)$ and that
$(\Delta+f)u\le0$ on $M\setminus K$ in the weak sense, i.e.,
\[
\int_{M\setminus K} \nabla u\cdot\nabla v + fuv\,\le0,
\qquad
\forall\, 0\le v\in C_c^\infty(M\setminus K).
\]
Let $g_{\AG}:= f\cdot g$ be the Agmon metric on $M\setminus K$, and
$\dist_{\AG}$ be the associated distance.
Set $\rho(x):=\dist_{\AG}(x,\partial K)$ and
$
K_1 := K \cup \{x\in M\setminus K : \rho(x)\le 2\}.
$
Then for any $\beta\in(0,1)$,
\[
\int_{M\setminus K_1} f\,|u|^2 e^{2\b\rho}\,\mathrm{dvol}
\le
\frac{8e^{4\b}(1+\b^2)}{(1-\b^2)^2}
\int_{K_1\setminus K} f\,|u|^2\,\mathrm{dvol}.
\]
\end{lem}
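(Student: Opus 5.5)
The plan is to run the classical Agmon argument with a conjugating exponential weight, using a cutoff to avoid the region near $K$. Introduce a Lipschitz weight $\phi:=\beta\min\{\rho,N\}$, truncated at level $N$ so that $e^{\phi}$ is bounded and $e^{2\phi}u\,\chi$ is an admissible test function; at the end let $N\to\infty$ by monotone convergence. Since $\rho$ is the distance for the conformal metric $f g$, it satisfies $|\nabla\rho|_g^2\le f$ a.e., hence $|\nabla\phi|^2\le\beta^2 f$. Next choose a cutoff $\chi$ built from $\rho$: $\chi=0$ for $\rho\le 1$, $\chi=1$ for $\rho\ge 2$, and $\chi$ linear in $\rho$ in between. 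This gives $|\nabla\chi|^2\le f$, with $\nabla\chi$ supported in $\{1\le\rho\le2\}\subset K_1\setminus K$. By density, $v=\chi^2e^{2\phi}u\ge0$ may be used in the weak inequality, since it is supported away from $K$.

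The key identity comes from setting $w:=\chi e^{\phi}u$. A direct expansion gives
\[
\int\nabla u\cdot\nabla(\chi^2e^{2\phi}u)=\int|\nabla w|^2-\int|\nabla(\chi e^{\phi})|^2u^2 .
\]
Combining this with the subsolution inequality yields
\[
\int f w^2\le\int|\nabla(\chi e^{\phi})|^2u^2 .
\]
Now expand $\nabla(\chi e^\phi)=e^\phi(\nabla\chi+\chi\nabla\phi)$. Use $(x+y)^2\le(1+\eta)x^2+(1+\eta^{-1})y^2$ together with $|\nabla\phi|^2\le\beta^2 f$ to get
\[
\int f\chi^2e^{2\phi}u^2\le (1+\eta^{-1})\int e^{2\phi}|\nabla\chi|^2u^2+(1+\eta)\beta^2\int f\chi^2 e^{2\phi}u^2 .
\]
Choose $\eta$ with $(1+\eta)\beta^2=\tfrac{1+\beta^2}{2}$, so the last term can be absorbed. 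The absorbed coefficient is $1-(1+\eta)\beta^2=\tfrac{1-\beta^2}{2}$, and $1+\eta^{-1}=\tfrac{1+\beta^2}{1-\beta^2}$. The absorption is legitimate because the truncation makes everything finite, as $u\in W^{1,2}$ and $f$ is bounded on the relevant bounded region. We should double-check that $f u^2$ is integrable there, or else further truncate $f$.

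On $\supp\nabla\chi$ we have $\rho\le2$, so $e^{2\phi}\le e^{4\beta}$, and $|\nabla\chi|^2\le f$ there. Together these bound the right side by
\[
e^{4\beta}\,\frac{1+\beta^2}{1-\beta^2}\int_{K_1\setminus K}f u^2 .
\]
On $M\setminus K_1$ we have $\chi=1$, so the left side dominates $\int_{M\setminus K_1}f u^2e^{2\beta\min(\rho,N)}$. This yields a constant of the form $\tfrac{2e^{4\beta}(1+\beta^2)}{(1-\beta^2)^2}$, which is at most the stated $\tfrac{8e^{4\beta}(1+\beta^2)}{(1-\beta^2)^2}$. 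Letting $N\to\infty$ finishes the argument.

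The main obstacle is justifying the test functions and the chain rule for a merely Lipschitz $\rho$ of a possibly degenerate continuous metric $f g$, specifically the a.e. bound $|\nabla\rho|^2\le f$. This follows from the $1$-Lipschitz property of $\rho$ with respect to $\dist_{\AG}$, since $f>0$ is continuous away from $K$. The approximation of $v$ by nonnegative $C_c^\infty(M\setminus K)$ functions also needs care; mollify and use $u\ge0$.
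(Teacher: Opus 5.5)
Your proposal is correct and is the standard Agmon weighted-energy argument that the paper invokes by citing \cite[Lemma~3.1]{DY2020cohomology} and \cite[Theorem~1.5]{agmon2014lectures}: test function $\chi^2e^{2\phi}u$ with truncated weight $\phi=\beta\min\{\rho,N\}$, the identity $\int\nabla u\cdot\nabla(h^2u)=\int|\nabla(hu)|^2-\int|\nabla h|^2u^2$, and absorption via $|\nabla\rho|^2\le f$. Your sharper constant ($2$ instead of $8$) implies the stated bound, and the finiteness you worried about is automatic: $\nabla\phi$ is supported in $\{\rho\le N\}$, which is relatively compact because $\inf_{M\setminus K}f>0$ and $M$ is complete, so no truncation of $f$ is needed.
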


\begin{proof}
The proof proceeds exactly as in \cite[Lemma~3.1]{DY2020cohomology} or
\cite[Theorem~1.5]{agmon2014lectures}.
\end{proof}

\begin{cor}\label{Cor Agmon}
Let $\psi$ be an eigenfunction of $H_{Q_0,N}$ with eigenvalue $\mu<\lambda$.
Then for any $\b\in(0,1)$, there exists a constant $C$ depending only on $\b$ such
that, for $\lambda$ sufficiently large,
\be\label{eq Cor Agmon}
\int_{\bH^3\setminus \Omega_{\lambda+\lambda^{-c}}} |\psi|^2
\le
C\,\lambda^{1+c}\, e^{-2\b\lambda^{1/16}}
\int_{\bH^3} |\psi|^2.
\ee
\end{cor}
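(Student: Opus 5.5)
We will apply \Cref{Agmon estimate} on $M=\bH^3$ with the compact set $K:=\overline{\Omega_{\l}}\cap \overline{Q_0}$ (more precisely, the closed shell $\overline{\Omega_\l\setminus\Omega_{\l-a}}$) and the function $f:=V-\mu$. The function $u:=|\psi|$ is admissible. Indeed, $\psi$ is a Neumann eigenfunction on $Q_0$, and on the complement of $K$ inside $Q_0$ we are away from $\partial Q_0$. Hence $(\Delta+V-\mu)\psi=0$ there in the classical sense, and Kato's inequality gives $(\Delta+f)|\psi|\le 0$ weakly on $\bH^3\setminus K$. (One extends $u$ by $0$ on $Q_1$; the test functions are supported away from $K$, which contains the shell adjacent to $\partial Q_0$, so no boundary terms appear.) Since $\mu<\l$, we have $f\ge V-\l>0$ on $\bH^3\setminus\overline{\Omega_\l}$. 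The trouble is that $f$ degenerates as $V\downarrow\l$. To handle this we shrink the region: we put $f:=V-\l$, which still satisfies $(\Delta+f)u\le (\Delta+V-\mu)u\le0$ because $u\ge0$ and $\mu<\l$. We then apply the lemma with $K:=\overline{\Omega_{\l+\l^{-c}/2}}\setminus\Omega_{\l-a}$ (compact), on whose complement $\inf f\ge \l^{-c}/2>0$.

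Next we bound the Agmon distance from below. Take $x\notin\Omega_{\l+\l^{-c}}$. Any path from $\partial K$ to $x$ must cross the shell $\Omega_{\l+\l^{-c}}\setminus\Omega_{\l+\l^{-c}/2}$. On this shell $f\ge \l^{-c}/2$. Its hyperbolic width is, by \eqref{gap} (with $r=V^2$), about $(\l+\l^{-c})^2-(\l+\l^{-c}/2)^2\approx \l^{1-c}$. The Agmon length element is $\sqrt f\,ds$, so
\[
\rho(x)\gtrsim \l^{-c/2}\,\l^{1-c}=\l^{1-3c/2}=\l^{1/16}
\]
for $c=5/8$. This is exactly the exponent in \eqref{eq Cor Agmon}. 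The conclusion of \Cref{Agmon estimate} then gives
\[
\int_{\bH^3\setminus\Omega_{\l+\l^{-c}}} f|\psi|^2 e^{2\b\rho}\le C_\b\int_{K_1\setminus K} f|\psi|^2 .
\]
On the left we use $f\ge \l^{-c}$ and $e^{2\b\rho}\ge e^{2\b c'\l^{1/16}}$; any constant $c'$ in front of $\l^{1/16}$ can be absorbed by adjusting $\b$, or else tracked precisely. On the right, $K_1\setminus K$ lies within Agmon distance $2$ of $K$, and there $f=V-\l$ is at most of order $\l$. The reason is that $V=r^{1/2}$ grows slowly: a region of bounded Agmon width in which $f\lesssim\l$ holds is reached before $V$ exceeds $2\l$, say. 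So the right side is at most $C\l\int|\psi|^2$. Combining the two sides yields the factor $\l^{1+c}e^{-2\b\l^{1/16}}$ claimed in \eqref{eq Cor Agmon}.

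The main obstacle is the careful bookkeeping of the lower bound for $\rho$ across the degenerate shell, and the justification of the subsolution property of $|\psi|$ near the Neumann boundary. For the boundary issue, the key point is that $\partial Q_0=\partial\Omega_{\l-a}$ lies inside $K$. For the distance bound, we will compute it explicitly in the radial variable, using $V=r^{1/2}$: the level $V=s$ is the sphere $r=s^2$.
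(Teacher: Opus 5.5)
Your argument is the paper's: Kato's inequality for $|\psi|$, the Agmon estimate with the boundary of $K$ at the level $V=\lambda+\lambda^{-c}/2$, and a lower bound $\rho\gtrsim \lambda^{-c/2}\cdot\lambda^{1-c}=\lambda^{1/16}$ across the shell $\Omega_{\lambda+\lambda^{-c}}\setminus\Omega_{\lambda+\lambda^{-c}/2}$. That bound also gives $K_1\subset\Omega_{\lambda+\lambda^{-c}}$, and you finish with $f\ge\lambda^{-c}$ on the left against $f\lesssim\lambda$ on the right. Using $f=V-\lambda$ instead of the paper's $f=V-\mu$ is a legitimate and slightly cleaner variant. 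The Agmon metric no longer depends on $\mu$. Moreover, since $K_1\setminus K\subset\Omega_{\lambda+\lambda^{-c}}$, you actually have $f\le\lambda^{-c}$ there, which removes the prefactor $\lambda^{1+c}$ altogether.

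Two corrections are needed.

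First, your final choice $K=\overline{\Omega_{\lambda+\lambda^{-c}/2}}\setminus\Omega_{\lambda-a}$ violates the hypothesis of the lemma. The set $\mathbb{H}^3\setminus K$ contains the ball $\Omega_{\lambda-a}=Q_1$, where $f=V-\lambda\le -a<0$. So $\inf_{M\setminus K}f>0$ fails, and $f\,g$ is not a metric there. Take the full closed ball $K=\overline{\Omega_{\lambda+\lambda^{-c}/2}}$, as the paper does. Then $\partial Q_0$ lies in the interior of $K$, so only the interior equation for $\psi$ is used, and the behaviour of $u$ on $Q_1$ becomes irrelevant. To have $u\in W^{1,2}(\mathbb{H}^3)$ literally, replace $|\psi|$ by $\chi|\psi|$, with $\chi$ vanishing near $\overline{\Omega_{\lambda-a}}$ and $\chi=1$ off $\Omega_\lambda$.

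Second, $\sqrt{f}\ge\sqrt{\lambda^{-c}/2}$ only gives $\rho\ge 2^{-1/2}\lambda^{1/16}$. Rescaling $\beta$ then yields the estimate only for $\beta<2^{-1/2}$, not for every $\beta\in(0,1)$ as stated, because the lemma needs $\beta<1$. To get the literal statement, move the boundary of $K$ inward and integrate radially. Take $K=\overline{\Omega_{\lambda+\lambda^{-c}/4}}$ and write $r=s^2$. Any path from $\partial K$ to $\mathbb{H}^3\setminus\Omega_{\lambda+\lambda^{-c}}$ then has Agmon length at least $\int_{\lambda+\lambda^{-c}/4}^{\lambda+\lambda^{-c}}2s\sqrt{s-\lambda}\,ds\ge\frac{4}{3}\bigl(1-\frac{1}{8}\bigr)\lambda^{1-3c/2}=\frac{7}{6}\lambda^{1/16}$. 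The paper's proof has the same slack: it writes $\lambda^{-c/2}$ where only $\sqrt{\lambda^{-c}/2}$ is available. The later use of the corollary on $Q_0^3$ only needs some exponential rate, so this point is cosmetic for the paper's purposes.
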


\begin{proof}
By Kato's inequality, we have
$(\Delta+V-\mu)|\psi|\le0$ weakly on $\bH^3\setminus \Omega_\lambda$.
Let $\dist_\mu$ denote the Agmon distance associated with the Agmon metric
\[
(V-\mu)\, g^{T\bH^3}
\qquad\text{on }\bH^3\setminus \Omega_\lambda .
\]
On $\bH^3\setminus \Omega_{\lambda+\lambda^{-c}/2}$, we have
$
V-\mu \ge V-\lambda \ge \frac{\lambda^{-c}}{2}.
$
Using \eqref{gap}, this implies
\be\label{Agmon distance lambda}
\dist_\mu\!\big(\Omega_{\lambda+\lambda^{-c}/2},
\bH^3\setminus \Omega_{\lambda+\lambda^{-c}}\big)
\ge
\lambda^{-\frac{c}{2}}
\dist\!\big(\Omega_{\lambda+\lambda^{-c}/2},
\bH^3\setminus \Omega_{\lambda+\lambda^{-c}}\big)
\ge
\lambda^{1/16}.
\ee
Let $\rho_\mu(x):=\dist_\mu(x,\Omega_{\lambda+\lambda^{-c}/2})$.
By \eqref{Agmon distance lambda}, if $\lambda\ge 2^{16}$, then
\[
\Omega_{\lambda+\lambda^{-c}}
\supset
\Omega_{\lambda+\lambda^{-c}/2}
\cup
\{x:\rho_\mu(x)\le2\}.
\]
Applying \Cref{Agmon estimate}, we find that for any $b\in(0,1)$, there exists
a constant $C$ depending only on $\b$ such that
\[
\int_{\bH^3\setminus \Omega_{\lambda+\lambda^{-c}}}
(V-\mu)|\psi|^2 e^{2\b\rho_\mu}
\le
C
\int_{\Omega_{\lambda+\lambda^{-c}}} (V-\mu)|\psi|^2 .
\]
Finally, if $x\in\bH^3\setminus \Omega_{\lambda+\lambda^{-c}}$, then
$V-\mu\ge \lambda^{-c}$ and by \eqref{Agmon distance lambda}, $\rho_\mu(x)\ge \lambda^{1/16}$, while if
$x\in\Omega_{\lambda+\lambda^{-c}}$, we have $V-\mu\le 2\lambda$.
The estimate \eqref{eq Cor Agmon} follows.
\end{proof}

By \eqref{eq Cor Agmon}, we deduce that,
\be\label{integral on Q_0^3 is small o}
\int_{Q_0^3}e_{H_{Q_0,N}}(\l,x,x)
=\int_{\bH^3\setminus\Omega_{\l+\l^{-c}}}e_{H_{Q_0,N}}(\l,x,x)
\lesssim\,\N(\l,H_{Q_0,N})
\l^{1+c}\,e^{-\l^{1/16}}.
\ee
We now estimate
$
\int_{Q_0^2} e_{H_{Q_0,N}}(\lambda,x,x).
$
This is the part where we apply the idea from the naive argument. Let
\[
b=\frac{5}{12}.
\]
We will shift the operator by a constant. Let $L_2$ denote $H-\lambda+\lambda^{-b}$ on $Q_0$, equipped with Neumann boundary conditions. Then one checks immediately that
\[
e_{H_{Q_0,N}}(\l,x,x)=e_{L_2}(\l^{-b},x,x).
\]
Since on $Q_0$ we have $V-\l+\l^{-b}\geq -a$, it follows the maximal principle that
\[
K_{L_2}(t,x,y)\leq e^{at}\,k_0(t,x,y).
\]
Therefore, by \Cref{lem:spectral-bound-from-heat-kernel},
\be\label{bounds integral of eigencouting on Q02 using heat kernel}
\int_{Q_0^2}e_{H_{Q_0,N}}(\l,x,x)
\leq e\int_{Q_0^2} e^{a\l^b} k_0(\l^b,x,x)\,dx.
\ee
We now estimate $k_0(\l^b,x,x)$ for $x\in Q_0^2$.

\begin{lem}\label{estimate of k0 in Q02}
There exists a $\l$-independent constant $C>0$ such that,
\[
k_0(\l^{b},x,x)\leq C\,e^{-\l^b}\,\l^{3b/2},\quad x\in Q_0^2,\l\gg1.
\]
\end{lem}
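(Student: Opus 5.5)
The idea is to compare $k_0(\l^b,x,x)$ with the heat kernel $k$ of $\bH^3$, given explicitly in \eqref{heat kernel on hyperbolic space}. On the diagonal that kernel equals $(4\pi t)^{-3/2}e^{-t}$. Throughout set $t:=\l^b$ with $b=\tfrac5{12}$. The key point is that $x\in Q_0^2$ lies very far from $\p Q_0$ compared with the diffusion length $\sqrt t$. Indeed, by \eqref{gap} applied between $\Omega_{\l-a}$ and the complement of $\Omega_{\l-\l^{-c}}$, we get $d(x,\p Q_0)\gtrsim (a-\l^{-c})\l\ge \l/4$ for $\l\gg1$. Since $\l^2/t=\l^{2-b}$ with $2-b=\tfrac{19}{12}>1>b$, Gaussian off-diagonal factors of the form $e^{-\l^2/(Ct)}$ will beat every competing exponential, namely $e^{C\l^b}$ and $e^{C\l}$.

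\textbf{Step 1 (cutoff and Duhamel).} Fix $x\in Q_0^2$ and let $\chi(y)=\phi\big(d(x,y)/\l\big)$, where $\phi\equiv1$ on $[0,\tfrac18]$ and $\phi\equiv0$ on $[\tfrac14,\infty)$. Then $\supp\chi\subset \mathring B_{\l/4}(x)$, which lies in the interior of $Q_0$. In $\bH^3$ the function $d(x,\cdot)$ is smooth off $x$ and $\Delta d=-2\coth d$ is bounded away from $x$, so $|\nabla\chi|+|\Delta\chi|\lesssim \l^{-1}$. The function $\chi(\cdot)k(\cdot,x,\cdot)$ satisfies the Neumann condition trivially. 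Duhamel's principle therefore gives
\[
k_0(t,x,x)-k(t,x,x)=\int_0^t\!\!\int_{Q_0} k_0(t-s,x,z)\Big(2\nabla\chi\cdot\nabla_z k(s,z,x)+\Delta\chi(z)\,k(s,z,x)\Big)\,dz\,ds,
\]
where the integrand is supported in the shell $A:=\{\l/8\le d(x,z)\le\l/4\}$.

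\textbf{Step 2 (bounding the error).} For $k_0(t-s,x,z)$ I will use \Cref{neumann heat kernel estimates} with $\delta=1$. This gives
\[
k_0(\tau,x,z)\lesssim(\sinh 2\sqrt\tau-2\sqrt\tau)^{-1}e^{-d^2/(5\tau)}e^{C_2\tau}.
\]
The first factor is $\lesssim \tau^{-3/2}$ for small $\tau$ and bounded for $\tau\ge1$. For $k(s,z,x)$ and $\nabla_z k$ I will use \eqref{heat kernel on hyperbolic space}, which gives $|\nabla_z k|+|k|\lesssim (1+s^{-5/2})\,d\,e^{-d^2/(4s)}$. On $A$ we have $d\ge\l/8$, so both Gaussian factors are at most $e^{-\l^2/(320\,t)}$, and the same bound absorbs the negative powers of $s$ and $t-s$ as $s\to0$ or $s\to t$. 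The volume of $A$ is $\lesssim e^{\l/2}$. Altogether the error is bounded by
\[
\l\, t\, e^{\l/2}e^{C_2t}e^{-\l^{2-b}/320}\;\lesssim\;e^{-2\l^b}\qquad(\l\gg1),
\]
because $2-b$ exceeds both $1$ and $b$.

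\textbf{Step 3 (conclusion).} Combining the two steps,
\[
k_0(\l^b,x,x)\le(4\pi\l^b)^{-3/2}e^{-\l^b}+Ce^{-2\l^b}\le C e^{-\l^b}\l^{3b/2}.
\]
The stated bound $\l^{3b/2}$ is in fact generous, since even $\l^{-3b/2}$ would suffice. The constants are uniform in $x\in Q_0^2$ because they enter only through $d(x,\p Q_0)\ge\l/4$ and \Cref{neumann heat kernel estimates}.

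The main obstacle is Step 2. One has to make sure that the only uniform bound available for the Neumann kernel on the non-compact domain $Q_0$, which loses a factor $e^{C_2 t}$ at large time, is still beaten by the Gaussian separation. The choice of $b<1<2-b$ is what guarantees this. A secondary technical point is to localize $\chi$ near $x$, rather than using a cutoff adapted to $\p Q_0$, so that the shell volume is only $e^{O(\l)}$ and not $e^{O(\l^2)}$.
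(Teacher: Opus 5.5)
Your proof is correct and follows essentially the paper's route. Both arguments use a cutoff times the explicit $\bH^3$ heat kernel as a parametrix, apply Duhamel's principle with the Gaussian bound of \Cref{neumann heat kernel estimates} for $k_0$ (whose $e^{C_2 t}$ loss is harmless), and rely on $\l^{2-b}$ dominating both $\l^b$ and $\l$. The only difference is the cutoff: the paper uses an $x$-independent radial cutoff $\eta$ that vanishes near $\p Q_0$. It handles the large transition annulus $Q_0'$ by keeping the off-diagonal decay $e^{-c'd^{2-b}(x,y)}$ in the integrand and integrating over $y$, which converges against the $e^{2d}$ volume growth since $2-b>1$. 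So your localization to $B_{\l/4}(x)$ is a convenient simplification, not a necessity as your closing remark suggests.
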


\begin{proof}
Choose $\eta\in C^\infty(\bH^3\setminus\Omega_{\l-a})$ such that \def\tk{{\tilde{k}}}
\[
\eta|_{\Omega_{\l-a/2}\setminus\Omega_{\l-a}}\equiv 0,\qquad
\eta|_{\bH^3\setminus\Omega_{\l-a/4}}\equiv 1,\qquad
0\leq \eta\leq 1.
\]
Set $Q_0':=\Omega_{\l-a/4}\setminus \Omega_{\l-a/2}$ and let
\[
\tk_0(t,y,z)=\eta(z)\,k(t,y,z),\qquad y,z\in Q_0,
\]
where $k$ is the heat kernel on $\bH^3$ (see \eqref{heat kernel on hyperbolic space}).  
Then $(\p_t+\Delta)\tk_0$ is supported in $Q_0\times Q_0'$, with $\Delta$ acting in the second factor.

Note that for large $\l$, 
$
\mathrm{dist}(Q_0^2,Q_0')\geq \frac{a\l}{4}.
$
By \eqref{heat kernel on hyperbolic space}, for some $\l$-independent $c'>0$,
\be\label{estimate of remainder}
|(\p_t+\Delta)\tk_0(t,x,y)|
\leq e^{-t}e^{-c'd^{2-b}(x,y)},\quad x\in Q_0^2, y\in Q_0', t\in(0,\l^b).
\ee
Similarly, by \Cref{neumann heat kernel estimates}, we may as well assume that for the same $c'>0$,
\be\label{estimate of k0 when x and y are far away}
|k_0(t,x,y)|\leq e^{-t}e^{-c'd^{2-b}(x,y)}, \quad x\in Q_0^2, y\in Q_0', t\in(0,\l^b).
\ee
By Duhamel’s principle, \eqref{estimate of k0 when x and y are far away} and \eqref{estimate of remainder}, there exists $C>0$ such that for $x\in Q_0^2,$
\[\ba
|k_0(\l^b,x,x)-\tk_0(\l^b,x,x)|&\leq\int_0^{\l^b}\int_{Q_0'}|(\p_t+\Delta)\tk_0(s,x,y)||k_0(t-s,x,y)|dyds\\
&\leq  \l^be^{-\l^b} \int_{\bH^3}e^{-c'd^{2-b}(x,y)}dy \leq C \l^be^{-\l^b}
\ea\]
Note  also that by the construction of $\tk$ and \eqref{heat kernel on hyperbolic space}, we have $\tk_0(\l^b,x,x)=k(\l^b,x,x)\leq C\l^{3b/2}e^{-\l^b},$ the result follows.
\end{proof}

Using \Cref{estimate of k0 in Q02}, \eqref{volume} and the inequalities  $3/8=1-c<b=5/12$,
\be\ba\label{integral of heat kernels bounds on Q02}
\int_{Q_0^2} e^{a\l^b} k_0(\l^b,x,x)
&\lesssim\l^{3b/2} e^{-a\l^b}\,|Q_0^2|
   \lesssim\l^{3b/2} e^{2\l^2 + 2\l^{1-c} - a\l^b} \lesssim\l^{3b/2} e^{2\l^2 - \frac{a}{2}\l^b}.
\ea\ee
Hence, by \eqref{bounds integral of eigencouting on Q02 using heat kernel}, \eqref{integral of heat kernels bounds on Q02}, and \eqref{asymptotic of classical counting},
\be\label{integral on Q02 is small o}
\int_{Q_0^2} e_{L_1}(\l,x,x)
=
o\!\left(\int_{\bH^3} (\l - V)_+^{3/2}\,\dvol_{\bH^3}\right),\l\to\infty.
\ee
From \eqref{integral on Q_0^3 is small o}, for $\l\gg1$ we have
$
\N(\l,H_{Q_0,N})
\leq 
2\int_{Q_0^1\cup Q_0^2} e_{H_{Q_0,N}}(\l,x,x)\,dx.
$
Therefore, by \eqref{integral on Q01 is small o} and \eqref{integral on Q02 is small o},
\be\label{Neuman eigencouting on Q0 is small o}
\N(\l,H_{Q_0,N})
=
o\!\left(\int_{\bH^3}(\l - V)_+^{3/2}\,\dvol_{\bH^3}\right), \l\to\infty.
\ee
Finally, \Cref{Weyl law fails} follows from 
\eqref{eigencounting bounded by Neuman eigencounting}, 
\eqref{Neuman eigencouting on Q1 is small o}, 
and \eqref{Neuman eigencouting on Q0 is small o}.

\section{Examples Satisfying \eqref{assumption}}\label{examples satisfy assumptions}

\subsection{Slow-growing potentials}
In this subsection, we verify that the potentials
\[\ba
V(x)&=\ln\!\cdots\!\ln(|x|)\;,|x|\gg1\ \text{on }\mathbb{R}^{n},\qquad\\
V(x,\theta)&=\ln(|x|)^\a,\quad |x|\gg1,\ \a>1,\ (x,\theta)\in \mathbb{R}\times S^1,\\
V(x)&=r(x)^{\a}\;,\a>1\ \text{on hyperbolic space},
\ea\]
where $r(x)$ denotes the hyperbolic distance from $x$ to $0$, satisfy \eqref{assumption}.

For $V(x)=\ln(|x|)\;(|x|\geq1)$ on $\mathbb{R}^{n}$, one easily checks
\[
a(\lambda)\approx 1,\quad R_{\delta}(\lambda)=S_\delta(\l)=T_\delta(\l)=0,\quad b_{\delta}(\lambda)\approx e^{\delta^{2}\lambda/n}.
\]
Thus \eqref{assumption} holds. We can verify similarly that for $ V(x)=\ln(|x|)^\a,\ \a>0,\ |x|\gg1 $ or $ V(x)=\ln\!\cdots\!\ln(|x|),\ |x|\gg1 $, condition \eqref{assumption} holds.
\begin{rem}\label{compare R and R times S1}
    In contrast, the example in \Cref{R times s1}, on the product space $\mathbb{R}\times S^{1}$, fails to satisfy \eqref{assumption} because the upper bound on $b_{\delta}(\lambda)$ is constrained by injectivity-radius bounds and cannot admit exponential growth.
\end{rem} 

For
$
V(x,\theta)=\ln(|x|)^{\a}, |x|\gg1,\ \a>1,\ (x,\theta)\in \R\times S^1,
$
the validity of \eqref{assumption} follows from
\[
a(\lambda)\approx \lambda^{\frac{\a-1}{\a}},\qquad
R_{\delta}(\lambda)=S_\delta(\l)=T_\delta(\l)=0,\qquad
b_{\delta}(\lambda)\approx 2\pi.
\]
For
$
V(x)=r(x)^{\a}, \a>1,
$
on hyperbolic space, the validity of \eqref{assumption} is ensured by
\[
a(\lambda)\approx \lambda^{\frac{\a-1}{\a}},\qquad
R_{\delta}(\lambda)=1,\quad
S_\delta(\l)=T_\delta(\l)=0,\qquad
b_{\delta}(\lambda)\approx \delta^{2}.
\]

\subsection{Potentials satisfying doubling condition}\label{example in DY}

In the case of the potential satisfying the doubling condition \eqref{doubling condition0}, \Cref{main} recover the result of \cite{WeylDY}.
In \cite{WeylDY}, the discussion is limited to manifold with bounded geometry:
\begin{defn}[Bounded geometry]\label{defn bounded geometry}
    Let $(M,g)$ be a complete Riemannian manifold with metric $g$. We say $(M,g)$ has \emph{bounded geometry} if the following conditions hold:
    \begin{enumerate}[(1)]
        \item The injectivity radius $\tau$ of $(M,g)$ is uniformly bounded below by a positive constant.
        \item The norm of the curvature operator, as well as the norms of its first two
covariant derivatives, are uniformly bounded.
    \end{enumerate}
\end{defn}

In \cite{WeylDY}, the following function spaces are considered.

\begin{defn} \label{Def-a-reg}
Let $V \in L^\infty_{\mathrm{loc}}(M)$.
 For some $\b\in[0,\half]$, we say $V$ is $\b$-regular if there exists a decreasing continuous function $v: \R 
\mapsto (0,\infty)$ with $\lim_{t\to\infty}v(t)=0$, such that for any $x,y\in M$, whenever $d(x,y)<\tau$, we have 
		\begin{equation}\label{secondcond}
					|V(x)-V(y)|\leq d(x,y)^{2\b}\max\{|V(x)|^{1+\b},1\}v\big(V(x)\big).
				\end{equation}
This can be thought of as a quantified H\"older continuity condition for $V$.

We consider
\be\label{defn-Ra}
\mR_\b:=\left\{V\in L_{\mathrm{loc}}^\infty(M):\text{ $V$ satisfies \eqref{cond-V-2}, \eqref{secondcond}, and the doubling condition \eqref{doubling condition0} }\right\}.
\ee
\end{defn}
The function space $\mR_\b$ was also studied in \cite{rozenbljum1974asymptotics}, but the manifold is taken  to be $\R^n$.

If $V\in \mR_\b,\ \b\in[0,\half]$, one can verify easily that $a(\l)\approx \l$, $R_\delta(\l), S_\delta(\l),T_\delta(\l)\lesssim 1$, and $b_\delta(\l)\approx \l^{-\half}\left(\frac{\delta^2}{\nu(\l)}\right)^{\frac{1}{2\beta}}$ for some constant $c>1$, hence \eqref{assumption} holds for such $V$.

Another example of potentials considered in \cite{WeylDY} is:
For $\a \in [0, \tfrac{1}{2})$, let $\mS_\a$ be the class of functions satisfying the same conditions as $\mathcal{R}_\a$, except that \eqref{secondcond} is replaced by:
\begin{equation}\label{smooth-a}
V \in \mathrm{Lip}(M) \quad \text{and} \quad |\nabla V(x)| \leq C_V'' \max\{1, V(x)\}^{1+\a} \quad \text{a.e.},
\end{equation}
for some constant $C_V'' > 1$. Here $\mathrm{Lip}(M)$ denotes the space of Lipschitz functions on $M$.

The space ${\mS}_\a$, $\a \in [0, \tfrac{1}{2})$, was also considered by Tachizawa \cite[Theorem 4.3]{tachizawa1992eigenvalue} and Feigin \cite{feigin1976asymptotic}, where they limit their discussion to $\R^n$. It can be checked easily that for $V\in \mS_\a$,
$a(\l)\approx \l$, $R_\delta(\l), S_\delta(\l),T_\delta(\l)\lesssim 1$, and $b_\delta(\l)\approx \delta^2\l^{-\beta}$ for any $\beta\in(\a,\half)$ using arguments similar to those in \cite[$\S$ A.4]{WeylDY}, which imply \cite[Theorem 1.10]{WeylDY} in the case of $V\in \mS_\a$.

We can also consider the following space.
Let $\widetilde{\mathcal{R}}_0$ be the space of functions satisfying the same conditions as $\mathcal{R}_\a$, except that \eqref{secondcond} is replaced by the following: there exists an increasing function $\eta \in C([0, \tau))$ with $\eta(0) = 0$ such that for almost every $d(x,y)<\tau$,
\begin{equation}\label{smooth-0}
|V(x) - V(y)| \leq \eta\big(d(x, y)\big) \max\{1, |V(x)|\}.
\end{equation}
The space $\widetilde{\mR}_0$ was considered by Fleckinger \cite{fleckinger1981estimate}, and the discussion in that paper is still limited to $\mathbb{R}^n$. We then have $a(\lambda)\approx \lambda$, $R_\delta(\l), S_\delta(\l),T_\delta(\l)\lesssim 1$, and $b_{\delta}(\lambda)\approx 1$, thus \eqref{assumption} holds. Consequently, our \Cref{main} implies \cite[Theorem 1.10]{WeylDY} in this case.

It is also possible to extend the analysis to potentials with weaker regularity, 
following the approach in \cite{WeylDY} where an integral oscillation condition (such as the function space $\mathcal{O}_\b$) is used, but the argument in this paper would not be as concise as showed here.
\def\newexample{1}
\if\newexample0
\subsection{More examples}

We will introduce more examples of potentials satisfying \eqref{assumption} that is closely related to examples in \cref{example in DY}.

Let $a:[1,\infty)\to(0,\infty)$ be a monotone function such that $a(\l)\lesssim \l$. We will introduce a class of function that is closely related to $\mR_\b$ in \cref{example in DY}.

\begin{defn} \label{Def-a-reg}
Let $V \in L^\infty_{\mathrm{loc}}(M)$.
 For some $\b\in[0,\half]$, we say $V$ is $(a,\b)$-regular if there exists a decreasing continuous function $v: \R 
\mapsto (0,\infty)$ with $\lim_{t\to\infty}v(t)=0$ such that
\begin{itemize}

   \item If $\lim_{\l\to\infty}a(\l)=\infty$, for any $x,y\in M$, whenever $d(x,y)<\tau$ such that $V(x)\geq1$, we have 
		\begin{equation}\label{secondcond}
					|V(x)-V(y)|\leq d(x,y)^{2\b}a^{1+\b}\big(V(x)\big)v\big(V(x)\big).
				\end{equation}
         \item If $\lim_{\l\to\infty}a(\l)\neq\infty$, for any $x,y\in M$ such that $V(x)\geq 1$, we have 
		\begin{equation}\label{secondcond}
					|V(x)-V(y)|\leq d(x,y)^{2\b}a^{1+\b}\big(V(x)\big)v\big(V(x)\big).
				\end{equation}
                Moreover, when $V(x)$ is large enough, the injectivity radius at $x$ is bounded below by $a^{-\half}\big(V(x)\big) v^{-\frac{1}{2\beta}}\big(V(x)\big).$
\end{itemize}

We consider
\be\label{defn-Ra}
\mR_\b(a):=\left\{V\in L_{\mathrm{loc}}^\infty(M):\text{ $V$ satisfies \eqref{cond-V-2}, \eqref{secondcond}, and \eqref{new defn of a} }\right\}.
\ee

Then one can check that for $V\in \mR_\b(a)$, $a(\l)=a(\l)$, $b_\delta(\l)\approx\delta^2a^{-\half}\big(V(x)\big) v^{-\frac{1}{2\beta}}\big(V(x)\big)$ 
\end{defn}
\fi

\appendix
\section{Proof of \Cref{tesslation} }\label{appendix}

 Throughout the proof, all constants depend only on $n,R$ and $r_0$, unless stated explicitly. 

  Item (1) and item (2) follows easily from our construction and volume comparison. 

 Next, we prove item (3). Fix $j$ and $x_* \in \partial \tQ_j$. Set $f_{lj}(x) := d(x, x_l) - d(x, x_j),l\neq j$ and $f_{jj}(x):= d(x, x_j) - r_0$. Then $\partial \tQ_j$ can be expressed as the intersection of finitely many level sets of the form $\{f_{lj} = 0\}$. Set
\[
J(x_*) := \bigl\{ l \in \{1, 2, \dots, J\} : f_{lj}(x_*) = 0 \bigr\}.
\]
Then $J(x_*)$ contains at most $N$ elements, where $N$ is the constant appearing
in \Cref{Vitali covering0}.

One suffices to deal with $l \in J(x_*)\setminus\{j\}$. For such $l$,
\begin{equation}\label{tess eq1}
\nabla f_{lj}(x_*) = u_l - u_j,
\end{equation}
where $u_l, u_j \in T_{x_*}X$ satisfy $\exp_{x_*}(u_l) = x_l$ and $\exp_{x_*}(u_j) = x_j$.

Since $5^{-1}r_0\leq d(x_*, x_j) = d(x_*, x_l) \leq r_0$ and $5^{-1}r_0\leq d(x_j, x_l)$, it follows from Toponogov comparison that the angle between $u_j$ and $u_l$ is bigger than $\theta_0$  for some constant $\theta_0\in(0,\pi].$

Let $\nu = \dfrac{u_j}{\sqrt{g(u_j, u_j)}}$. Note that $u_j$ and $u_l$ have the same length, and we obtain
\begin{equation}\label{nonvanishing derivative}
\left|g\bigl( \nabla f_{l,j}(x_*), \nu \bigr)\right|=\big(1-\cos(\theta_0)\big)|u_l| \geq 5^{-1}r_0\big(1-\cos(\theta_0)\big)>0.
\end{equation}
We identify $T_{x_*}X$ with $\mathbb{R}^n$ so that $\nu$ corresponds to $e_n:=(0, \dots, 0, 1)$. Let $\varphi$ be the inverse of the exponential map at $x_*$.  By \cite[Corollary~6.6.1]{jost2005riemannian}, the chart $\varphi$ satisfies
\eqref{Lip varphi0} with some constant $L_1$, for all radii $r_1<r_0$.

By \eqref{nonvanishing derivative} and the implicit function theorem, there exists a neighborhood $U$ of $x_*$ and a smooth function $\psi_l \in C^{\infty}(\mathbb{R}^{n-1})$ such that the graph of $\psi_l$ in $\varphi(U)$ gives $\{ f_{lj} \circ \varphi^{-1} = 0 \}$.

To obtain the uniform bounds $ L_2, r_1$, note that by the Taylor expansion
\begin{equation}\label{Taylor gradient}
\Bigl| \p_{y_n}(f_{lj} \circ \varphi^{-1})(y) \Bigr|
= \Bigl| \p_{y_n}(f_{lj} \circ \varphi^{-1})(0) + R_{lj}(y) \Bigr|
\geq 5^{-1}r_0\bigl(1 - \cos(\theta_0)\bigr) - |R_{lj}(y)|.
\end{equation}
By the Hessian comparison theorem \cite[Theorem~6.6.1]{jost2005riemannian},
we have if $5^{-1}r_0<d(x,x_j),d(x,x_l)< 2r_0$, then for some constant $C>0$,
\be\label{Hess flj}
|\Hess\, f_{l,j}(x)|
\le |\Hess\, d(x,x_l)|+|\Hess\, d(x,x_j)|
\le C.
\ee
 Thus, Taylor's theorem yields
$
|R_{lj}(y)|\le C'|y|
$
for some $C'$. Hence, we can see that
if $|y|<r_1$ for some sufficiently small $r_1$, the right-hand side of \eqref{Taylor gradient} is large than
$
10^{-1}r_0\bigl(1-\cos(\theta_0)\bigr).
$

By the implicit function theorem, the Lipchitz norm of $\psi_l$ is bounded by a constant $L_2 > 0$. Set $\psi(y') :=\min_{l}\{ \psi_{l}(y') \}.$ Then $\psi$ is a Lipchitz function whose graph gives $\partial \tQ_j \cap \varphi(U)$, and the Lipchitz norm of $\psi$ is also bounded by $L_2$. This completes the proof of item (3). 

Next, we address item (4). Set $\Sigma_{jl} := B_{r_0}(x_j) \cap \{ f_{jl} = 0 \}$. By \eqref{Hess flj}, we have
$
|{\rm Hess} f_{lj}| \Big|_{\Sigma_{jl}} \leq C.
$
Thus, the sectional curvature of $\Sigma_{jl}$ is bounded by a constant; consequently, by volume comparison, the area of $\Sigma_{jl}$ is bounded  by some constant times $r_0^{n-1}$. Similarly, there exists $\epsilon_0 > 0$ such that whenever $\epsilon < \epsilon_0$, the volume of the $\epsilon$-neighborhood  of $\Sigma_{jl}$ is bounded by $C' \epsilon r_0^{n-1}$ for some constant $C'$. Since $\tQ_j^\epsilon$ is contained in at most $N$ such $\epsilon$-neighborhoods, we obtain item (4).

Lastly, item (5) follows from \Cref{Varo}, \Cref{Cone implies SObolev} and item (3).
		\bibliography{lib}

@article{bonthonneau2015weyl,
  title={Weyl laws for manifolds with hyperbolic cusps},
  author={Bonthonneau, Y.},
  journal={arXiv preprint arXiv:1512.05794},
}

@article{boulkhemair2007uniform,
  title={On the uniform {P}oincar{\'e} inequality},
  author={Boulkhemair, A. and Chakib, A.},
  journal={Communications in Partial Differential Equations},
  volume={32},
  number={9},
  pages={1439--1447},
  year={2007},
  publisher={Taylor \& Francis}
}

@article{sturm1996analysis,
  title={Analysis on local {D}irichlet spaces. {III}. The parabolic {H}arnack inequality},
  author={Sturm, K.-T.},
  journal={J. Math. Pures Appl.},
  year={1996},
page={273–297},
volume={75},
number={3}
}

@article{sturm1995analysis,
  title={Analysis on local {D}irichlet spaces. {II}. Upper {G}aussian estimates for the fundamental solutions of parabolic equations},
  author={Sturm, K.-T.},
  year={1995},
page={275-312},
journal={Osaka J. Math.},
volume={32},
number={2}
}

@book{fanghua2003geometric,
  title={Geometric measure theory. {A}n introduction},
  author={Lin, F. and Yang, X.},
  year={2003},
  publisher={International Press}
}

@article{chitour2024weyl,
  title={Weyl's law for singular {R}iemannian manifolds},
  author={Chitour, Y. and Prandi, D. and Rizzi, L.},
  journal={Journal de Math{\'e}matiques Pures et Appliqu{\'e}es},
  volume={181},
  pages={113--151},
  year={2024},
  publisher={Elsevier}
}

@article{fleckinger1981estimate,
  title={Estimate of the number of eigenvalues for an operator of {S}chr{\"o}dinger type},
  author={Fleckinger, J.},
  journal={Proceedings of the Royal Society of Edinburgh},
  volume={89},
  number={3-4},
  pages={355--361},
  year={1981},
  publisher={Royal Society of Edinburgh Scotland Foundation}
}

@article{feigin1976asymptotic,
  title={Asymptotic distribution of eigenvalues for hypoelliptic systems in {$\mathbb{R}^n$}},
  author={Feigin, V. I.},
  journal={Matematicheskii Sbornik},
  volume={141},
  number={4},
  pages={594--614},
  year={1976},
  publisher={Russian Academy of Sciences, Steklov Mathematical Institute of Russian~}
}

@article{MP-WEYL,
  title={Eigenvalue asymptotics for a class of md-elliptic $\psi$do’s on manifolds with cylindrical exits},
  author={ Maniccia, L. and Panarese, P.},
  journal={Annali di Matematica Pura ed Applicata},
  volume={181},
  number={3},
  pages={1618--1891},
  year={2002},
}

@article{moroianu2008weyl,
  title={Weyl laws on open manifolds},
  author={Moroianu, S.},
  journal={Mathematische Annalen},
  volume={340},
  number={1},
  pages={1--21},
  year={2008},
  publisher={Springer}
}

@article{levendorskiui1996spectral,
  title={Spectral asymptotics with a remainder estimate for {S}chr{\"o}dinger operators with slowly growing potentials},
  author={Levendorski{\u{i}}, S.Z.},
  journal={Proceedings of the Royal Society of Edinburgh},
  volume={126},
  number={4},
  pages={829--836},
  year={1996},
  publisher={Royal Society of Edinburgh Scotland Foundation}
}

@article{coriasco2021weyl,
  title={Weyl law on asymptotically {E}uclidean manifolds},
  author={Coriasco, S. and Doll, M.},
  journal={Annales Henri Poincar{\'e}},
  volume={22},
  pages={447--486},
  year={2021},
}

@article{rofe1970conditions,
  title={Conditions for the self-adjointness of the {S}chr\"odinger operator},
  author={Rofe-Beketov, F. S.},
  journal={Mathematical notes of the Academy of Sciences of the USSR},
  volume={8},
  number={6},
  pages={888--894},
  year={1970},
  publisher={Springer}
}

@article{oleinik1994connection,
  title={On a connection between classical and quantum-mechanical completeness of the potential at infinity on a complete {R}iemannian manifold},
  author={Oleinik, I.},
  journal={Math. Notes.},
  volume={55},
  number={4},
  year={1994},
pages={380-386}
}

@article{canzani2013analysis,
  title={Analysis on manifolds via the {L}aplacian},
  author={Canzani, Y.},
  journal={Lecture Notes available at: http://www. math. harvard. edu/canzani/docs/Laplacian. pdf},
  year={2013}
}

@article{tachizawa1992eigenvalue,
  title={Eigenvalue asymptotics of {S}chr{\"o}dinger operators with only discrete spectrum},
  author={Tachizawa, K.},
  journal={Publications of the Research Institute for Mathematical Sciences},
  volume={28},
  number={6},
  pages={943--981},
  year={1992},
  publisher={Research Institute forMathematical Sciences}
}

@article{rozenbljum1974asymptotics,
  title={{A}symptotics of the eigenvalues of the {S}chr{\"o}dinger operator},
  author={{R}ozenbljum, {G. V.}},
  journal={{M}athematics of the {USSR-S}bornik},
  volume={22},
  number={3},
  pages={349},
  year={1974},
  publisher={{IOP P}ublishing}
}

@book{agmon2014lectures,
  title={Lectures on exponential decay of solutions of second-order elliptic equations: {B}ounds on eigenfunctions of {N}-body {Schr{\"o}dinger} operations.(MN-29)},
  author={Agmon, S.},
  year={2014},
  publisher={Princeton University Press}
}

@article{DY2020cohomology,
  title={Witten deformation for noncompact manifolds with bounded geometry},
  author={Dai, X. and Yan, J.},
  journal={Journal of the Institute of Mathematics of Jussieu},
  volume={22},
  number={2},
  pages={643--680},
  year={2023},
  publisher={Cambridge University Press}
}

@article{braverman2025semi,
  title={The semi-classical {W}eyl law on complete manifolds},
  author={Braverman, M.},
  journal={arXiv preprint arXiv:2505.12157},
}

@article{WeylDY,
  title={Weyl Law for {S}chr\"odinger Operators on Noncompact Manifolds, Heat Kernel, and {Karamata-Hardy-Littlewood} Theorem},
  author={Dai, X. and Yan, J.},
  journal={arXiv preprint arXiv: 2504.15551},
}

@book{grigoryan2009heat,
  title={Heat kernel and analysis on manifolds},
  author={Grigoryan, A.},
  volume={47},
  year={2009},
  publisher={American Mathematical Soc.}
}

@article{grigor1994heat,
  title={Heat kernel upper bounds on a complete non-compact manifold},
  author={Grigor'yan, A.},
  journal={Revista Matem{\'a}tica Iberoamericana},
  volume={10},
  number={2},
  pages={395--452},
  year={1994},
  publisher={Madrid: Revista Matematica Iberoamericana}
}

@article{varopoulos1985hardy,
  title={Hardy-{L}ittlewood theory for semigroups},
  author={Varopoulos, N. Th.},
  journal={Journal of functional analysis},
  volume={63},
  number={2},
  pages={240--260},
  year={1985},
  publisher={Academic Press}
}

@article{grigor1997gaussian,
  title={Gaussian upper bounds for the heat kernel on arbitrary manifolds},
  author={Grigor’yan, A.},
  journal={J. Diff. Geom},
  volume={45},
  number={1},
  pages={33--52},
  year={1997}
}

@book{adams2003sobolev,
  title={Sobolev spaces},
  author={Adams, R. A.  and Fournier, J. },
  volume={140},
  year={2003},
  publisher={Elsevier}
}

@book{jost2005riemannian,
  title={Riemannian geometry and geometric analysis},
  author={Jost, J.},
  year={2005},
  publisher={Springer}
}

@article{wang1997global,
  title={Global heat kernel estimates},
  author={Wang, J.},
  journal={Pacific Journal of Mathematics},
  volume={178},
  number={2},
  pages={377--398},
  year={1997},
  publisher={Mathematical Sciences Publishers}
}

@article{weyl1911asymptotische,
  title={{\"U}ber die asymptotische Verteilung der Eigenwerte},
  author={Weyl, H.},
  journal={Nachrichten von der Gesellschaft der Wissenschaften zu G{\"o}ttingen, Mathematisch-Physikalische Klasse},
  volume={1911},
  pages={110--117},
  year={1911}
}

@article{inahama2004eigenvalues,
  title={Eigenvalue asymptotics for the {S}chr{\"o}dinger operators on the real and the complex hyperbolic spaces},
  author={Inahama, Y. and Shirai, S.-I.},
  journal={Journal de Math{\'e}matiques Pures et Appliqu{\'e}es},
  volume={83},
  number={5},
  pages={589--627},
  year={2004},
  publisher={Elsevier}
}

@article{inahama2004eigenvalue,
  title={Eigenvalue asymptotics for the {S}chr{\"o}dinger operators on the hyperbolic plane},
  author={Inahama, Y. and Shirai, S.-I.},
  journal={Journal of Functional Analysis},
  volume={211},
  number={2},
  pages={424--456},
  year={2004},
  publisher={Elsevier}
}

@article{de1950asymptotic,
  title={On the asymptotic distribution of eigenvalues},
  author={De Wet, J. S. and Mandl, F.},
  journal={Proceedings of the Royal Society of London. },
  volume={200},
  number={1063},
  pages={572--580},
  year={1950},
  publisher={The Royal Society London}
}
		\bibliographystyle{plain}
	\end{document}